\newtheorem{thm}{Theorem}[section]
\newtheorem{prop}[thm]{Proposition}
\newtheorem{lem}[thm]{Lemma}
\theoremstyle{definition}
\newtheorem{exam}[thm]{Example}
\newtheorem{defi}[thm]{Definition}
\newtheorem{rem}[thm]{Remark}
\newtheorem{conj}[thm]{Conjecture}
\begin{document}

\author{Morimichi Kawasaki}

\address[Morimichi Kawasaki]{Graduate School of Mathematical Sciences, the University of Tokyo, 3-8-1 Komaba, Meguro-ku, Tokyo 153-0041, Japan and Center for Geometry and Physics, Institute for Basic Science (IBS), Pohang 37673, Republic of Korea}
\email{kawasaki@ibs.re.kr}
\title{Heavy subsets and non-contractible trajectories}
\maketitle
\begin{abstract}
Biran, Polterovich and Salamon defined a relative symplectic capacity which indicates the existence of 1-periodic non-contractible closed trajectories of Hamiltonian isotopies.
Many of researches have used the Hamiltonian Floer theory on non-contractible trajectories for giving upper bounds of Biran-Polterovich-Salamon's capacities.
However, in the present paper, we use the Oh-Schwarz spectral invariants which are defined in terms of the Hamiltonian Floer theory on contractible trajectories for a similar purpose.
\end{abstract}

\section{Introduction}
A subset $X$ of a symplectic manifold $M$ is said to be \textit{displaceable} if $X$ is displaceable by some Hamiltonian diffeomorphism (we give a more precise definition in Section \ref{Preliminaries}).
A subset $X$ is said to be \textit{non-displaceable} otherwise.
Relationship between (non-)displaceability and the existence problem of (non-trivial, non-contractible) periodic trajectories on Hamiltonian dynamics is one of interesting topics in symplectic topology.

As the author knows, the most classical theorem in this topic is the energy capacity inequality.
Hofer and Zehnder defined a symplectic capacity called the Hofer-Zehnder capacity which indicates the existence of non-trivial periodic orbits on autonomous Hamiltonian dynamics.
The energy capacity inequality states that the Hofer-Zehnder capacity has an upper bound by displacement energy.
The energy capacity inequality on the Euclidean space is proved by Hofer and Zehnder (\cite{HZ}) and some researchers generalized their work to more general symplectic manifolds (for example, see \cite{Sc}, \cite{U}).

Biran, Polterovich and Salamon defined a relative symplectic capacity which indicates the existence of non-contractible periodic trajectories of Hamiltonian isotopies.
In the present paper, we consider relationships between Biran-Polterovich-Salamon's capacity and (non-)displaceability.

Now, we give the precise definition of Biran-Polterovich-Salamon's capacity.
For a compact subset $Y$ of an open symplectic manifold  $(N,\omega)$ and a free homotopy class $\alpha\in [S^1,N]$,
Biran, Polterovich and Salamon \cite{BPS} defined a relative symplectic capacity $C_{BPS}(N,Y;\alpha)$ by
\[C_{BPS}(N,Y;\alpha)=\inf \{K>0 ; \forall H\in \mathcal{H}_K(N,Y), \mathcal{P}(H;\alpha)\neq \emptyset \},\]
where 
\[\mathcal{H}_K(N,Y)=\{ H\in C_c^\infty(S^1\times N) ; \inf_{S^1\times Y} H \geq K \},\]
and $\mathcal{P}(H;\alpha)$ is the set of 1-periodic trajectories of the Hamiltonian isotopy generated by the Hamiltonian function $H$ in the class $\alpha$.

Biran, Polterovich and Salamon proved the following theorem by showing non-vanishing of the homomorphism from a symplectic homology to a relative symplectic homology.

\begin{thm}[\cite{BPS}]\label{BPS1}
Let $N$ be a connected closed Riemannian manifold and $\alpha\in[S^1,N]$ a non-trivial homotopy class of free loops in $N$.
Assume that $N$ is the $n$-dimensional torus or has the Riemannian metric whose sectional curvature is negative.
Then
\[C_{BPS}(B^\ast N,N;\alpha)=l_\alpha,\]
 where $l_\alpha$ is the infimum of length of closed geodesics in the class $\alpha$.
Here let $(B^\ast N,\omega_N)$ denote the unit ball subbundle of the cotangent bundle with the standard symplectic form $\omega_N$ and let $N$ denote the zero section of $B^\ast N$.
\end{thm}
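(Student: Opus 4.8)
\medskip
\noindent\textbf{Proof strategy.}
The plan is to establish the two inequalities $C_{BPS}(B^\ast N,N;\alpha)\ge l_\alpha$ and $C_{BPS}(B^\ast N,N;\alpha)\le l_\alpha$ separately; only the second will use the hypothesis on $N$.

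For the lower bound I would fix $K<l_\alpha$ and produce an $H\in\mathcal H_K(B^\ast N,N)$ with $\mathcal P(H;\alpha)=\emptyset$; since then $K$ is not in the set defining $C_{BPS}(B^\ast N,N;\alpha)$, letting $K\uparrow l_\alpha$ yields the inequality. Using the given metric on $N$ (flat, in the torus case), write $|p|$ for the induced fibre norm on $T^\ast N$ and take $H(q,p)=h(|p|)$, where $h\colon[0,\infty)\to\mathbb R$ is smooth, non-increasing, equal to $K$ near $0$, equal to $0$ on $[1,\infty)$, and satisfies $\sup|h'|<l_\alpha$ --- possible exactly because $K<l_\alpha$, by letting $h$ descend from $K$ to $0$ with slope of modulus close to $K$ over most of $[0,1]$. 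Then $H$ is supported in the open ball bundle and $\inf_{S^1\times N}H=K$, so $H\in\mathcal H_K(B^\ast N,N)$, while the flow of $H$ is a reparametrised cogeodesic flow: a non-constant $1$-periodic orbit lies on a level $\{|p|=r\}$ with $h'(r)\neq0$ and projects to a closed geodesic of length $|h'(r)|<l_\alpha$, so it cannot represent the nontrivial class $\alpha$, and the remaining $1$-periodic orbits are constant, hence contractible. Thus $\mathcal P(H;\alpha)=\emptyset$, as wanted.

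For the upper bound I would show that for every $\varepsilon>0$ and every $H\in\mathcal H_{l_\alpha+\varepsilon}(B^\ast N,N)$ one has $\mathcal P(H;\alpha)\neq\emptyset$; letting $\varepsilon\downarrow0$ then gives $C_{BPS}(B^\ast N,N;\alpha)\le l_\alpha$. Suppose not; fix such an $H$, and after a small perturbation assume it is non-degenerate, still compactly supported in $B^\ast N$, with $\inf_{S^1\times N}H>l_\alpha$ and with no $1$-periodic orbit in class $\alpha$. I would work with the Hamiltonian Floer homology $HF^\alpha_\ast$ of $(T^\ast N,\omega_N)$ in the free homotopy class $\alpha$, which is well defined for compactly supported Hamiltonians because $T^\ast N$ is convex at infinity and the relevant Floer trajectories satisfy an a priori $C^0$-bound; the absence of $\alpha$-orbits makes the Floer complex of $H$ trivial, so $HF^\alpha_\ast(H)=0$. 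On the other hand, the isomorphism of Viterbo and Abbondandolo--Schwarz (see also Salamon--Weber) identifies the Floer homology of $T^\ast N$ in the class $\alpha$ with the singular homology $H_\ast(\Lambda_\alpha N)$ of the component of the free loop space of $N$ in class $\alpha$, compatibly with the action and length filtrations. Here the hypothesis on $N$ enters: for $N$ a flat torus the closed geodesics in class $\alpha$ form a non-degenerate critical torus of index $0$ at length $l_\alpha$ onto which $\Lambda_\alpha N$ deformation retracts, and for $N$ negatively curved the unique closed geodesic in class $\alpha$ is a non-degenerate critical circle of index $0$ at length $l_\alpha$ with $\Lambda_\alpha N\simeq S^1$; in both cases $H_\ast(\Lambda_\alpha N)\neq0$ and is carried entirely in length $<l_\alpha+\varepsilon$. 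A comparison (continuation) argument between $H$ and a fibrewise convex radial Hamiltonian of slope larger than $l_\alpha+\varepsilon$ then transports this nonzero class into $HF^\alpha_\ast(H)$, the inequality $\inf_{S^1\times N}H>l_\alpha$ being precisely what makes the relevant continuation map nonzero; this contradicts $HF^\alpha_\ast(H)=0$.

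The main obstacle is the upper bound. The lower bound uses only the elementary dynamics of radial Hamiltonians, and setting up Floer homology for non-contractible loops on the non-compact $T^\ast N$ with the required compactness and transversality is by now routine for cotangent bundles. The two delicate points are: normalising the action--length correspondence so that the answer is $l_\alpha$ rather than, say, $\tfrac12 l_\alpha^2$; and, more seriously, locating the nonzero loop-space homology at length exactly $l_\alpha$ --- for a general closed manifold the shortest closed geodesic in a free homotopy class may be a saddle of the energy functional and carry no homology, so that only a larger minimax length would be forced, and it is the assumption that $N$ is a flat torus or is negatively curved that makes this shortest geodesic a non-degenerate minimum carrying the homology. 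In \cite{BPS} the comparison argument is organised through a relative symplectic homology of the pair $(B^\ast N,N)$, and that is the framework I would follow to make it precise.
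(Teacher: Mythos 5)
This theorem is quoted from \cite{BPS}; the present paper contains no proof of it beyond the remark that it follows from the non-vanishing of a homomorphism from a symplectic homology to a relative symplectic homology. Your outline --- the elementary lower bound via radial Hamiltonians $h(|p|)$ of slope less than $l_\alpha$ generating reparametrised cogeodesic flow, and the upper bound via non-vanishing of filtered Floer homology in the class $\alpha$ identified with the loop-space homology of $\Lambda_\alpha N$, which the torus/negative-curvature hypothesis concentrates at length exactly $l_\alpha$ --- is essentially that same approach (the argument of \cite{BPS}, in the form later generalised by Weber \cite{W}), and you correctly flag the two genuinely delicate points, namely the action--length normalisation and the location of the nonzero loop-space homology.
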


After the above work by Biran, Polterovich and Salamon, Weber \cite{W} proved that Theorem \ref{BPS1}  holds for any connected closed Riemannian manifold $N$ and
Niche \cite{N} gave upper bounds of Biran-Polterovich-Salamon's capacities for twisted cotangent bundles.

One of reasons why $C_{BPS}(B^\ast N,N;\alpha)$ is finite in their cases is that the zero-section $N$ is non-displaceable in $B^\ast N$.
Indeed, Biran, Polterovich and Salamon essentially proved the following proposition.

\begin{prop}[Proposition 3.3.2 of \cite{BPS}]\label{BPS displaceability}
Let $(N,\omega)$ be a connected open symplectic manifold and $Y$ a compact subset of $N$. Let $\alpha$ be a non-trivial homotopy class of free loops.
Assume that there exists a Hamiltonian function $H\colon S^1\times N\to\mathbb{R}$ with compact support such that $Y\cap\phi^1_H(Y)=\emptyset$ and $\mathcal{P}(H;\alpha)=\emptyset$.
Then $C_{BPS}(N,Y;\alpha)=\infty$.
Here $\{\phi_H^t\}$ is the Hamiltonian isotopy generated by $H$.
\end{prop}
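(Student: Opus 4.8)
The plan is to establish the statement directly: I will show that for every $K>0$ the set $\mathcal{H}_K(N,Y)$ contains a Hamiltonian $H'$ with $\mathcal{P}(H';\alpha)=\emptyset$. This makes the set $\{K>0:\ \forall H\in\mathcal{H}_K(N,Y),\ \mathcal{P}(H;\alpha)\neq\emptyset\}$ empty, whence $C_{BPS}(N,Y;\alpha)=\inf\emptyset=\infty$.

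Fix $K>0$. First I would normalize $H$. Replacing $H$ by the reparametrized Hamiltonian $(t,x)\mapsto\dot\mu(t)\,H(\mu(t),x)$, with $\mu\colon[0,1]\to[0,1]$ smooth and nondecreasing, $\mu\equiv0$ near $0$ and $\mu\equiv1$ near $1$, produces the isotopy $t\mapsto\phi^{\mu(t)}_H$; this changes neither the time-one map $\phi^1_H$ nor the free homotopy classes of the $1$-periodic trajectories, since each trajectory is merely reparametrized by $\mu$, which is homotopic rel endpoints to the identity map of $[0,1]$. Hence we may assume $H(t,\cdot)\equiv0$ for $t$ in a neighborhood of $\{0,1\}\subset S^1$. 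Since $Y$ and $\phi^1_H(Y)$ are disjoint compact sets and $\phi^1_H$ is a diffeomorphism, I then choose an open set $U$ with $Y\subseteq U$ and $\phi^1_H(\overline U)\cap\overline U=\emptyset$, together with a smooth function $F\colon N\to[0,\infty)$ with compact support contained in $U$ that equals a constant $c\geq K+\sup_{S^1\times N}|H|$ on an open neighborhood $V$ of $Y$. Define
\[
H'(t,x)=F(x)+H\bigl(t,(\phi^t_F)^{-1}(x)\bigr),
\]
which lies in $C^\infty_c(S^1\times N)$---smoothness across $t\in\{0,1\}$ is exactly where the normalization of $H$ is used, as there $H'(t,\cdot)=F$---and which generates the isotopy $t\mapsto\phi^t_F\circ\phi^t_H$. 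Because $dF=0$ on $V$, we have $\phi^t_F|_V=\mathrm{id}$, so $H'(t,y)=F(y)+H(t,y)=c+H(t,y)\geq K$ for every $(t,y)\in S^1\times Y$; thus $H'\in\mathcal{H}_K(N,Y)$.

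The heart of the matter is to show $\mathcal{P}(H';\alpha)=\emptyset$. Let $\gamma(t)=\phi^t_F(\phi^t_H(z))$ be a $1$-periodic trajectory of $H'$, so $z=\phi^1_F(\phi^1_H(z))$. The standard homotopy for compositions of isotopies---sliding the $\phi_H$-factor so that it runs first, via $\Gamma_s(t)=\phi^{a_s(t)}_F(\phi^{b_s(t)}_H(z))$, where $(a_0,b_0)$ is the identity pair, $(a_1,b_1)$ realizes the concatenation, and each $\Gamma_s$ is a loop based at $z$---shows that $\gamma$ is freely homotopic to the concatenation $q_1\ast q_2$, with $q_1(t)=\phi^t_H(z)$ running from $z$ to $w:=\phi^1_H(z)$ and $q_2(t)=\phi^t_F(w)$ running from $w$ back to $\phi^1_F(w)=z$. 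Next I would argue $w\notin\operatorname{supp} F$: if $z\in\overline U$ then $w=\phi^1_H(z)\in\phi^1_H(\overline U)$, which is disjoint from $\overline U\supseteq\operatorname{supp} F$; and if $z\notin\overline U$, then $w\in\operatorname{supp} F$ would give $z=\phi^1_F(w)\in\phi^1_F(\operatorname{supp} F)=\operatorname{supp} F\subseteq\overline U$ (note $\phi^1_F$ fixes $N\setminus\operatorname{supp} F$ pointwise and hence maps $\operatorname{supp} F$ onto itself), a contradiction. Therefore $\phi^t_F$ fixes $w$ for all $t$, so $q_2$ is constant and $z=w=\phi^1_H(z)$; hence $\gamma$ is freely homotopic to the loop $q_1(t)=\phi^t_H(z)$, which is a $1$-periodic trajectory of $H$, and so $[\gamma]=[q_1]\neq\alpha$ by hypothesis. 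As $\gamma$ was an arbitrary $1$-periodic trajectory of $H'$, this gives $\mathcal{P}(H';\alpha)=\emptyset$, and the proof is complete.

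The step I expect to be the main obstacle is the last one: ruling out that the large bump $F$ has created a new trajectory in the class $\alpha$. The displacement hypothesis enters exactly there, forcing the ``$F$-part'' $q_2$ of every closed trajectory of $H'$ to be constant---which is why it is essential that $\operatorname{supp} F$ lie inside a set that $\phi^1_H$ pushes off itself. The remaining ingredients---the composition-of-isotopies homotopy, the fact that $\phi^1_F$ preserves $\operatorname{supp} F$, and the bookkeeping needed to keep $H'$ smooth on $S^1\times N$ (which is what forces the preliminary reparametrization of $H$)---are routine but easy to mishandle.
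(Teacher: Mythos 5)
Your proof is correct. Note that the paper itself does not prove this proposition --- it is quoted as Proposition 3.3.2 of [BPS] --- and your argument is precisely the standard one from that source: displace a neighborhood $U$ of $Y$, plant a large bump $F$ supported in $U$ and constant near $Y$, form $F\sharp H$, and observe that every $1$-periodic orbit of $\phi_F^t\phi_H^t$ must have its $F$-portion constant (since $\phi_H^1$ pushes $\operatorname{supp}F$ off itself), hence is freely homotopic to a $1$-periodic orbit of $H$, none of which lie in $\alpha$. All the supporting details you flag (the reparametrization of $H$ making $F\sharp H$ smooth and periodic in $t$, $\phi_F^1(\operatorname{supp}F)=\operatorname{supp}F$, and the rel-endpoints homotopy from $\phi_F^t\phi_H^t(z)$ to the concatenation) check out.
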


Thus, we would like to know the problem whether Biran-Polterovich-Salamon's capacity is finite or not on non-displaceable subsets in general.

In the present paper, we consider Biran-Polterovich-Salamon's capacity in a special case and we prepare some notions now.

For  $R=(R_1,\ldots,R_n)\in (\mathbb{R}_{>0})^n $, let $I_R^n$ be the open subset of $\mathbb{R}^n$ defined by $I_R^n=\{ p=(p_1,\ldots,p_n)\in\mathbb{R}^n;|p_i|<R_i\text{ for }i=1,\ldots,n\}$.
We consider the standard symplectic form $\omega_0=dp_1\wedge dq_1+\cdots +dp_n\wedge dq_n$ on $I_R^n\times T^{n}$ with coordinates $(p,q)=(p_1,\ldots,p_n,q_1,\ldots,q_n)$, where we regard $T^n$ as $(\mathbb{R}/\mathbb{Z})^n$.
We denote the zero-section $\{0\}\times T^n$ of $I_R^n\times T^n$ by $T^n$.

Let $(M,\omega)$ be a connected symplectic manifold and $X$ a compact subset of $M$. 
For $e=(e_1,\ldots,e_n)\in \mathbb{Z}^n$ and $R=(R_1,\ldots,R_n)\in (\mathbb{R}_{>0})^n$, we define the relative symplectic capacity $C(M,X,R;e)$ by
\[C(M,X,R;e)=C_{BPS}(M\times I_R^n\times T^n, X\times T^n;(0_M,e)).\]
Here, we fix the symplectic form $\operatorname{pr}_1^\ast\omega +\operatorname{pr}_2^\ast\omega_0$ on $ M\times I_R^n\times T^n$.

We pose the following conjecture.
For a positive integer $n$, a subset $X$ of a symplectic manifold $M$ is \textit{$n$-stably displaceable} if $X\times T^n$ is displaceable in $M\times T^\ast T^n$.
A subset $X$ is \textit{$n$-stably non-displaceable} otherwise.
$1$-stably displaceable subsets are called stably displaceable (\cite{EP09}).
Note  that any displaceable subset is $n$-stably displaceable for any $n$.
\begin{conj}\label{sn conj}
Let $X$ be an $n$-stably non-displaceable compact subset of a closed symplectic manifold $(M,\omega)$.
Then the equality
\[C(M,X,R;e)=\sum_{i=1}^nR_i\cdot |e_i|\]
holds for any elements $e=(e_1,\ldots,e_n)$ and $R=(R_1,\ldots,R_n)$ of $\mathbb{Z}^n$ and $(\mathbb{R}_{>0})^n$, respectively.
\end{conj}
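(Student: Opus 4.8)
The plan is to prove the two inequalities $C(M,X,R;e)\ge\sum_{i=1}^n R_i|e_i|$ and $C(M,X,R;e)\le\sum_{i=1}^n R_i|e_i|$ separately. The first is elementary and uses no hypothesis on $X$; the second is where the non-displaceability enters, and it carries essentially all of the difficulty.

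\emph{The lower bound.} Since the class $(0_M,e)$ is non-trivial we have $e\neq0$; fix an index $j$ with $e_j\neq0$. Given $K<\sum_i R_i|e_i|$, take the autonomous Hamiltonian $H=\sum_{i=1}^n h_i(p_i)$ on $M\times I^n_R\times T^n$, where $h_i\in C_c^\infty\bigl((-R_i,R_i)\bigr)$, $h_i\ge0$, $\sum_i h_i(0)=K$, and the Lipschitz constant of $h_j$ is strictly less than $|e_j|$. Such a choice exists because $K<\sum_i R_i|e_i|$ allows $h_j(0)<R_j|e_j|$, and a non-negative bump on $(-R_j,R_j)$ with value at the origin below $R_j|e_j|$ can be taken with slope of modulus $<|e_j|$. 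Then $H\in\mathcal H_K(M\times I^n_R\times T^n,X\times T^n)$ because $\inf_{X\times T^n}H=\sum_i h_i(0)=K$, whereas (solving Hamilton's equations, which decouple) every $1$-periodic trajectory of $\phi_H$ is of the form $t\mapsto\bigl(m_0,p^0,q^0+t\,(h_1'(p_1^0),\dots,h_n'(p_n^0))\bigr)$ with $h_i'(p_i^0)\in\mathbb Z$; belonging to the class $(0_M,e)$ would force $h_j'(p_j^0)=e_j$, which never happens. Hence $\mathcal P(H;(0_M,e))=\emptyset$ and $K<C(M,X,R;e)$; letting $K\uparrow\sum_i R_i|e_i|$ yields the inequality.

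\emph{The upper bound.} Fix $K>\sum_i R_i|e_i|$ and $H\in\mathcal H_K(M\times I^n_R\times T^n,X\times T^n)$; we must produce an element of $\mathcal P(H;(0_M,e))$. The idea is to move to a closed symplectic manifold on which the class $(0_M,e)$ becomes contractible, so that the Oh--Schwarz spectral invariants apply. For a small $\varepsilon>0$, embed each annulus $A_i:=(-R_i,R_i)\times S^1$ symplectically into a $2$-sphere $S^2_i$ of area $2R_i+\varepsilon$ so that the core circle $\{p_i=0\}\times S^1$ becomes an area-bisecting equator $E_i$; set $\widehat M=M\times\prod_{i=1}^n S^2_i$ and let $\widehat H\in C^\infty(S^1\times\widehat M)$ be the extension of $H$ by $0$ (so $\widehat M\supset M\times\prod_i A_i=M\times I^n_R\times T^n$ and $\widehat H$ vanishes off this set). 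Since each $E_i$ is superheavy in $S^2_i$ and heaviness is preserved under products, $X\times\prod_i E_i$ is heavy in $\widehat M$ provided $X$ is heavy in $(M,\omega)$, and then the Oh--Schwarz invariant of the fundamental class obeys
\[
c(\mathbf 1_{\widehat M};\widehat H)\ \ge\ \zeta(\widehat H)\ \ge\ \inf_{X\times\prod_i E_i}\widehat H\ =\ \inf_{X\times T^n}H\ \ge\ K\ >\ \sum_i R_i|e_i|,
\]
with $\zeta$ the associated partial symplectic quasi-state and the first step being subadditivity of spectral invariants. One then feeds this into the following correspondence: $\widehat M$ carries the Hamiltonian loop obtained by rotating the $i$-th sphere $e_i$ times about its polar axis, which restricts on $\prod_i A_i$ to the shear $(p,q)\mapsto(p,q+te)$, so composing $\{\phi_{\widehat H}^t\}$ with the reverse of this loop converts $1$-periodic orbits of $\phi_{\widehat H}$ lying in $M\times\prod_i A_i$ into $1$-periodic orbits of an associated Hamiltonian $\widehat G$ with winding shifted by $-e$. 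A careful comparison of the action value $c(\mathbf 1_{\widehat M};\widehat H)\ge K>\sum_i R_i|e_i|$ with the actions available to constant orbits and to orbits of winding other than $e$ is then meant to force the spectral invariant to be carried by a $1$-periodic orbit of $\phi_H$ of class exactly $(0_M,e)$ — an element of $\mathcal P(H;(0_M,e))$. Letting $K\downarrow\sum_i R_i|e_i|$ completes the argument.

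\emph{Where this is hard.} The scheme above in fact proves the conjecture only under the stronger hypothesis that $X$ is heavy (or superheavy), since the positivity $c(\mathbf 1_{\widehat M};\widehat H)\ge\inf_{X\times T^n}H$ is squeezed out of heaviness of $X\times\prod_i E_i$; mere $n$-stable non-displaceability of $X$ — that is, non-displaceability of $X\times T^n$ in $M\times T^\ast T^n$ — is not known to give any lower bound for a spectral invariant of $\widehat H$, and supplying that missing Floer-theoretic input is exactly the content of the conjecture. The second, more technical obstacle is the last step of the upper bound: one has to rule out that $c(\mathbf 1_{\widehat M};\widehat H)$ is accounted for by a constant orbit of $\widehat H$ or by an orbit of the wrong winding, which involves controlling the period group $\omega(\pi_2(M))+\sum_i(2R_i+\varepsilon)\mathbb Z$ (for instance assuming $(M,\omega)$ symplectically aspherical or rational, and, after a harmless normalization, $H\ge0$) and forcing the resulting $O(\varepsilon)$ error to vanish in the limit $K\downarrow\sum_i R_i|e_i|$. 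I expect that, carried out carefully, this route establishes the conjectured equality for heavy $X$ and isolates the single remaining issue of upgrading stable non-displaceability to spectral positivity.
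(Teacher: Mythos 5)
First, a point of orientation: the statement you are proving is stated in the paper as a \emph{conjecture}, and the paper does not prove it. What the paper actually establishes is (a) the analogous equality for the weaker, measure-theoretic capacity $C^P\le C$ (Theorem \ref{measure capacity ineq}, deduced from Polterovich's invariant-measure theorem), and (b) for \emph{heavy} $X$ in a monotone $M$, the non-sharp upper bound $C(M,X,R;e)\le 2\sum_i R_i|e_i|+\max\{0,\lambda(m+n)\}$ (Theorem \ref{capacity statement}, proved via Oh--Schwarz spectral invariants and composition with a Hamiltonian loop of winding $-e$). Your proposal, as you yourself say, also does not prove the conjecture, so it must be judged as a partial attempt; the honest identification of the heaviness-versus-stable-non-displaceability gap is correct and is exactly the open core of the problem.

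There are, however, concrete defects beyond the admitted ones. In the lower bound, the Hamiltonian $H=\sum_{i=1}^n h_i(p_i)$ is \emph{never} compactly supported on $M\times I_R^n\times T^n$ when $n\ge 2$ (unless $H\equiv 0$): the support of $h_i(p_i)$ contains a full slab $\operatorname{supp}h_i\times\prod_{k\ne i}(-R_k,R_k)$, whose closure in the open set $I_R^n$ is non-compact. Since $C_{BPS}$ quantifies only over $C_c^\infty$, your construction is vacuous for $n\ge2$; and truncating in the remaining $p$-variables introduces large velocities $\partial H/\partial p_k$ in the cut-off region, which can create orbits of the forbidden winding. A correct construction must couple the variables, e.g.\ $H=h(\max_i|p_i|/R_i)$ smoothed, where the gradient lies in $h'\cdot\operatorname{conv}\{\pm\mathbf e_i/R_i\}$ and can hit $e$ only if $|h'|\ge\sum_iR_i|e_i|$ somewhere --- this is where the threshold $\sum_iR_i|e_i|$ (rather than, say, $\min_iR_i|e_i|$) actually comes from, and your argument never touches it. In the upper bound, besides the heaviness issue, the step ``a careful comparison \dots is then meant to force'' is not an argument: the paper's realization of this idea (Propositions \ref{Irie and Seyfaddini} and \ref{cL}) needs spectrality, the Conley--Zehnder index constraint on the carrier of the fundamental class, and monotonicity to control the recapping ambiguity, and even then it pays $\|H\|_{C^0}$ twice plus a term $\max\{0,\lambda(m+n)\}$; you give no mechanism for obtaining the sharp constant $\sum_iR_i|e_i|$. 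Finally, capping the annuli into spheres of area $2R_i+\varepsilon$ destroys the monotonicity of the ambient manifold (the spheres' monotonicity constants will not match $\lambda$), which is precisely why the paper compactifies $I_R^n\times T^n$ to the aspherical torus $T_R^n\times T^n$ instead.
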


In Section \ref{conj section}, we give an example such that $C(M,X,R;e)=+\infty$ even though $X$ is non-displaceable.


In Section \ref{measure observation}, we introduce a relative symplectic capacity $C^P$ which is defined in terms of invariant measures of (time-independent) Hamiltonian flow and satisfies $C^P(M,X,R;e)\leq C(M,X,R;e)$.
We give the following theorem which supports Conjecture \ref{sn conj}.
\begin{thm}\label{measure capacity ineq}
Let $(M,\omega)$ be a closed symplectic manifold and $X$ an $n$-stably non-displaceable compact subset of $M$.
Then
\[C^P(M,X,R;e)= \sum_{i=1}^nR_i\cdot |e_i|,\]
for any elements  $e=(e_1,\ldots,e_n)$ and $R=(R_1,\ldots,R_n)$ of $\mathbb{Z}^n$ and $(\mathbb{R}_{>0})^n$, respectively.
\end{thm}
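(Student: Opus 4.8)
The plan is to establish the two inequalities $C^P(M,X,R;e)\ge\sum_iR_i|e_i|$ and $C^P(M,X,R;e)\le\sum_iR_i|e_i|$ separately; only the second will use that $X$ is $n$-stably non-displaceable. Composing with the symplectomorphism $(p_i,q_i)\mapsto(-p_i,-q_i)$ on the factors with $e_i<0$ (this fixes $X\times T^n$ and $R$ and replaces the class by $(0_M,(\dots,|e_i|,\dots))$), we may assume $e_i\ge 0$ for all $i$. We shall also use repeatedly that if an autonomous Hamiltonian $H$ on $M\times I_R^n\times T^n$ is independent of the $M$-coordinate, then $\phi_H^t=\mathrm{id}_M\times\sigma^t$, every $\phi_H^t$-invariant probability measure disintegrates over $M$ into $\sigma^t$-invariant measures, and its Schwartzman asymptotic cycle lies in $\{0\}\oplus\mathbb{R}^n\subset H_1(M;\mathbb{R})\oplus H_1(T^n;\mathbb{R})$ with $H_1(T^n)$-component equal to $\int\nabla_pH\,d\mu$, where $\nabla_pH=(\partial_{p_1}H,\dots,\partial_{p_n}H)$.

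\emph{Lower bound.} Fix $K<\sum_iR_i|e_i|$; we construct $H\in\mathcal{H}_K(M\times I_R^n\times T^n,X\times T^n)$ whose flow admits no invariant probability measure with asymptotic cycle $(0_M,e)$, which (as $K$ is arbitrary) gives $C^P\ge\sum_iR_i|e_i|$. Let $\Delta=\{x\in\mathbb{R}^n:\sum_iR_i|x_i|\le 1\}$ be the polar box and let $N\colon\mathbb{R}^n\to[0,\infty)$ be a smoothing of $p\mapsto\max_i|p_i|/R_i$ obtained by mollification, so that $\nabla N(p)\in\Delta$ for all $p$ (mollification preserves the bound $\|\nabla N\|_{\ast}\le 1$ for the norm dual to $\Delta$) and $N(0)$ is small. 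Pick $c$ with $K<c<\sum_iR_i|e_i|$ and, for $\eta>0$ small, a non-increasing $f\in C^\infty([0,\infty))$ supported in $[0,1-\eta)$ with $|f'|\le c$ and $f(0)\ge K$ (possible since $c(1-\eta)>K$ and $N(0)<1-\eta$). Set $H(m,p,q)=f(N(p))$. Then $H$ is compactly supported in $M\times I_R^n\times T^n$ and $H\equiv f(0)\ge K$ on $X\times T^n$, so $H\in\mathcal{H}_K$, while $\nabla_pH=f'(N)\nabla N\in c\Delta$ pointwise; hence every invariant measure of $\phi_H^t$ has asymptotic cycle in $\{0\}\oplus c\Delta$, which does not contain $(0_M,e)$ because $\sum_iR_i|e_i|>c$. (This step uses no non-displaceability, consistently with Proposition \ref{BPS displaceability}.)

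\emph{Upper bound.} Fix $K>\sum_iR_i|e_i|$ and $H\in\mathcal{H}_K(M\times I_R^n\times T^n,X\times T^n)$; we must produce a $\phi_H^t$-invariant probability measure with asymptotic cycle $(0_M,e)$. Choose $a_i>2R_i$ with $\sum_i\tfrac{a_i}{2}|e_i|<K$ still and embed each annulus $\{|p_i|<R_i\}\times(\mathbb{R}/\mathbb{Z})$ symplectically onto a neighbourhood of the equator $E_i$ of a $2$-sphere of area $a_i$; these assemble into a symplectic embedding $M\times I_R^n\times T^n\hookrightarrow\overline W:=M\times\prod_iS^2(a_i)$ into a \emph{closed} symplectic manifold, and $H$ extends by $0$ to $\overline H\in C^\infty(S^1\times\overline W)$. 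The structural point is that $\prod_iS^2(a_i)$ is simply connected, so the class $(0_M,e)$ becomes null-homotopic in $\overline W$; hence the Oh--Schwarz spectral invariants of $\overline W$, built from the Floer complex of \emph{contractible} $1$-periodic orbits, become available, and a $1$-periodic orbit together with a capping can represent $(0_M,e)$. The hypothesis enters here: $n$-stable non-displaceability of $X$, i.e. non-displaceability of $X\times T^n$ in $M\times T^\ast T^n$, is what is needed to bound the min-max level defining the spectral invariant of $\overline H$ attached to the fundamental class (equivalently, the value on $\overline H$ of the relevant partial symplectic quasi-state in the sense of Entov--Polterovich's heavy subsets) below by $\inf_{X\times T^n}H\ge K$. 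On the other hand, any $1$-periodic orbit of $\overline H$ that winds around some $E_i$ a number of times other than $e_i$, or that sweeps a sphere factor over one of its poles, carries action strictly below $K$, since $K>\sum_i\tfrac{a_i}{2}|e_i|$ exceeds the symplectic area of the cheapest capping of an $(0_M,e)$-loop. Comparing these two facts, for a cofinal sequence of non-degenerate perturbations $\overline H_k\to\overline H$ the spectral invariant is carried by a $1$-periodic orbit $\gamma_k$ lying in $M\times I_R^n\times T^n$ and freely homotopic there to $(0_M,e)$. Finally, $\mu_k$, the push-forward under $\gamma_k$ of Lebesgue measure on $[0,1]$, is $\phi_{\overline H_k}^t$-invariant with asymptotic cycle $(0_M,e)$; a weak-$\ast$ limit $\mu$ is $\phi_{\overline H}^t$-invariant, supported in $M\times I_R^n\times T^n$, and still has asymptotic cycle $(0_M,e)$, since pairing with any fixed closed $1$-form is continuous along the limit and constant along the sequence. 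This $\mu$ is the required measure, so $C^P\le\sum_iR_i|e_i|$, completing the proof.

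\emph{Main obstacle.} The delicate part is the upper bound, in two respects: (i) arranging the spectral-invariant argument on the compactification $\overline W$ so that the orbit — hence the limiting measure — it produces genuinely represents $(0_M,e)$ rather than another null-homotopic class of $\overline W$, which rests on a careful comparison of the equator areas $a_i\approx 2R_i$, the numerical hypothesis $K>\sum_iR_i|e_i|$, and the action filtration; and (ii) converting "$X$ is $n$-stably non-displaceable" into the quantitative positivity $\ge K$ of the spectral number of $\overline H$ — in particular relating non-displaceability in $M\times T^\ast T^n$ to positivity on $\overline W$ — which is precisely where the heavy-subset machinery underlying the title of the paper does its work.
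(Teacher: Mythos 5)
Your lower bound is essentially the paper's (the paper takes $H(x,p,q)=\rho(p)$ with $|\partial\rho/\partial p_i|<|e_i|$ and tests against $\lambda=\sum_i l_i\,dq_i$). One caveat: with the paper's actual definition of $C^P$ as $\sup_{\mathbf{l}^\ast}C^P_{BPS}(\cdot;\mathbf{l}^\ast,\cdot)$, showing that no invariant measure has asymptotic cycle $(0_M,e)$ is not by itself the required statement; you must exhibit a single class, e.g.\ $\mathbf{l}^\ast=\sum_iR_i\operatorname{sgn}(e_i)[dq_i]$, for which every invariant measure satisfies $|\langle\mathbf{l}^\ast,\rho(\mu,X_H)\rangle|\le c<\mathbf{l}^\ast((0_M,e))$. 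Your pointwise bound $\nabla_pH\in c\Delta$ does deliver exactly this, so the lower bound is fine after that one remark.

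The upper bound has a genuine gap at the step where the hypothesis enters. You claim that non-displaceability of $X\times T^n$ in $M\times T^\ast T^n$ forces the spectral invariant of $\overline H$ (equivalently the value of the partial quasi-state attached to the fundamental class) to be at least $\inf_{X\times T^n}H$. That inequality is precisely the statement that $X\times T^n$ is \emph{heavy}, and heaviness is strictly stronger than stable non-displaceability; the theorem assumes only that $X$ is $n$-stably non-displaceable, so this implication is unavailable. Moreover, even granting heaviness, the compactify-by-spheres-and-filter-by-action argument you sketch is essentially the paper's proof of Theorem~\ref{capacity statement}, which in addition needs $M$ monotone and even then yields only the non-sharp bound $2\sum_iR_i|e_i|+\max\{0,\lambda(m+n)\}$; your claim that every $1$-periodic orbit not in class $(0_M,e)$ has action below $K$ is not justified and is exactly the hard point. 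The paper sidesteps all of this by exploiting the weakness of the definition of $C^P$: for each class $\mathbf{l}^\ast$ one only needs \emph{some} invariant measure (allowed to depend on $\mathbf{l}^\ast$, and not required to have rotation vector $(0_M,e)$) with $|\langle\mathbf{l}^\ast,\rho(\mu,X_H)\rangle|\ge\mathbf{l}^\ast((0_M,e))$. This is obtained softly from Theorem~\ref{Polterovich invariant measure} (Polterovich, \cite{P14}) applied to the two disjoint, non-displaceable sets $(\psi_0^1)^{-1}(X)\times\{-K\mathbf{a}\}\times T^n$ and $X\times\{0\}\times T^n$, joined by a symplectic isotopy of flux $K\mathbf{l}^\ast$ with $K\mathbf{a}\in\partial\bar I_R^n$; the non-displaceability of these sets is exactly where $n$-stable non-displaceability is used, and no Floer theory is needed. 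You should replace your upper-bound argument by this one.
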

Theorem \ref{measure capacity ineq} is obtained as a corollary of Polterovich's theorem in \cite{P14} (see Section \ref{measure observation}).


However, our main theorem is another one which also supports Conjecture \ref{sn conj}.
To explain our main theorem, we prepare some notions.

For a real number $\lambda$, a symplectic manifold $(M,\omega)$ is called $\lambda$-\textit{monotone} if  $[\omega]=\lambda c_1$ on $\pi_2(M)$ and \textit{monotone} if $(M,\omega)$ is $\lambda$-monotone for some non-negative $\lambda$.
Here $c_1$ is the first Chern class of $TM$ with respect to an almost complex structure compatible with $\omega$.

Entov and Polterovich (\cite{EP09}) defined heaviness for closed subsets of symplectic manifolds in terms of the Hamiltonian Floer theory on contractible trajectories (see Section \ref{spectral invariants and heavy subsets}).
Heavy subsets are known to be $n$-stably non-displaceable for any $n$ and thus non-displaceable.

Our main theorem is the following one.
\begin{thm}\label{capacity statement}
Let $X$ be a heavy subset of a $2m$-dimensional connected closed $\lambda$-monotone symplectic manifold $(M,\omega)$.
Then $C(M,X,R;e)\leq2\sum_{i=1}^nR_i\cdot |e_i|+\max\{0,\lambda(m+n)\}$ for any elements $e=(e_1,\ldots,e_n)$ and $R=(R_1,\ldots,R_n)$ of $\mathbb{Z}^n$ and $(\mathbb{R}_{>0})^n$, respectively.
\end{thm}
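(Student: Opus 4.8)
The plan is to show that for every $K$ strictly larger than $2\sum_i R_i|e_i|+\max\{0,\lambda(m+n)\}$ and every $H\in\mathcal H_K(M\times I_R^n\times T^n,X\times T^n)$, the flow $\phi_H$ carries a $1$-periodic orbit in the free homotopy class $(0_M,e)$; by definition of $C_{BPS}$ this yields the asserted bound on $C(M,X,R;e)$. One may assume $e\neq0$, since for $e=0$ the class is trivial and the compactly supported $H$ has constant orbits at its extrema. I would first choose $R_i'<R_i$ with $\operatorname{supp}H\subset S^1\times M\times I_{R'}^n\times T^n$ and areas $a_i>2R_i$, and embed $M\times I_R^n\times T^n$ symplectically onto an open subset $\Omega=M\times\prod_i B_i$ of $\hat M:=M\times\prod_{i=1}^n S^2(a_i)$, where $S^2(a_i)$ is the $2$-sphere of area $a_i$ and $B_i\subset S^2(a_i)$ is the open equatorial band of area $2R_i$, arranged so that the zero section $\{0\}\times T^n$ goes to the product of the equators. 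Extending $H$ by zero gives $\hat H\in C^\infty_c(S^1\times\hat M)$, supported in a compact subset of $\Omega$. Since the embedding identifies $\pi_1(M\times I_R^n\times T^n)$ with $\pi_1(\Omega)$ and an orbit in class $(0_M,e)$ is nonconstant, hence contained in $\operatorname{supp}\hat H\subset\Omega$, it suffices to produce a $1$-periodic orbit of $\phi_{\hat H}$ in $\Omega$ representing $(0_M,e)$.

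Next I would bring in heaviness. Since $X$ is heavy in $M$ and the equator is superheavy in each $S^2(a_i)$, the product formula for heavy and superheavy subsets \cite{EP09} shows that $X\times T^n$, identified with $X\times\prod_i(\text{equator of }S^2(a_i))$, is heavy in $\hat M$. Hence the partial symplectic quasi-state $\zeta$ of $\hat M$ built from the Oh--Schwarz spectral invariant $c([\hat M],\cdot)$ of the fundamental class satisfies $\zeta(F)\ge\int_0^1\inf_{X\times T^n}F(t,\cdot)\,dt$ for all $F$, together with $\zeta(F)\le c([\hat M],F)$.

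Then I would untwist the winding. Let $G\colon\hat M\to\mathbb R$ generate the Hamiltonian circle action rotating each $S^2(a_i)$ about its axis with speed $e_i$ and fixing $M$, normalised so that $G\equiv0$ on the product of the equators; then $\phi^1_G=\mathrm{id}$, $\phi^t_G$ preserves $X\times T^n$ and $\Omega$, and $\max_{\hat M}G=-\min_{\hat M}G=\tfrac12\sum_i|e_i|a_i$. Set $F:=\overline G\#\hat H=(-G)\#\hat H$, a genuine Hamiltonian on the closed manifold $\hat M$ whose time-$t$ flow is $\phi^{-t}_G\circ\phi^t_{\hat H}$. A $1$-periodic orbit $x$ of $\phi_{\hat H}$ in $\Omega$ represents $(0_M,e)$ precisely when $y(t):=\phi^{-t}_G(x(t))$ is a $1$-periodic orbit of $\phi_F$ lying in $\Omega$, contractible in $\hat M$, and with trivial winding around the $q$-circles; I call such a $y$ \emph{admissible}. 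Because $\phi^t_G$ preserves $X\times T^n$ and $G$ vanishes there, $F(t,\cdot)|_{X\times T^n}=\hat H(t,\cdot)|_{X\times T^n}\ge K$, whence $c([\hat M],F)\ge\zeta(F)\ge K$.

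It then remains — and this is the step I expect to be the main obstacle — to show that $c([\hat M],F)\ge K>2\sum_i R_i|e_i|+\max\{0,\lambda(m+n)\}$ forces $\phi_F$ to possess an admissible $1$-periodic orbit, after which $x(t)=\phi^t_G(y(t))$ is the required orbit of $\phi_H$ and one lets $a_i\to 2R_i$, $R_i'\to R_i$. By spectrality, $c([\hat M],F)$ is the action of some capped contractible $1$-periodic orbit of $F$, and that orbit satisfies a Maslov-index constraint coming from the fundamental class sitting in degree $2(m+n)$. The non-admissible orbits are of two kinds: those outside $\Omega$, which for the autonomous Hamiltonian $-G$ are products of poles and of latitude circles traced $|e_i|$ times, and those in $\Omega$ whose retwist is a $1$-periodic orbit of $\phi_{\hat H}$ with nonzero $q$-winding. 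The heart of the argument is to prove that every non-admissible capped orbit compatible with the relevant Maslov index has action at most $2\sum_i R_i|e_i|+\max\{0,\lambda(m+n)\}$: the first term is the limit of the oscillation $\sum_i|e_i|a_i$ of $G$ as $a_i\to 2R_i$, and the second is the shift imposed by the monotonicity constant $\lambda$ of $M$ through the action--Maslov relation on $\hat M$ (equivalently, through the valuation of the Seidel element of the loop $\{\phi^t_G\}$), with $\max\{0,\cdot\}$ accounting for the favourable sign when $\lambda<0$. The delicate point is that a non-admissible orbit can be recapped to have arbitrarily large action, so the Maslov constraint satisfied by $c([\hat M],F)$ must be used to discard the wrong cappings; carrying this out — using the Hofer continuity and triangle inequality for spectral invariants, the value of $c([\hat M],-G)$ for the moment map of the circle action, and the action--Maslov bookkeeping on $\hat M$ — is the core of the proof, and $c([\hat M],F)\ge K$ then places the selected orbit among the admissible ones.
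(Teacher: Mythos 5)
Your overall architecture matches the paper's: compactify the $I_R^n\times T^n$ factor, ``untwist'' the unknown class-$e$ orbits by composing with a Hamiltonian whose trajectories wind in class $-e$, bound the resulting spectral invariant $c([\hat M],\cdot)$ of the fundamental class from above, and contradict the lower bound $\zeta(F)\geq\inf_{X\times T^n}F$ coming from heaviness of the product. But there are two genuine gaps. First, and most importantly, the step you yourself flag as ``the core of the proof'' --- showing that every capped contractible $1$-periodic orbit of $F=\overline G\#\hat H$ compatible with the Maslov index of the fundamental class has action at most $2\sum_iR_i|e_i|+\max\{0,\lambda(m+n)\}$ --- is exactly the content of the paper's Proposition \ref{cL} together with Lemma \ref{function on torus}, and you have not carried it out. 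The paper does it by constructing $H$ so that \emph{all} contractible orbits of the concatenated Hamiltonian are constant loops at finitely many critical points with $\operatorname{ind}_{\mathrm{Morse}}=\operatorname{ind}_{\mathrm{CZ}}$; this pins the recapping class $A$ to $-w\leq c_1(A)\leq 0$ and yields the bound $\|H\|_{C^0}+\max\{0,\lambda w\}$, after which Lipschitz continuity contributes the second $\|H\|_{C^0}$. Without an argument of this precision, spectrality alone does not exclude the large-action recappings you mention.

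Second, your choice of compactification into $\hat M=M\times\prod_iS^2(a_i)$ actively obstructs that core step. The sphere $S^2(a_i)$ is $(a_i/2)$-monotone, so $\hat M$ is monotone only if $a_i=2\lambda$ for every $i$, which is incompatible with letting $a_i\to 2R_i$ for arbitrary $R$; for general $R$ the manifold $\hat M$ is not monotone at all, and the action--Maslov relation, the discreteness of the action spectrum, and the valuation of the Seidel element that your sketch relies on are no longer available in the form you need. Moreover, your $-G$ is a genuine circle action, so every point outside $\operatorname{supp}\hat H$ contributes a \emph{non-constant contractible} $1$-periodic orbit of $F$ (a latitude circle traversed $|e_i|$ times), producing a Morse--Bott continuum of contractible orbits whose actions sweep intervals of length comparable to $a_i$; bounding the selected action then genuinely requires computing $c([\hat M],-G)$ via the Seidel element rather than the elementary index bookkeeping. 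The paper sidesteps both issues by compactifying into $M\times T_R^n\times T^n$ (the added factor has $\pi_2=0$, so $\lambda$-monotonicity is preserved) and by deforming $-e_ip_i$ near the boundary of the band into a Morse function on $T^2$ whose only contractible orbits are the four critical points (via the Yorke estimate). I would recommend switching to the torus compactification, or else supplying the full Seidel-element computation and a replacement for monotonicity on $\hat M$.
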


We can rewrite Theorem \ref{capacity statement} in the following form.

\begin{thm}\label{new characterization of heaviness}
Let $X$ be a heavy subset of a $2m$-dimensional connected closed $\lambda$-monotone symplectic manifold $(M,\omega)$.
Let $e=(e_1,\ldots,e_n)$ and $R=(R_1,\ldots,R_n)$ be elements of $\mathbb{Z}^n$ and $(\mathbb{R}_{>0})^n$, respectively.
We fix the symplectic form $\operatorname{pr}_1^\ast\omega+\operatorname{pr}_2^\ast\omega_0$ on $ M\times I_R^n\times T^n$, where $\operatorname{pr}_1\colon  M\times I_R^n\times T^n\to M$ and $\operatorname{pr}_2\colon  M\times I_R^n\times T^n \to I_R^n\times T^n$ are the the projections defined by $\operatorname{pr}_1(x,p,q)=x$ and $\operatorname{pr}_2(x,p,q)=(p,q)$.
Let $F\colon S^1\times M\times I_R^n\times T^n\to \mathbb{R}$ be a Hamiltonian function with compact support such that 
\[F|_{S^1\times X\times T^n}\geq 2\sum_{i=1}^nR_i\cdot |e_i|+\max\{0,-\lambda(m+n)\}.\]
Then the Hamiltonian isotopy $\{\phi_F^{t}\}_{t\in\mathbb{R}}$ has a 1-periodic trajectory in the free loop homotopy class $(0_M,e)\in[S^1,M\times I_R^n\times T^n]$. 
\end{thm}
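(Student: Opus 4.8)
The plan is to reduce the statement, via a compactification, to an assertion about \emph{contractible} periodic orbits on a closed symplectic manifold, where the heaviness of $X$ and the Oh--Schwarz spectral invariants apply directly. Assume, for contradiction, that $\{\phi_F^t\}$ has no $1$-periodic trajectory in the class $(0_M,e)$. For each $i$ embed the annulus $(I_{R_i}^1\times S^1,\,dp_i\wedge dq_i)$ symplectically as an open subset of a $2$-sphere $\Sigma_i$ of total area $A_i$, with $\varepsilon_i:=A_i-2R_i>0$ arbitrarily small, so that the circles $\{p_i\}\times S^1$ are the latitudes of $\Sigma_i$ and the zero-latitude $\{0\}\times S^1$ is its \emph{equator} (possible for any $\varepsilon_i>0$, since this only means the two complementary caps have equal area). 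Put $\bar M:=M\times\Sigma_1\times\cdots\times\Sigma_n$, a closed symplectic manifold of complex dimension $m+n$ containing $M\times I_R^n\times T^n$ as an open subset; as each $\Sigma_i$ is simply connected, the class $(0_M,e)$ is nullhomotopic in $\bar M$. Since the equator of a $2$-sphere is a (super)heavy subset, the product property of heaviness (Entov--Polterovich) shows that the zero-section $X\times T^n$ is a heavy subset of $\bar M$, say with respect to the idempotent $a\in QH_\ast(\bar M)$.

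Next, introduce a twisted Hamiltonian. Choose $h_i\colon\Sigma_i\to\mathbb R$ generating a Hamiltonian circle action of period $1$ with $h_i=p_i$ on the annulus region, and let $K:=\pm\sum_i e_i h_i\colon\bar M\to\mathbb R$, with the sign chosen so that its flow $\phi_K^t$ is the translation $q\mapsto q+te$ on the annulus region; then $\phi_K^1=\mathrm{id}$ and $K$ vanishes on the zero-section. With $F^{\mathrm{ext}}$ denoting $F$ extended by $0$ to $S^1\times\bar M$, set
\[
G_t:=F^{\mathrm{ext}}_t\circ\phi_K^t-K\ \colon\ S^1\times\bar M\longrightarrow\mathbb R .
\]
Unwinding a class-$(0_M,e)$ trajectory of $\phi_F$ by subtracting the path $t\mapsto(0,0,te)$ turns it into a contractible loop, and a direct computation shows that this construction gives a bijection between $1$-periodic trajectories of $\phi_F$ in the class $(0_M,e)$ and $1$-periodic orbits of $\phi_G$ that (i) are contractible in $\bar M$, (ii) have image in $\overline{\operatorname{supp}F}\subset M\times I_R^n\times T^n$, and (iii) have vanishing winding around the $T^n$-factors. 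The twist serves two purposes. First, since $\phi_K^t$ preserves $X\times T^n$ while $K$ vanishes there,
\[
G_t\ \ge\ F|_{S^1\times X\times T^n}\ \ge\ 2\sum_{i=1}^n R_i|e_i|+\max\{0,-\lambda(m+n)\}\qquad\text{on }S^1\times X\times T^n .
\]
Second, the subtraction of $K$ re-centres the action functional of $G$ about the class $e$, so that orbits of nonzero winding are penalised.

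Now feed this into the spectral formalism. Write $c(\,\cdot\,;\,\cdot\,)$ for the Oh--Schwarz spectral invariants of $\bar M$ and $\zeta_a$ for the homogenisation of $c(a;\,\cdot\,)$. Heaviness of $X\times T^n$ and the displayed lower bound for $G$ give
\[
c(a;G)\ \ge\ \zeta_a(G)\ \ge\ \int_0^1\inf_{X\times T^n}G_t\,dt\ \ge\ 2\sum_{i=1}^n R_i|e_i|+\max\{0,-\lambda(m+n)\},
\]
where the first inequality is the subadditivity of $k\mapsto c(a;G^{\natural k})$ along the $k$-fold concatenation (valid because $a$ is idempotent). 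Since $c(a;G)$ is a critical value of the action functional of $G$, there is a contractible $1$-periodic orbit $z$ of $\phi_G$ with a capping $u$ such that $\mathcal A_G(z,u)=c(a;G)$. If $z$ satisfies (i)--(iii), then re-twisting produces the desired class-$(0_M,e)$ trajectory of $\phi_F$, a contradiction; so it suffices to exclude the two failure modes. Constant orbits of $\phi_G$ lying outside $\operatorname{supp}F$ have action $0$; an orbit entering the polar caps of the $\Sigma_i$, where $G_t=-K$ is autonomous, has action at most $\tfrac12\sum_i|e_i|A_i$ for the capping the spectral invariant selects; and an orbit of nonzero winding must carry a capping of correspondingly larger symplectic area, hence of smaller action. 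As $\tfrac12\sum_i|e_i|A_i\to\sum_i R_i|e_i|$ when $A_i\downarrow 2R_i$, while $2\sum_i R_i|e_i|>\sum_i R_i|e_i|$, the lower bound for $\mathcal A_G(z,u)$ forces $z$ to satisfy (i)--(iii), and the proof is complete.

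The crux, and the part most in need of care, is the action bookkeeping invoked in the last step: one must bound from above the action of \emph{every} contractible $1$-periodic orbit of $\phi_G$ that violates (i)--(iii), evaluated with the capping that the spectral invariant actually picks out. This amounts to controlling the symplectic area of that capping disk. Because $\bar M$ need not be monotone, the capping may differ from a naive one by a spherical class; but the first Chern number of that class is pinned down, up to $\dim_{\mathbb C}\bar M=m+n$, by the degree of $a$, and invoking the $\lambda$-monotonicity of $M$ and letting $A_i\downarrow 2R_i$ converts this defect precisely into the term $\max\{0,\lambda(m+n)\}$ appearing in the capacity bound. The remaining ingredients, namely the compactification, the heaviness of the zero-section in $\bar M$, and the twisted-Hamiltonian dictionary of the second paragraph, are by comparison formal.
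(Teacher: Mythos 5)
Your overall strategy --- compactify the $I_R^n\times T^n$ factor, untwist class-$(0_M,e)$ orbits into contractible ones by composing with a loop of Hamiltonian diffeomorphisms, and play a heaviness lower bound for $\zeta_a$ against an action upper bound coming from the assumed absence of such orbits --- is the same contradiction scheme the paper uses. But your choice of compactification creates a gap that your final paragraph acknowledges and does not close. Capping each annulus $I_{R_i}\times S^1$ by two discs to form a sphere $\Sigma_i$ of area $A_i\approx 2R_i$ changes $\pi_2$: the new manifold $\bar M=M\times\Sigma_1\times\cdots\times\Sigma_n$ has $\Gamma$ containing the classes $[\Sigma_i]$ with $c_1([\Sigma_i])=2$ and $\omega([\Sigma_i])=A_i$, so $\bar M$ is \emph{not} monotone unless every $A_i=2\lambda$, which is incompatible with $A_i\downarrow 2R_i$ for arbitrary $R_i$. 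Your entire last step rests on bounding the action of every ``bad'' contractible orbit \emph{with the capping the spectral invariant selects}. The only a priori constraint on that capping is homological: spectrality plus the degree of $a$ pins down the Conley--Zehnder index, hence $c_1$ of the recapping class $A=\sum_i k_i[\Sigma_i]+A_0$ up to $m+n$. But a bound on $c_1(A)=2\sum_i k_i+c_1(A_0)$ does not bound $\omega(A)=\sum_i k_iA_i+\lambda c_1(A_0)=\sum_i k_i(A_i-2\lambda)+\lambda c_1(A)$, which is unbounded in $\sum_i k_i$ precisely because $A_i\neq 2\lambda$. So the actions of the bad orbits (constants outside $\operatorname{supp}F$, the circle-action orbits of $-K$ filling the polar caps, and orbits of wrong winding $e'\neq e$, all of which are contractible in $\bar M$ and hence visible to $c(a;\cdot)$) are \emph{not} bounded above over admissible cappings, and the exclusion argument fails. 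This is not a presentational issue: it is exactly the point where monotonicity must be preserved, and the sphere compactification destroys it.

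The paper's proof avoids this by compactifying $I_R^n\times T^n$ to the torus $T_R^n\times T^n$ instead, which leaves $\pi_2$ unchanged, so $M\times T_R^n\times T^n$ remains $\lambda$-monotone and the index bound $-w\le c_1(A)\le 0$ converts into the area bound via $\omega(A)=\lambda c_1(A)$, producing the $\max\{0,\lambda(m+n)\}$ term honestly (Proposition \ref{cL}). The second ingredient you are missing is control of the auxiliary dynamics outside the annulus: the paper builds the untwisting Hamiltonian $H^{R,\epsilon,e}$ (Lemma \ref{function on torus}) as a Morse function equal to $-e_ip_i$ on the middle region, with $\|H^{R,\epsilon,e}\|_{C^0}<(R_i-\epsilon)|e_i|$, whose \emph{only} contractible periodic orbits are constants at its four critical points (via the Yorke estimate) with matching Morse and Conley--Zehnder indices. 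This eliminates by construction the families of ``cap'' orbits that your $-K$ produces, and yields an injection $\tilde{\mathcal P}(L)\hookrightarrow\tilde{\mathcal P}(H)$ preserving action and index, from which the upper bound $c([\hat W],F)\le 2\|H\|_{C^0}+\max\{0,\lambda w\}$ follows cleanly. If you want to salvage your sphere picture you would have to either take $A_i=2\lambda$ (losing all generality in $R$) or redo the estimate at the Floer chain level where the recapping ambiguity is genuinely constrained; as written, the proposal does not prove the theorem.
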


Many of other works have used the Hamiltonian Floer theory on non-contractible trajectories to give upper bounds of BPS capacities (\cite{BPS}, \cite{W}, \cite{N}, \cite{X}).
Other work (for example, see \cite{G}, \cite{GG}) also uses such Floer theory to find non-contractible trajectories.
However, in the present paper, we use the Hamiltonian Floer theory on contractible trajectories to give an upper bound of BPS capacities.
More precisely, we use the Oh-Schwarz spectral invariants (see Section \ref{spectral invariants and heavy subsets}) which are defined in terms of the Hamiltonian Floer theory on contractible trajectories.

In contrast, for a displaceable compact subset $X$, we have the following result.
\begin{prop}\label{our displaceability}
Let $(M,\omega)$ be a connected symplectic manifold and $X$ a displaceable compact subset of $M$.
Let $e=(e_1,\ldots,e_n)$ and $R=(R_1,\ldots,R_n)$ be elements of $\mathbb{Z}^n$ and $(\mathbb{R}_{>0})^n$ such that $R_k\cdot |e_k|>E(X)$ for some $k$, respectively.
Here $E(X)$ denotes the displacement energy of $X$ (see Section \ref{displaceable}).
Then $C(M,X,R;e)=\infty$.
\end{prop}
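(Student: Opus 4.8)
The plan is to verify the hypothesis of Proposition \ref{BPS displaceability} for $N=M\times I_R^n\times T^n$, $Y=X\times\{0\}\times T^n$ and the free homotopy class $\alpha=(0_M,e)$: I would exhibit a single compactly supported Hamiltonian $H\colon S^1\times N\to\mathbb R$ with $\phi^1_H(Y)\cap Y=\emptyset$ and $\mathcal P(H;\alpha)=\emptyset$, and then that proposition immediately gives $C(M,X,R;e)=C_{BPS}(N,Y;\alpha)=\infty$. After permuting the factors of $I_R^n\times T^n$ we may assume $k=1$; note $R_1|e_1|>E(X)\ge0$ forces $e_1\ne0$, and put $d:=R_1|e_1|$.

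Since $E(X)<d$, I would choose a compactly supported $G\colon S^1\times M\to\mathbb R$ with $\phi^1_G(X)\cap X=\emptyset$ and $\int_0^1\bigl(\max_M G_t-\min_M G_t\bigr)\,dt<d$, and set $\mu_+:=\int_0^1\max_M G_t\,dt$, $\mu_-:=\int_0^1\min_M G_t\,dt$. We may arrange $\mu_-\le0\le\mu_+$ (automatic when $M$ is open since $G$ is compactly supported; when $M$ is closed, first subtract a suitable function of $t$ from $G_t$, which changes neither $\phi^t_G$ nor the smoothness of $G$), so that $\max(\mu_+,-\mu_-)\le\mu_+-\mu_-<d$. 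Next I would choose cut-off functions $\rho_i\colon I^1_{R_i}\to[0,1]$ for $i=2,\dots,n$ and $\chi\colon I^1_{R_1}\to[0,1]$, each equal to $1$ near $0$ and to $0$ near the endpoints, with $\chi$ monotone on $[-R_1,0]$ and on $[0,R_1]$, and with $\max|\chi'|$ chosen so small that
\[
\bigl(\max|\chi'|\bigr)\cdot\max(\mu_+,-\mu_-)<|e_1|.
\]
This is possible precisely because $\max(\mu_+,-\mu_-)<d=R_1|e_1|$, while a cut-off on $I^1_{R_1}$ can be taken with $\max|\chi'|$ as close to $1/R_1$ as one wishes (by shrinking the boundary layers). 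Writing $B(p):=\prod_{i=2}^n\rho_i(p_i)$, the Hamiltonian is
\[
H(t,x,p,q):=B(p)\,\chi(p_1)\,G(t,x),
\]
which is smooth with compact support in $S^1\times N$.

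The two required properties would then be checked as follows. First, $H$ is independent of the angle coordinates $q$, so the $p$-coordinate is constant along every trajectory of $\phi^t_H$; near $p=0$ one has $B\equiv\chi\equiv1$ and $\chi'\equiv0$, hence $\phi^t_H(x,0,q)=(\phi^t_G(x),0,q)$, so $\phi^1_H(Y)=\phi^1_G(X)\times\{0\}\times T^n$ is disjoint from $Y$. Second, along a $1$-periodic trajectory of $\phi^t_H$ the $p$-coordinate is some constant $a=(a_1,\dots,a_n)\in I_R^n$; writing $x(t)\in M$ for the $M$-component, the winding number of the $q_1$-component equals
\[
w_1=\int_0^1 B(a)\,\chi'(a_1)\,G\bigl(t,x(t)\bigr)\,dt=B(a)\,\chi'(a_1)\int_0^1 G\bigl(t,x(t)\bigr)\,dt,
\]
and $\int_0^1 G(t,x(t))\,dt\in[\mu_-,\mu_+]$ because $\min_M G_t\le G(t,x(t))\le\max_M G_t$ for every $t$. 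Since $B(a)\in[0,1]$, this gives $|w_1|\le(\max|\chi'|)\max(\mu_+,-\mu_-)<|e_1|$, hence $w_1\ne e_1$, so no $1$-periodic trajectory of $\phi^t_H$ represents $(0_M,e)$, i.e. $\mathcal P(H;(0_M,e))=\emptyset$. By Proposition \ref{BPS displaceability}, $C(M,X,R;e)=\infty$.

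I do not expect a real obstacle here; the one delicate point is the quantitative choice of $\chi$, where the hypothesis $R_k|e_k|>E(X)$ is used exactly in the form needed to keep the $q_k$-winding of every trajectory of $\phi^t_H$ strictly below $|e_k|$ in absolute value. It is also convenient to leave $G$ un-normalized so that only $\mu_\pm$ --- and not an additive constant in $H$ --- enter the estimate: this is what makes $H$ manifestly compactly supported when $M$ is open, and is why no extra summand depending only on $p_1$ is required in $H$.
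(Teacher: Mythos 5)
Your proof is correct and follows essentially the same route as the paper's: multiply a displacing Hamiltonian $G$ by a plateau function of the $p$-variables whose derivative is small enough that the $q_k$-winding of every $1$-periodic trajectory is bounded by $\max|\chi'|\cdot\int_0^1\|G_t\|_{C^0}\,dt<|e_k|$, and then invoke Proposition \ref{BPS displaceability}. The only nit is that the paper's $E(X)$ is the infimum of $\int_0^1\|G_t\|_{C^0}\,dt$ rather than of the oscillation $\int_0^1(\max_M G_t-\min_M G_t)\,dt$, so you should pick $G$ with $\|G\|<d$ and bound $\bigl|\int_0^1G(t,x(t))\,dt\bigr|\le\|G\|<d$ directly; this renders your normalization detour unnecessary and makes the estimate identical to the paper's.
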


\begin{exam}\label{torus and cp^n}
Let $(\mathbb{C}P^m,\omega_{FS})$ be the complex projective space with the Fubini-Study form. 
Let $\Phi\colon\mathbb{C}P^m\to\mathbb{R}^m$ be the moment map defined by 
\[\Phi([z_0:{\ldots}:z_m])=(\frac{|z_0|^2}{|z_0|^2+\cdots+|z_m|^2},\ldots,\frac{|z_m|^2}{|z_0|^2+\cdots+|z_m|^2}).\]
The Clifford torus $\Phi^{-1}(y_0)$ is a heavy subset of $(\mathbb{C}P^m,\omega_{FS})$ where $y_0=(\frac{1}{m+1},\ldots,\frac{1}{m+1})$ and $(\mathbb{C}P^m,\omega_{FS})$ is a monotone symplectic manifold.
Thus Theorem \ref{capacity statement} implies the finiteness of $C(\mathbb{C}P^m,\Phi^{-1}(y_0),R;e)$ for any elements $e=(e_1,\ldots,e_m)$ and $R=(R_1,\ldots,R_m)$ of $\mathbb{Z}^m$ and $(\mathbb{R}_{>0})^m$, respectively.

Lemma 5.1 of \cite{BEP} essentially proves that there exists a positive constant $P$ such that $E(\Phi^{-1}(y))<P$ for any element $y\neq y_0$ of $\mathbb{R}^m$.
Thus for any element $y\neq y_0$ of $\mathbb{R}^m$, Proposition \ref{our displaceability} implies $C(\mathbb{C}P^m,\Phi^{-1}(y),R;e)=\infty$ for any elements $e$ and $R$ of $\mathbb{Z}^m$ and $(\mathbb{R}_{>0})^m$  such that $R_k\cdot |e_k|>P$ for some $k$, respectively.
\end{exam}

The present paper is organized as follows.
We review some definitions in symplectic topology in Section \ref{Preliminaries} and spectral invariants in Section \ref{spectral invariants and heavy subsets} which are needed to prove Theorem \ref{new characterization of heaviness} in Section \ref{prf of main theorem}.
We discuss in Section \ref{non-monotone} the existence of periodic trajectories of period not more than 1.
In Section \ref{displaceable}, we look at the capacity of displaceable subsets and prove Proposition \ref{our displaceability}.
In Section \ref{compressible section}, we discuss generalizations of our main Example \ref{torus and cp^n}.
In Sections \ref{conj section}, we give a counter example when the assumption that $X$ is $n$-stably non-displaceable in Conjecture \ref{sn conj} is replaced by that $X$ is non-displaceable.
In Section \ref{measure observation}, we define the relative capacity $C^P$ and prove Theorem \ref{measure capacity ineq}.


\subsection*{Acknowledgement}
The author would like to thank his advisor Professor Takashi Tsuboi for his helpful advices.
He also thanks Professor Leonid Polterovich, Daniel Rosen and Egor Shelukhin for the faithful discussion.
Especially, Leonid encouraged him when he found a critical mistake in the previous version of this paper.
The author also thanks Hiroyuki Ishiguro, Professor Kaoru Ono and Ryuma Orita for some comments.
He is supported by  IBS-R003-D1, the Grant-in-Aid for Scientific Research (KAKENHI No. 25-6631) and the Grant-in-Aid for JSPS fellows. This work was 
supported by the Program for Leading Graduate Schools, MEXT, Japan.

\section{Preliminaries}\label{Preliminaries}

For a Hamiltonian function $H\colon M\to\mathbb{R}$ with compact support, we define the \textit{Hamiltonian vector field} $X_H$ associated with $H$ by
\[\omega(X_H,V)=-dH(V)\text{ for any }V \in \mathcal{X}(M),\]
where $\mathcal{X}(M)$ denotes the set of smooth vector fields on $M$.

Let $S^1$ denote $\mathbb{R}/\mathbb{Z}$.
For a (time-dependent) Hamiltonian function $H\colon S^1\times M\to\mathbb{R}$ with compact support and for $t \in S^1$, we define $H_t\colon M\to\mathbb{R}$ by $H_t(x)=H(t,x)$. 
We denote the Hamiltonian vector field associated with $H_t$ by $X_H^t$ and denote by $\{\phi_H^t\}_{t\in\mathbb{R}}$ the isotopy generated by $X_H^t$ such that $\phi^0=\mathrm{id}$.
For $x\in M$, we denote by $\gamma_H^x{\colon}[0,1]\to{M}$ the path defined by $\gamma_H^x(t)=\phi_H^t(x)$.

$\phi_H^1$ is called \textit{the Hamiltonian diffeomorphism generated by the Hamiltonian function $H$}.
For a symplectic manifold $(M,{\omega})$, let $\mathrm{Ham}(M,\omega)$ denote the group of Hamiltonian diffeomorphisms of $(M,{\omega})$.

A subset $X$ of $M$ is said to be \textit{displaceable} if $\bar{X}\cap\phi_H^1(X)=\emptyset$ for some Hamiltonian function $H\colon S^1\times M\to\mathbb{R}$, where $\bar{X}$ is the topological closure of $X$.
A subset $X$ is said to be \textit{non-displaceable} otherwise.

We denote the free loop space $C^\infty (S^1,M)$ of $M$ by $\mathcal{L}M$.
For $z\in \mathcal{L}M$, we denote its free homotopy class by $[z]\in [S^1,M]$.
Let $\operatorname{ev}\colon \mathcal{L}M\to M$ be the evaluation map defined by $\operatorname{ev}(z)=z(0)$.
For a given class $\alpha\in [S^1,M]$, we define the subset $\mathcal{L}_\alpha M$ of $\mathcal{L}M$ by $\mathcal{L}_\alpha M=\{ z\in \mathcal{L}M ; [z]= \alpha \}$.
For a Hamiltonian function $H\colon S^1\times M\to\mathbb{R}$, we define the set of 1-periodic trajectories of $\{\phi_H^t\}_{t\in\mathbb{R}}$ in the class $\alpha$ by
\[\mathcal{P}(H;\alpha)=\{ z\in \mathcal{L}_\alpha M ; \dot{z}(t)=X_H^t(z(t)) \}.\]
We define the covering space $\tilde{\mathcal{L}}_{0_M}(M)$ of $\mathcal{L}_{0_M}(M)$ by
\[\tilde{\mathcal{L}}_{0_M}(M)=\{ u \in C^\infty(D^2,M) ;  u|_{\partial D^2}\in\mathcal{L}_{0_M}(M) \}/\sim.\]
Here $u\sim u^\prime$ if $u|_{\partial D^2}=u^\prime|_{\partial D^2}$, $\omega(\bar{u}\sharp u^\prime)=0$ and $c_1(\bar{u}\sharp u^\prime)=0$, where $\sharp$ denotes the map from the sphere obtained from $u$ with the reversed orientation and $u^\prime$ by gluing along their common boundary.
We also define the covering space $\tilde{\mathcal{P}}(H)$ of $\mathcal{P}(H;0_M)$ by
\[\tilde{\mathcal{P}}(H)=\{ [z,u]\in \mathcal{P}(H;0_M)\times C^\infty(D^2,M) ;  u|_{\partial D^2}=z\}/\sim.\]
Here $[z,u]\sim[z^\prime,u^\prime]$ if $z=z^\prime$, $\omega(\bar{u}\sharp u^\prime)=0$ and $c_1(\bar{u}\sharp u^\prime)=0$.

\section{Spectral invariants and heavy subsets}\label{spectral invariants and heavy subsets}

\subsection{Spectral invariants}\label{introduction to spectral invariant}

For a $2m$-dimensional closed connected symplectic manifold $(M,\omega)$, we define
\[ {\Gamma}=\frac{\pi_2(M)}{\operatorname{Ker}(c_1)\cap\operatorname{Ker}([\omega])}. \]
The Novikov ring $\Lambda$ of the closed symplectic manifold $(M,\omega)$ is defined as follows:
\[ {\Lambda}=\left\{\sum_{A\in\Gamma}a_AA;a_A\in\mathbb{Z}_2,\forall R\in\mathbb{R},\#\{A;a_A\neq{0},\int_A{\omega}<R\}<\infty\right\}.\]
The quantum homology $QH_\ast(M,\omega)$ is a $\Lambda$-module which is isomorphic to $H_\ast(M;\mathbb{Z}_2)\otimes_{\mathbb{Z}_2}\Lambda$ and has a ring structure with the multiplication called the \textit{quantum product}.

For a Hamiltonian function $H\colon S^1\times M\to\mathbb{R}$, the action functional $\mathcal{A}_H\colon \tilde{\mathcal{L}}_{0_M}M\to\mathbb{R}$ is given by
\[\mathcal{A}_H([z,u])=\int_0^1H(t,z(t))dt-\int_{D^2}u^\ast\omega.\]
Then we regard $\tilde{\mathcal{P}}(H)$ as the set of critical points of $\mathcal{A}_H$.

We define the \textit{non-degeneracy} of Hamiltonian functions as follows:
\begin{defi}
A Hamiltonian function $H\colon S^1\times M\to\mathbb{R}$ is called \textit{non-degenerate} if for any element  $z$ of $\mathcal{P}(H;0_M)$, 1 is not an eigenvalue of the differential $(d\phi_H^1)_{z(0)}$.
\end{defi}

When $H$ is non-degenerate, the Floer chain complex $CF_\ast(H)$ is generated by $\tilde{\mathcal{P}}(H)$ as a module over $\mathbb{Z}_2$.
Since there exists a natural action of $\Lambda$ on $CF_\ast(H)$, we regard $CF_\ast(H)$ as a module over $\Lambda$.
The complex $CF_\ast(H)$ is graded by the Conley-Zehnder index $\operatorname{ind}_{CZ}$ (\cite{SZ}).
Note that $\operatorname{ind}_{CZ}([z,u\sharp A])=\operatorname{ind}_{CZ}([z,u])-2c_1(A)$ for any map $A\in \pi_2(M)$ in our convention.
Let $F\colon M\to\mathbb{R}$ be a Morse function on $M$ and $x$ a critical point of $F$.
Assume that $dF$ is $C^1$-small near $x$.
Then $\operatorname{ind}_{Morse}(x)=m-\operatorname{ind}_{CZ}([x,c_x])$,
where $c_x$ is a trivial capping disk and $\operatorname{ind}_{Morse}$ is the Morse index.
We formally obtain the boundary map of this complex by counting isolated negative gradient flow lines of $\mathcal{A}_H$ and we define its homology group $HF_\ast(H)$ which is called \textit{the Hamiltonian Floer homology on contractible trajectories} of $H$.

There exists a natural isomorphism $\Phi\colon QH_\ast(M,\omega)\to HF_\ast(H)$.
We call this isomorphism the PSS isomorphism (\cite{PSS}).

Given an element $A=\sum_i a_i[z_i,u_i]$ of $CF_\ast(H)$, we define the action level $l_H(A)$ of $A$ by
\[l_H(A)=\max\{\mathcal{A}_H([z_i,u_i]);a_i\neq 0\}.\]

For a non-zero element $a$ of $QH_\ast(M,\omega)$, we define the spectral invariant associated to $H$ and $a$ by
\[c(a,H)=\inf\{l_H(A);[A]=\Phi(a)\}.\]

The following proposition summarizes the properties of spectral invariants which we need to show our result.

\begin{prop}[\cite{Oh}]\label{Oh}
The spectral invariant has the following properties.

\begin{description}
\item[(1)\textit{Lipschitz property}]The map $H{\mapsto}c(a,H)$ is Lipschitz on $C^\infty(S^1\times M)$ with respect to the $C^0$-norm,
\item[(2)\textit{Homotopy invariance}] Assume that Hamiltonian functions $F,G\colon$ $S^1\times M\to\mathbb{R}$ are normalized \textit{i.e.} $\int_M F_t(x)\omega^m=0, \int_M G_t(x)\omega^m=0$ for any $t\in S^1$ and satisfy $\phi^1_F=\phi^1_G$ and that their Hamiltonian isotopies $\{\phi^t_F\}$ and $\{\phi^t_G\}$ are homotopic relative to endpoints. Then $c(a,F)=c(a,G)$,

\item[(3)\textit{Triangle inequality}]$c(a\ast{b},F\sharp G) \leq c(a,F)+c(b,G)$ for any Hamiltonian functions $F,G\colon S^1\times M\to\mathbb{R}$, where $\ast$ denotes the quantum product.
Here the Hamiltonian function $F{\sharp}G\colon S^1\times M\to\mathbb{R}$ is defined by $(F{\sharp}G)(t,x)=F(t,x)+G(t,(\phi_F^t)^{-1}(x))$ whose Hamiltonian isotopy is $\{\phi_F^t\phi_G^t\}$.
\end{description}
\end{prop}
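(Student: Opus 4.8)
The plan is to pass to a closed $\lambda$-monotone manifold, where the Oh--Schwarz invariants of Section~\ref{spectral invariants and heavy subsets} are available, to detect there a \emph{contractible} $1$-periodic orbit of a Hamiltonian obtained from $F$ by a rotation in the $T^n$-factor, and to read off from it the required trajectory in class $(0_M,e)$.

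\emph{Compactification, heaviness, and the twist.} For each $i$ fix $a_i>2R_i$ with $a_i-2R_i$ small, and let $\mathbb{T}^2_i=(\mathbb{R}/a_i\mathbb{Z})_{P_i}\times(\mathbb{R}/\mathbb{Z})_{Q_i}$ with $dP_i\wedge dQ_i$; then $I^1_{R_i}\times T^1$ embeds symplectically in $\mathbb{T}^2_i$ as $\{|P_i|<R_i\}$, so $M\times I^n_R\times T^n$ embeds in the closed manifold $\widetilde M:=M\times\prod_i\mathbb{T}^2_i$. The torus factors are symplectically aspherical, so $\pi_2(\widetilde M)=\pi_2(M)$ and $\widetilde M$ is again $\lambda$-monotone, of dimension $2(m+n)$; extending $F$ by $0$, any $1$-periodic orbit of $F$ in $\widetilde M$ winding $e_i$ times around each $Q_i$ and trivially in $M$ and the $P_i$'s must lie in $\operatorname{supp}F\subset M\times I^n_R\times T^n$ and there represents $(0_M,e)$, so it suffices to produce one in $\widetilde M$. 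Put $\widetilde X:=X\times\prod_i\{P_i=0\}$, i.e. $X$ times the zero section; since $X$ is heavy in $M$, each circle $\{P_i=0\}$ is heavy in $\mathbb{T}^2_i$ (a Lagrangian torus fibre), and heaviness is stable under products (Entov--Polterovich), $\widetilde X$ is heavy in $\widetilde M$. Finally choose $\rho_i\colon\mathbb{R}/a_i\mathbb{Z}\to\mathbb{R}$ with $\rho_i'\equiv-e_i$ on $\{|P_i|\le R_i\}$ and $\rho_i(0)=0$; the constraint $\int\rho_i'=0$ forces $\rho_i$ to return on the short complementary arc, where it may be chosen Morse with $\max\rho_i\le R_i|e_i|$ and $\min\rho_i\ge-R_i|e_i|$ up to errors that vanish as $a_i\downarrow 2R_i$. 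Set $K:=\sum_i\rho_i(P_i)$ on $\widetilde M$ and $G:=K\sharp F$, whose flow is $\phi^t_K\phi^t_F$.

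\emph{Two types of orbit and the spectral estimate.} Since the flow of $K$ fixes $M$ and every $P_i$ and sends $Q_i\mapsto Q_i+t\rho_i'(P_i)$, we have $\phi^1_K=\mathrm{id}$ on $\operatorname{supp}F$, and the contractible $1$-periodic orbits of $G$ are of two kinds: \textbf{(a)} orbits through points $x$ whose $F$-trajectory stays over $\{|P_i|<R_i\}$, where $\phi^1_G(x)=\phi^1_F(x)$ so $x$ is a $1$-periodic point of $F$ and the $G$-orbit is contractible precisely when the $F$-trajectory of $x$ represents $(0_M,e)$; and \textbf{(b)} orbits away from $\operatorname{supp}F$, which are the constant orbits at the critical points of $K$ (over $\{|P_i|>R_i\}$ when $e_i\neq0$), along which $G=K$. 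Suppose now that $F$ has no $1$-periodic orbit in class $(0_M,e)$; after a $C^\infty$-small perturbation of $F$ and of $K$ near its critical set --- harmless by the Lipschitz property of Proposition~\ref{Oh} --- we may take $G$ non-degenerate with no orbit of type (a) and non-degenerate orbits of type (b). By heaviness of $\widetilde X$, $c([\widetilde M],F)\ge\inf_{S^1\times\widetilde X}F\ge c_0$, where $c_0=2\sum_iR_i|e_i|+\max\{0,-\lambda(m+n)\}$; writing $F=\bar K\sharp G$ with $\bar K=-K$ generating $(\phi^t_K)^{-1}$ and using the triangle inequality together with $c([\widetilde M],-K)\le\max(-K)=-\min K$, we get $c([\widetilde M],G)\ge c_0+\min K$. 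On the other hand every contractible orbit of $G$ is now of type (b), along which $G=K$; hence $c([\widetilde M],G)$ equals the action of some type-(b) orbit capped so as to lie in the degree of the fundamental class, and this action is at most $\max K$ plus the shift caused by that capping, which on a $\lambda$-monotone manifold (where recapping by $A$ shifts action by $-\lambda c_1(A)$ and degree by $-2c_1(A)$) is at most $\max\{0,-\lambda(m+n)\}$. Combining, $c_0+\min K\le\max K+\max\{0,-\lambda(m+n)\}+o(1)$, i.e. $c_0\le(\max K-\min K)+\max\{0,-\lambda(m+n)\}+o(1)\le 2\sum_iR_i|e_i|+\max\{0,-\lambda(m+n)\}+o(1)$ as $a_i\downarrow2R_i$. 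Thus if $\inf_{S^1\times\widetilde X}F$ strictly exceeds $2\sum_iR_i|e_i|+\max\{0,-\lambda(m+n)\}$ we reach a contradiction, forcing a type-(a) orbit, which is precisely a $1$-periodic orbit of $F$ in $M\times I^n_R\times T^n$ in class $(0_M,e)$; the non-strict hypothesis of the theorem follows by applying this to $(1+\delta)F$ and letting $\delta\downarrow0$, and extracting a limiting orbit. (The directions with $e_i=0$ are trivial and are omitted from $K$.)

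\emph{Main obstacle.} The delicate point is the type-(b) analysis: the $\rho_i$ must be chosen so that, after the small perturbation, the spurious orbits are non-degenerate with Conley--Zehnder indices under control --- which requires $|\rho_i''|$ at the critical points of $\rho_i$ to be small against the inverse size of the perturbation, so the shears added in the $(P_i,Q_i)$-planes stay sub-resonant --- and one must then identify exactly the capping that carries such an orbit into the degree of $[\widetilde M]$, since this bookkeeping is what produces the term $\max\{0,-\lambda(m+n)\}$ (and, together with the loss $-\min K$ in transporting heaviness from $F$ to $G$ and the bound $\max K$ for the type-(b) actions, the factor $2$ in $2\sum R_i|e_i|$). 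The remaining ingredients --- symplectic asphericity preserving $\lambda$-monotonicity, heaviness of Lagrangian fibres of $\mathbb{T}^2$ and its stability under products, the time-dependent form of the heaviness inequality, and spectrality of $c([\widetilde M],\cdot)$ --- are standard.
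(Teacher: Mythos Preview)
Your proposal does not address the stated proposition at all. Proposition~\ref{Oh} is a list of standard properties of the Oh--Schwarz spectral invariants (Lipschitz continuity, homotopy invariance, triangle inequality); the paper gives no proof of it but simply cites \cite{Oh}. What you have written is instead a proof sketch of Theorem~\ref{new characterization of heaviness}, the main result of the paper, in which Proposition~\ref{Oh} is \emph{used} (you yourself invoke ``the Lipschitz property of Proposition~\ref{Oh}'' and ``the triangle inequality'').

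If one reads your text as an attempt at Theorem~\ref{new characterization of heaviness}, the strategy is essentially the paper's: compactify $I_R^n\times T^n$ into a product of flat $2$-tori, build an auxiliary autonomous Hamiltonian (your $K$, the paper's $H$ from Lemma~\ref{function on torus}) whose flow rotates each $Q_i$-circle by $-e_i$, concatenate with $F$ (your $G=K\sharp F$, the paper's $L$), and compare a lower bound coming from heaviness of $X\times T^n$ with an upper bound coming from the Conley--Zehnder/monotonicity bookkeeping on the ``spurious'' constant orbits outside $\operatorname{supp}F$. Your ``main obstacle'' paragraph is exactly the content of the paper's Proposition~\ref{cL} and Lemma~\ref{function on torus}. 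One genuine gap in your version: you write ``by heaviness of $\widetilde X$, $c([\widetilde M],F)\ge\inf_{S^1\times\widetilde X}F$'', but heaviness is a statement about the stabilization $\zeta_{[\widetilde M]}(F)=\lim_l c([\widetilde M],F^{\natural l})/l$, not about $c([\widetilde M],F)$ itself. The paper handles this correctly by first proving the upper bound $c([\widetilde M],F)<\text{const}$ under the no-orbit hypothesis (Proposition~\ref{Irie and Seyfaddini}), then feeding it through the triangle inequality to bound $\zeta_{[\widetilde M]}(F)$, and only then invoking heaviness. Your argument would need the same detour.
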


For a general Hamiltonian function $H\colon S^1\times M\to\mathbb{R}$ ($H$ can be degenerate), we define the spectral invariant $c(a,H)$ by the Lipschitz property.
Then the spectral invariant defined for general Hamiltonian functions also satisfies the properties in Proposition \ref{Oh}.

\subsection{Heaviness}

A series of Entov and Polterovich's work (\cite{EP03}, \cite{BEP}, \cite{EP06} and \cite{EP09}) gave applications of the Oh-Schwarz spectral invariants to non-displaceability problem.
In \cite{EP09}, they defined the notion of \textit{heaviness} of closed subsets in closed symplectic manifolds.

For an idempotent $a$ of the quantum homology $QH_\ast(M,\omega)$, we define the functional  $\zeta_a{\colon}$ $C^\infty(S^1\times M)\to\mathbb{R}$ as the stabilization of $c(a,\cdot);$
\[\zeta_a(H)=\lim_{l\to\infty}\frac{c(a,H^{\natural l})}{l},\]
where $H^{\natural l}\colon S^1\times M\to\mathbb{R}$ is defined by $H^{\natural l}(t,x)=lH(lt,x)$.

\begin{defi}[\cite{EP09}]\label{definition of heavy}
Let $(M,\omega)$ be a closed symplectic manifold and $a$ an idempotent of the quantum homology $QH_\ast(M,\omega)$.
A closed subset $X$ of $(M,\omega)$ is said to be $a$-\textit{heavy} if
\[\zeta_a(H)\geq\inf_{S^1\times X}H,\]
for any (time-dependent) Hamiltonian function $H\colon S^1\times M\to\mathbb{R}$.
A closed subset $X$ of $(M,\omega)$ is called \textit{heavy} if $X$ is $a$-heavy for some idempotent $a$ of $QH_\ast (M,\omega)$.
\end{defi}

Entov and Polterovich \cite{EP09} proved that every heavy subset is non-displaceable (\cite{EP09} Theorem 1.4).

\begin{rem}
The above definition of heaviness is different from the one of \cite{EP09} (in their definition, they considered only autonomous Hamiltonian functions).
However, as noted in \cite{S14}, the above definition is known to be equivalent to the one of \cite{EP09}.
\end{rem}

\begin{exam}\label{torus}
On the torus $T^n_R\times T^n=\mathbb{R}/2R_1\mathbb{Z}\times \cdots\times \mathbb{R}/2R_n\mathbb{Z}\times( \mathbb{R}/\mathbb{Z})^n$ with coordinates $(p,q)=(p_1,\ldots,p_n,q_1,\ldots,q_n)$,
we fix the standard symplectic form $\omega_0=dp_1\wedge dq_1+\cdots +dp_n\wedge dq_n$.
Entov and Polterovich \cite{EP09} proved that for any element $R=(R_1,\ldots,R_n)$ of $(\mathbb{R}_{>0})^n $, $\{0\}\times T^n$ is a heavy subset of $T^n_R\times T^n$.
\end{exam}

\section{Proof of Theorem \ref{new characterization of heaviness} }\label{prf of main theorem}
To prove Theorem \ref{new characterization of heaviness}, we give an upper bound of the spectral invariant associated to a Hamiltonian function $F\colon S^1\times M\times I_{R(2\epsilon)}^n\times T^n\to\mathbb{R}$ such that  $\mathcal{P}(F;(0_M,e))=\emptyset$.
Here, for $R=(R_1,\ldots,R_n)\in (\mathbb{R}_{>0})^n$ and a positive real number $\epsilon$ with  $\epsilon <\min\{R_1,\ldots,R_n\}$, let $R(\epsilon)$ denote $(R_1-\epsilon,\ldots,R_n-\epsilon)\in (\mathbb{R}_{>0})^n$.

\begin{prop}\label{Irie and Seyfaddini}
Let $(M,\omega)$ be a $2m$-dimensional connected closed $\lambda$-monotone symplectic manifold.
Let $e=(e_1,\ldots,e_n)$ and $R=(R_1,\ldots,R_n)$ be elements of $\mathbb{Z}^n$ and $(\mathbb{R}_{>0})^n$, respectively.
For a positive real number $\epsilon$ with $2\epsilon <\min\{R_1,\ldots,R_n\}$, let $U_\epsilon$ be the open subset of $T^n_R\times T^n$ defined by
\[U_\epsilon=\{(p,q)\in T^n_R\times T^n; p\in I_{R(2\epsilon)}\}.\]
We fix the symplectic form $\operatorname{pr}_1^\ast\omega +\operatorname{pr}_2^\ast\omega_0$ on $ M\times T_R^n\times T^n$, where $\operatorname{pr}_1\colon  M\times T_R^n\times T^n\to M$ and $\operatorname{pr}_2\colon  M\times T_R^n\times T^n \to T_R^n\times T^n$ are the projections defined by $\operatorname{pr}_1(x,p,q)=x$ and $\operatorname{pr}_2(x,p,q)=(p,q)$.
Then for any Hamiltonian function $F\colon S^1\times M\times U_\epsilon\to\mathbb{R}$ with compact support such that $\mathcal{P}(F;(0_M,e))=\emptyset$,
\[c([M\times T^n_R\times T^n],F)<2\sum_{i=1}^n R_i\cdot |e_i|+\max\{0,\lambda(m+n)\}.\]

\end{prop}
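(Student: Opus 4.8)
The plan is to trade the non-contractible orbits for the Oh--Schwarz spectral invariant $c([W],\cdot)$ of the fundamental class of $W:=M\times T^n_R\times T^n$ by grafting an explicit ``unwinding'' Hamiltonian onto $F$. Note $W$ is a closed symplectic manifold of dimension $2(m+n)$, and since $\pi_2(W)=\pi_2(M)$ it is again $\lambda$-monotone. First I would fix a smooth $\chi\colon I_R^n\to[0,1]$ with $\chi\equiv1$ on $\overline{I_{R(\epsilon)}^n}$ and $\operatorname{supp}\chi\subset I_R^n$, and set $K(x,p,q)=-\chi(p)\sum_{i=1}^ne_ip_i$, viewed as an autonomous Hamiltonian on $W$ with compact support in $M\times I_R^n\times T^n$; then $\max_W|K|<\sum_iR_i|e_i|$ because $|p_i|<R_i$ on $\{\chi>0\}$. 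Since $X_K$ has only $\partial_q$-components, $\phi^t_K$ never moves $p$, so it preserves every set $M\times\{|p_i|<c\}\times T^n$; and on $\{\chi\equiv1\}\supset\operatorname{supp}F$ one has $\phi^t_K(x,p,q)=(x,p,q-te)$, in particular $\phi^1_K=\mathrm{id}$ there.

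I would then analyse the contractible $1$-periodic orbits of $F\sharp K$. Because $\phi^t_F$, being the identity near the boundary of $M\times U_\epsilon$, preserves $M\times U_\epsilon$, and $\phi^t_K$ preserves it too, so does $\phi^t_{F\sharp K}=\phi^t_F\phi^t_K$; hence every contractible orbit of $F\sharp K$ lies entirely in $M\times U_\epsilon$ or entirely in its complement. On the complement $F=0$ and $(\phi^t_F)^{-1}=\mathrm{id}$, so $F\sharp K=K$ there, and the contractible orbits are exactly the constant orbits of $K$, whose actions, capped by spheres, lie in $(-\sum_iR_i|e_i|,\sum_iR_i|e_i|)+\lambda\cdot c_1(\pi_2(M))$. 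A contractible orbit of $F\sharp K$ inside $M\times U_\epsilon$ passes through a fixed point $z_0$ of $\phi^1_{F\sharp K}=\phi^1_F$ and equals $t\mapsto\phi^t_F(z_0-(0,0,te))$; lifting $\phi^t_F$ to the $\mathbb{R}^n$-cover of $W$ in the $q$-variables and using that deck transformations commute with lifts, one computes that its $q$-winding number is $w_q-e$, where $w_q$ is the $q$-winding of the $F$-orbit $t\mapsto\phi^t_F(z_0)$, while its $M$- and $p$-components agree with those of that $F$-orbit. Thus an orbit of $F\sharp K$ inside $M\times U_\epsilon$ is contractible in $W$ if and only if the corresponding $F$-orbit lies in the class $(0_M,e)$. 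Since $\mathcal{P}(F;(0_M,e))=\emptyset$, there are none, so \emph{every} contractible $1$-periodic orbit of $F\sharp K$ is a constant orbit of $K$.

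Next I would estimate $c([W],F\sharp K)$ and conclude. By spectrality $c([W],F\sharp K)$ is the action of a capped contractible orbit occurring in a cycle representing the fundamental class; by the previous step this is a constant orbit at a critical point of $K$ together with a capping sphere $u$, so $c([W],F\sharp K)=K(z(0))-\lambda c_1(u)$ with $|K(z(0))|<\sum_iR_i|e_i|$. The monotone Floer--PSS grading then controls $c_1(u)$: the fundamental class sits in Conley--Zehnder degree $-(m+n)$, whereas a constant orbit of $K$ at a Morse index $\mu\le2(m+n)$ critical point, capped by $u$, sits in degree $(m+n)-\mu-2c_1(u)$ up to the bounded shift produced by the non-$C^1$-small linearised flow of $K$; this forces $-\lambda c_1(u)\le\max\{0,\lambda(m+n)\}$, so $c([W],F\sharp K)<\sum_iR_i|e_i|+\max\{0,\lambda(m+n)\}$. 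Finally, $F$ and $(F\sharp K)\sharp\bar K$ generate the same Hamiltonian isotopy and, since $\operatorname{mean}(\bar K_t)=-\operatorname{mean}(K_t)$, have the same mean value on $W$; hence $c([W],F)=c([W],(F\sharp K)\sharp\bar K)$ by the homotopy-invariance and shift properties, and the triangle inequality of Proposition~\ref{Oh} gives $c([W],F)\le c([W],F\sharp K)+c([W],\bar K)$. Since $\bar K=-K$ is autonomous, $c([W],\bar K)\le\max_W(-K)<\sum_iR_i|e_i|$, and adding the two estimates yields $c([W],F)<2\sum_iR_i|e_i|+\max\{0,\lambda(m+n)\}$.

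The real work is the spectral estimate for $F\sharp K$. The difficulty is that $F\sharp K$ is large — it equals $K$ — precisely on the region carrying its contractible orbits, so one cannot bound $c([W],F\sharp K)$ by a sup-norm estimate; instead one must genuinely analyse the monotone Floer complex of $F\sharp K$, determine the Conley--Zehnder indices (hence the admissible capping spheres) of the very degenerate constant orbits of the autonomous, non-small Hamiltonian $K$, and verify that the fundamental class is represented using only these generators without invoking cappings of very negative area, so that the only index-forced surcharge is $\max\{0,\lambda(m+n)\}$. The remaining ingredients — the winding computation, the invariance of $M\times U_\epsilon$ under the flow, and the triangle-inequality bookkeeping — are routine.
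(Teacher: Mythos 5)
Your overall strategy is the same as the paper's: graft an ``unwinding'' Hamiltonian onto $F$ so that the orbits in the class $(0_M,e)$ become the contractible orbits of the composite, and then bound the spectral invariant of the fundamental class by controlling the actions and Conley--Zehnder indices of the remaining contractible (constant) orbits under monotonicity, finishing with the Lipschitz/homotopy-invariance bookkeeping. The winding computation and the $2\sum R_i|e_i|+\max\{0,\lambda(m+n)\}$ arithmetic match the paper. But there is a genuine gap exactly where you yourself flag ``the real work'': your unwinding Hamiltonian $K(x,p,q)=-\chi(p)\sum e_ip_i$ is hopelessly degenerate. Its critical set contains the entire full-dimensional region where $\chi\equiv 0$, plus an uncontrolled positive-dimensional locus in the transition region $\{0<\chi<1\}$ where $d\bigl(\chi(p)\sum e_ip_i\bigr)=0$; so ``a constant orbit of $K$ at a Morse index $\mu$ critical point'' is not defined, non-degenerate spectrality does not apply to $F\sharp K$, and, most seriously, the identity $\operatorname{ind}_{\mathrm{CZ}}([x,c_x])=(m+n)-\operatorname{ind}_{\mathrm{Morse}}(x)$ that you need to force $-(m+n)\le c_1(u)\le 0$ only holds when $dK$ is $C^1$-small near $x$. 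In the transition region $dK$ is of size $R\cdot|e|/\epsilon$, and the resulting shift of the Conley--Zehnder index of a constant orbit relative to its Morse-theoretic value is precisely what would destroy the bound $-\lambda c_1(u)\le\max\{0,\lambda(m+n)\}$; writing ``up to the bounded shift produced by the non-$C^1$-small linearised flow'' concedes the point without resolving it.

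The paper closes exactly this gap with Lemma \ref{function on torus}: instead of a cut-off linear function it builds, on each factor $T^1_{R_i}\times T^1$, a globally defined Morse function $H^{R_i,\epsilon,e_i}=u^{R_i,\epsilon,e_i}\circ H\circ f$ (height function of an embedded torus in $\mathbb{R}^3$, composed with a strictly decreasing reparametrisation) that equals $-e_ip_i$ on $U_\epsilon$, has exactly four critical points, satisfies $\|H^{R_i,\epsilon,e_i}\|<(R_i-\epsilon)|e_i|$, has only constant contractible orbits (via the Yorke estimate), and --- crucially --- has $dH^{R_i,\epsilon,e_i}$ $C^1$-small near its critical points so that $\operatorname{ind}_{\mathrm{Morse}}=\operatorname{ind}_{\mathrm{CZ}}$ there; a $C^2$-small Morse function $\rho\cdot G$ on the $M$-factor then makes the total Hamiltonian non-degenerate. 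Your proof would be complete if you replaced $K$ by such a construction (or supplied an equivalent one); as written, the index estimate controlling the capping class $c_1(u)$, and hence the entire surcharge $\max\{0,\lambda(m+n)\}$, is unjustified. A secondary, fixable issue: your free-homotopy claim for orbits of $\phi^t_F\phi^t_K$ inside $M\times U_\epsilon$ is best justified by the standard based homotopy from $t\mapsto\phi^t_F\phi^t_K(z_0)$ to the concatenation of $\gamma_K^{z_0}$ and $\gamma_F^{z_0}$ (which is what the paper's time-concatenated Hamiltonian $L$ makes literal), rather than by the lifting argument as sketched.
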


To prove Proposition \ref{Irie and Seyfaddini}, we use the following proposition.



\begin{prop}\label{cL}
Let $W$ be an open subset of a $2w$-dimensional connected closed $\lambda$-monotone symplectic manifold $(\hat{W},\omega)$ and $\alpha\in[S^1,\hat{W}]$ a non-trivial homotopy class of free loops on $\hat{W}$.
Assume that a Hamiltonian function $H\colon \hat{W}\to\mathbb{R}$ satisfies the following conditions.
\begin{itemize}
\item for any point $x$ in $W$, $\phi_H^1(x)=x$ and $[\gamma_H^x]=-\alpha$,
\item $H$ is a Morse function and any contractible trajectory of the Hamiltonian isotopy $\{\phi_H^t\}$ is constant \textit{i.e.} $\mathrm{ev}(\mathcal{P}(H;0_{\hat{W}}))=\operatorname{Crit}(H)$,
\item $\operatorname{ind}_{\mathrm{Morse}}(x)=\operatorname{ind}_{\mathrm{CZ}}([x,c_x])$ for any point $x$ in $\mathrm{Crit}(H)$,
\end{itemize}
where $0_{\hat{W}}$ denotes the class of constant loops in $\hat{W}$.

Then for any Hamiltonian function $F\colon S^1\times W\to\mathbb{R}$ with compact support such that $\mathcal{P}(F;\alpha)=\emptyset$,
 \[c([\hat{W}],F)\leq 2||H||_{C^0}+\max\{0,\lambda w\},\]
where $||H||_{C^0}=\max_{x\in \hat{W}}|H(x)|$.

\end{prop}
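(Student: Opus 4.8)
The plan is to compare $F$ with the auxiliary Hamiltonian $H$ by means of the triangle inequality, and then to show that the composed Hamiltonian $H\sharp F$ has no contractible $1$-periodic orbits other than the constant orbits at $\operatorname{Crit}(H)$, whose actions are controlled by $\|H\|_{C^0}$ and the monotonicity constant. The hypothesis $\mathcal P(F;\alpha)=\emptyset$ will be used precisely to rule out the orbits of $H\sharp F$ that could "wrap around" in the class $\alpha$.

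First I would use that $H$ is autonomous: then the inverse Hamiltonian $\bar H$ (generating $\{(\phi_H^t)^{-1}\}$) equals $-H$ by conservation of energy, so $\bar H\sharp H\equiv 0$ and hence $\bar H\sharp(H\sharp F)=F$ as functions. Since the fundamental class $[\hat W]$ is the unit of the quantum product, $[\hat W]\ast[\hat W]=[\hat W]$, so the triangle inequality of Proposition \ref{Oh} gives $c([\hat W],F)\le c([\hat W],\bar H)+c([\hat W],H\sharp F)$. For the first term I would invoke the standard fundamental-class estimate $c([\hat W],K)\le\int_0^1\max_{\hat W}K_t\,dt$ (a basic property of the Oh--Schwarz invariants, see \cite{Oh}); applied to $K=-H$ it gives $c([\hat W],\bar H)\le-\min_{\hat W}H\le\|H\|_{C^0}$. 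It then remains to prove $c([\hat W],H\sharp F)\le\|H\|_{C^0}+\max\{0,\lambda w\}$.

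Put $G=H\sharp F$ and recall $\operatorname{supp}F\subset S^1\times W$. I would first record that $\operatorname{Crit}(H)\subset\hat W\setminus W$, since a critical point $x\in W$ would make $\gamma_H^x$ a constant loop, contradicting $[\gamma_H^x]=-\alpha\neq 0$; consequently $G=H$ near $\operatorname{Crit}(H)$, and the constant orbits there are nondegenerate $1$-periodic orbits of $G$. The crux is that these are the only contractible ones. Let $z$ be a contractible $1$-periodic orbit of $G$, set $x=z(0)$, and write $z(t)=\phi_H^t(y(t))$ with $y(t)=\phi_F^t(x)$. If $x\in W$, then every $\phi_F^t$ preserves $W$ (being generated by a Hamiltonian supported in $W$) and $\phi_H^1|_W=\mathrm{id}$, so $z(1)=\phi_H^1(y(1))=y(1)$, and periodicity forces $y(1)=y(0)$, i.e. $y$ is a $1$-periodic orbit of $F$; a free-homotopy computation — homotope $z$ by letting the $\phi_H^t$-factor run first and slowing the $y$-factor down to a constant, using that $\{\phi_H^t\}$ closes up to the identity exactly on $W$ — then shows $[z]=[\gamma_H^{y(0)}]\cdot[y]$ in $\pi_1(\hat W,y(0))$, so contractibility of $z$ forces the free homotopy class of $y$ to be $\alpha$, contradicting $\mathcal P(F;\alpha)=\emptyset$. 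Hence $x\notin W$, so $x\notin\operatorname{supp}F$, $y\equiv x$, $z=\gamma_H^x$ is a contractible $1$-periodic orbit of $H$, and by hypothesis $z$ is constant at a critical point of $H$.

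It follows that the generators of $CF_{2w}(G)$ are the capped orbits $[x,c_x\sharp A]$ with $x\in\operatorname{Crit}(H)$, $A\in\Gamma$, and $\operatorname{ind}_{\mathrm{CZ}}([x,c_x\sharp A])=\operatorname{ind}_{\mathrm{Morse}}(x)-2c_1(A)=2w$, where I used the hypothesis $\operatorname{ind}_{\mathrm{Morse}}(x)=\operatorname{ind}_{\mathrm{CZ}}([x,c_x])$ together with the recapping formula for $\operatorname{ind}_{\mathrm{CZ}}$. Since $0\le\operatorname{ind}_{\mathrm{Morse}}(x)\le 2w$, this forces $-w\le c_1(A)\le 0$; and since $G=H$ near $x$ and $(\hat W,\omega)$ is $\lambda$-monotone,
\[\mathcal A_G([x,c_x\sharp A])=\int_0^1G(t,x)\,dt-\int_A\omega=H(x)-\lambda c_1(A)\le\|H\|_{C^0}+\max\{0,\lambda w\}.\]
Any cycle in $CF_{2w}(G)$ representing $\Phi([\hat W])\in HF_{2w}(G)$ therefore has action level at most the right-hand side, so $c([\hat W],G)\le\|H\|_{C^0}+\max\{0,\lambda w\}$, and combined with the estimate for $c([\hat W],\bar H)$ this yields the proposition. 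I expect the main obstacle to be the homotopy-theoretic argument of the previous paragraph — verifying rigorously that a contractible orbit of $G$ meeting $W$ produces an $F$-orbit in the free class $\alpha$, given that $\{\phi_H^t\}$ is a loop only over $W$ and not over $\hat W$; one should also note that degeneracy of $F$ (or of $G$) is handled by perturbing $F$ within the stated hypotheses and using the Lipschitz property of $c(a,\cdot)$.
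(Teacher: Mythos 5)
Your proof is correct and is essentially the paper's argument: both compose $F$ with the auxiliary Hamiltonian $H$ whose $W$-orbits lie in $-\alpha$, use $\mathcal{P}(F;\alpha)=\emptyset$ to show the composite Hamiltonian has no contractible $1$-periodic orbits other than the constants at $\operatorname{Crit}(H)$, bound their actions by $\|H\|_{C^0}+\max\{0,\lambda w\}$ via the recapping/monotonicity computation, and transfer the estimate back to $F$ at an additional cost of $\|H\|_{C^0}$. The differences are only in bookkeeping: you use the simultaneous product $H\sharp F$ together with the triangle inequality and the fundamental-class estimate for $c([\hat{W}],\bar{H})$, where the paper uses a time-concatenated Hamiltonian $L$ together with the Lipschitz and homotopy-invariance properties, and you bound every top-degree generator rather than invoking spectrality to locate the value of $c([\hat{W}],\cdot)$ at a particular capped orbit.
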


The idea of using a Hamiltonian function $H$ satisfying the above conditions comes from Irie's paper \cite{Ir} (see also \cite{K}).
Seyfaddini's techniques of using the monotonicity assumption \cite{S} is also useful in our proof.
\begin{proof}
To give an upper bound of the spectral invariant associated to $F$, we consider the concatenation of $\phi_F^1$ and a Hamiltonian diffeomorphism $\phi_H^1$ with trajectories in $-\alpha$.
We can choose a smooth function $\chi\colon[0,\frac{1}{2}]\to[0,1]$ satisfying the following conditions.
\begin{itemize}
\item $\frac{\partial\chi}{\partial t}(t)\geq0$ for any $t\in{[0,\frac{1}{2}]}$, and
\item $\chi(t)=0$ for any $t\in[0,\frac{1}{5}]$ and $\chi(t)=1$ for any $t\in{[\frac{2}{5},\frac{1}{2}]}$.
\end{itemize}
Let  $L \colon S^1\times \hat{W}\to\mathbb{R}$ be a Hamiltonian function defined by
\begin{equation*}
L(t,x)=
\begin{cases}
\frac{\partial\chi}{\partial t}(t)H(\chi(t),x) & \text{when }t\in[0,\frac{1}{2}], \\
\frac{\partial\chi}{\partial t}(t-\frac{1}{2})F(\chi(t-\frac{1}{2}),x)& \text{when }t\in[\frac{1}{2},1].
\end{cases}
\end{equation*}
We claim
 \[c([\hat{W}],L)\leq ||H||_{C^0}+\max\{0,\lambda w\}.\]

Let $[z,u]$ be an element of $\tilde{\mathcal{P}}(H)$ and define $x$ by $x=\operatorname{ev}(z)$.
If $x\in W$, by the assumption on $H$, $[\gamma_H^x]=\mathcal{L}_{-\alpha}(W)$.
Since the path $\gamma_{L}^x$ is the concatenation of the paths $\gamma_H^x$ and $\gamma_F^{\phi_H(x)}$ up to parameter change, $\mathcal{P}(F;\alpha)=\emptyset$ implies 
$\gamma_L^x\notin \mathcal{L}_{0_{\hat{W}}}(\hat{W})$ for any $x\in W$.
If $x\notin W$, then $\phi_H(x)\notin  W$.
Thus $\gamma_L^x$ is equal to $\gamma_H^x$ up to parameter change and $\int_0^1H(t,\gamma_H^x(t))dt=\int_0^1L(t,\gamma_L^x(t))dt$.
Therefore we see that there exists a natural inclusion map $\iota\colon  \tilde{\mathcal{P}}(L)\to \tilde{\mathcal{P}}(H)$ which preserves values of the action functional and the Conley-Zehnder indices.

We give an estimate of the critical value of the action functional $\mathcal{A}_L$ attained by the fundamental class.
Since every element of $\mathcal{P}(H;0_{\hat{W}})$ is a constant loop, every element of $\mathcal{P}(L;0_{\hat{W}})$ is also a constant loop.
Since  $\mathcal{P}(L;0_{\hat{W}})$ is a finite set and $(\hat{W},\omega)$ is monotone, $\mathcal{A}_L(\tilde{\mathcal{P}}(H))$ is a discrete subset of $\mathbb{R}$.
For any element $[z,u]$ of $\tilde{\mathcal{P}}(L)$ which represents $\Phi([\hat{W}])$,  $\operatorname{ind}_{\mathrm{CZ}}([z,u])=w-2w=-w$.
Since every element of $\mathcal{P}(L;0_{\hat{W}})$ is a constant loop, there exists a point $x$ in $\hat{W}$ and $A\in\Gamma$ such that $\operatorname{ind}_{\mathrm{CZ}}([x,c_x\sharp A])=2w$ and $c([\hat{W}],L)=\mathcal{A}_L([x,c_x\sharp A])$.
Then, by the assumption,
\begin{align*}
&\operatorname{ind}_{\mathrm{Morse}}(x)+2c_1(A)\\
      &=(w-\operatorname{ind}_{\mathrm{CZ}}([x,c_x]))+2c_1(A)\\
      & =w-\operatorname{ind}_{\mathrm{CZ}}([x,c_x\sharp A])\\
      & =-w-(-w)=0.
\end{align*}
Since $0\leq\operatorname{ind}_{\mathrm{Morse}}(x)\leq2w$,
\[-w\leq c_1(A)\leq 0.\]
Since $\iota$ preserves values of the action functional,
\begin{align*}
\mathcal{A}_L([x,c_x\sharp A])&=\mathcal{A}_H([x,c_x\sharp A])\\
& =H(x)-\omega(A)\\
      & =H(x)-\lambda c_1(A),
\end{align*}
Thus, by $-w\leq c_1(A)\leq 0$, $c([\hat{W}],L)\leq||H||_{C^0}+\max\{0,\lambda w\}$.
By $||\bar{H}||_{C^0}=||H||_{C^0}$, the Lipschitz property and the homotopy invariance for spectral invariants (Proposition \ref{Oh} (1) and (2)) imply
\begin{align*}
c([\hat{W}],F)& \leq c([\hat{W}],L)+||\bar{H}||_{C^0}\\
&\leq(||H||_{C^0}+\max\{0,\lambda w\})+||H||_{C^0}\\
            & =2||H||_{C^0}+\max\{0,\lambda w\}.
\end{align*}
\end{proof}

To prove Proposition \ref{Irie and Seyfaddini}, we construct the Hamiltonian function $H$ in Proposition \ref{cL} by using $H^{R,\epsilon,e}$ given by the following lemma.
Let $0_T$ denote the free homotopy class of constant loops in $T^n_R\times T^n$.
\begin{lem}\label{function on torus}
Let $R$, $\epsilon$ be positive real numbers such that $2\epsilon <R$. 
Let $w_1$, $w_2$, $w_3$ and $w_4$ denote points $(R-\epsilon,0)$, $(R-\epsilon,\frac{1}{2})$, $(R+\epsilon,0)$ and $(R+\epsilon,\frac{1}{2})$ in $T^1_R\times T^1$, respectively.
For any non-zero integer $e$, there exists a Hamiltonian function $H^{R,\epsilon,e}\colon T^1_R\times T^1\to\mathbb{R}$ which satisfies the following conditions.

\begin{itemize}
\item  $H^{R,\epsilon,e}(p,q)=-ep$ on $U_\epsilon=(-R+2\epsilon,R-2\epsilon)\times T^1$,
\item $\operatorname{Crit} (H^{R,\epsilon,e})=\{w_1,w_2,w_3,w_4\}$,
\item $H^{R,\epsilon,e}$ is a Morse function,
\item $||H^{R,\epsilon,e}||_{L^\infty}<(R-\epsilon)\cdot |e|$,
\item $\operatorname{ev}(\mathcal{P}(H^{R,\epsilon,e};0_T))=\operatorname{Crit}(H^{R,\epsilon,e})$,
\item $\operatorname{ind}_{\mathrm{Morse}}(w_i)=\operatorname{ind}_{\mathrm{CZ}}([w_i,c_{w_i}])$ for any $i\in\{1,2,3,4\}$.
\end{itemize}
Here $\operatorname{Crit}(H^{R,\epsilon,e})$ is the set of critical points of $H^{R,\epsilon,e}$.
\end{lem}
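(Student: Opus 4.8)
I would construct $H^{R,\epsilon,e}$ (written $H$ below) almost explicitly; I treat the case $e>0$, the case $e<0$ being entirely analogous with the roles of minima and maxima interchanged. On $U_\epsilon$ the value $H=-ep$ is forced, and there it generates the flow $(p,q)\mapsto(p,q-et)$, which has no fixed point and whose trajectories wind $-e$ times around the $q$-circle. What remains is to extend $-ep$ smoothly over the complementary closed annulus $C=\{(p,q)\in T^1_R\times T^1:R-2\epsilon\le|p|\le R\}$. I would parametrize $C$ by $(s,q)\in[-2\epsilon,2\epsilon]\times T^1$ via $p\equiv R+s\pmod{2R}$, so that the circles $s=\pm2\epsilon$ glue to $\partial U_\epsilon$ and, in these coordinates, $w_1,w_2$ are the points with $s=-\epsilon$ and $q\in\{0,\frac{1}{2}\}$ while $w_3,w_4$ are those with $s=\epsilon$. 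On $C$ I would look for $H(s,q)=A(s)+B(s)\cos(2\pi q)$ with $A,B\in C^\infty([-2\epsilon,2\epsilon])$. Writing $P=A+B$ and $N=A-B$, the function $H$ restricts to $P$ along $q=0$ and to $N$ along $q=\frac{1}{2}$, and a short computation shows that on the region where $B>0$ the critical points of $H$ are exactly those over $q\in\{0,\frac{1}{2}\}$, where the Hessian is diagonal, corresponding to the critical points of $P$ (for $q=0$) and of $N$ (for $q=\frac{1}{2}$), whereas where $B=0$ one has $\partial_s H=A'\neq0$ and there are none.

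To choose $A$ and $B$ I would first fix the boundary behaviour so that the formula glues smoothly with $-ep$: near $s=-2\epsilon$ put $B\equiv0$, $A(s)=-e(R+s)$; near $s=2\epsilon$ put $B\equiv0$, $A(s)=e(R-s)$; this gives $A(\mp2\epsilon)=\mp e(R-2\epsilon)$ and $A'(\pm2\epsilon)=-e$. Next I would take $P$ and $N$ to be Morse functions each with exactly two critical points, a nondegenerate minimum at $s=-\epsilon$ and a nondegenerate maximum at $s=\epsilon$ (with these boundary values and negative boundary slopes there is no profile with fewer turning points), with $N=P-2B$ for a nonnegative $C^2$-small bump $B$ that vanishes near $s=\pm2\epsilon$, is positive on an interval containing $\pm\epsilon$, and is stationary at $s=\pm\epsilon$; then $\operatorname{Crit}(H)=\{w_1,w_2,w_3,w_4\}$ with diagonal Hessians, and $B(\pm\epsilon)>0$ makes $w_1,w_4$ saddles, $w_2$ a minimum and $w_3$ a maximum, so $H$ is a Morse function. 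Finally I would arrange that $P$ and $N$ undershoot $-e(R-2\epsilon)$, resp.\ overshoot $e(R-2\epsilon)$, by less than $e\epsilon$; this is possible because these excursions take place over an interval of length $\epsilon$ on which the relevant derivative has modulus $\le e$ and vanishes at the far endpoint, so a mild deceleration suffices. Then $H(s,q)\in[N(s),P(s)]$ on $C$ while $|H|=e|p|<e(R-2\epsilon)$ on $U_\epsilon$, so $\|H\|_{L^\infty}<(R-\epsilon)|e|$.

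The two dynamical conditions carry the weight of the argument. On $U_\epsilon$ every trajectory winds $-e\neq0$ times in $q$, hence is non-contractible; on $C$, where $B=0$ one has $\dot s=0$ and $\dot q=-e$, so the circles $\{s=\text{const}\}$ in $\{B=0\}$ are invariant and carry winding trajectories, and the two circles bounding $\{B>0\}$ (where $\dot s=0$ as well) are invariant too; hence $\{B>0\}$ is an invariant essential annulus and every non-constant contractible trajectory $\gamma$ lies in it. Such a $\gamma$ bounds a disk $D$ there, and since its boundary is a periodic orbit of $X_H$, Poincar\'e--Hopf forces the indices of the zeros of $X_H$ inside $D$ to sum to $1$; as those zeros lie in $\{w_1,w_2,w_3,w_4\}$ with saddles counted $-1$ and $w_2,w_3$ counted $+1$, $D$ contains $w_2$ or $w_3$. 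Because $H(w_1),H(w_2)<-e(R-2\epsilon)<e(R-2\epsilon)<H(w_3),H(w_4)$, the closed orbits surrounding $w_2$ occupy the levels between $H(w_2)$ and $H(w_1)$ and those surrounding $w_3$ the levels between $H(w_4)$ and $H(w_3)$, two disjoint ranges, so $\gamma$ surrounds exactly one centre and is confined to an arbitrarily small neighbourhood of it, on which $H$ is $C^2$-close to a definite quadratic form; hence the period of $\gamma$ is close to $(N''(-\epsilon)B(-\epsilon))^{-1/2}$ (for $w_2$) or $(|P''(\epsilon)|B(\epsilon))^{-1/2}$ (for $w_3$), which can be made $>1$ by shrinking $B$ — a contradiction. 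So $\mathcal{P}(H;0_T)$ consists precisely of the four constant trajectories, i.e.\ $\operatorname{ev}(\mathcal{P}(H;0_T))=\operatorname{Crit}(H)$.

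There remains the condition $\operatorname{ind}_{\mathrm{Morse}}(w_i)=\operatorname{ind}_{\mathrm{CZ}}([w_i,c_{w_i}])$, and this I expect to be the main obstacle. For it I would compute the linearized return map $d(\phi^1_H)_{w_i}$ at each $w_i$ and select the local data $P''(\pm\epsilon),N''(\pm\epsilon),B(\pm\epsilon)$ so that its Conley--Zehnder index equals the Morse index of $w_i$; this is precisely the point at which $H$ must be chosen unlike a $C^1$-small Morse function near the $w_i$, following the idea of \cite{Ir} (and, later in the proof of Proposition \ref{Irie and Seyfaddini}, Seyfaddini's monotonicity device of \cite{S}). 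The difficulty is that this must not upset the previous condition: enlarging the linearized rotation at an elliptic critical point in order to change its Conley--Zehnder index tends to create a resonant period-$1$ orbit nearby, so the profiles of $A$ and $B$ near $s=\pm\epsilon$ and the overall scale of $B$ have to be coordinated to meet both requirements at once. Once $A$ and $B$ are fixed, the rest — the smooth gluing, the count and nondegeneracy of the critical points, the Morse property, and the $L^\infty$ bound — is routine.
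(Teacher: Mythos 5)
Your construction on the complementary annulus --- $H=A(s)+B(s)\cos(2\pi q)$ with a small bump $B$ --- is genuinely different from the paper's (which takes $u\circ z\circ f$ for the height function $z$ of a torus of revolution and a monotone reparametrization $u$ whose derivative is made $C^1$-small on the two caps $2\le|z|\le 4$), and the routine items (gluing, the four nondegenerate critical points, the $L^\infty$ bound) check out. The first genuine gap is in your argument for $\operatorname{ev}(\mathcal{P}(H;0_T))=\operatorname{Crit}(H)$. The claim that a contractible orbit around $w_2$ is ``confined to an arbitrarily small neighbourhood of it'' because its energy lies in the interval $(H(w_2),H(w_1))$ of length $2B(-\epsilon)$ is false: since $H(-\epsilon,q)=A(-\epsilon)+B(-\epsilon)\cos(2\pi q)\le H(w_1)$ for every $q$, the region between those two levels contains the entire circle $\{s=-\epsilon\}$, and the orbits close to the homoclinic separatrix through $w_1$ sweep out essentially the whole $q$-circle before turning back. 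For those orbits the linearization at the centre is irrelevant and the period estimate $(N''(-\epsilon)B(-\epsilon))^{-1/2}$ does not apply. The conclusion is very likely still true for $B$ small --- this is a perturbed pendulum, one has $|\dot q|=|A'(s)+B'(s)\cos 2\pi q|=O(\sqrt{B})$ on the $O(\sqrt{B})$-thin strip containing all such orbits, and each orbit either traverses a definite $q$-interval (long period by the $\dot q$ bound) or stays in the linearized regime (long period by your frequency computation) --- but that dichotomy must actually be argued; as written the step fails. The paper sidesteps all of this: its Hamiltonian is a function of the height alone, so every contractible orbit lies in a cap where the Hamiltonian vector field has small Lipschitz constant, and Yorke's lower bound $2\pi/\mathrm{Lip}(X_H)$ on minimal periods finishes the claim in one line.

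The second gap is the index condition, which you explicitly leave open as ``the main obstacle.'' In fact your construction already delivers it once $B$ is $C^2$-small, and for the same reason the periods are long: at $w_1,w_4$ the linearized flow is hyperbolic, and at $w_2,w_3$ the Hessian is $\mathrm{diag}(\pm N''(\mp\epsilon)\text{ or }\pm P''(\pm\epsilon),\,\pm 4\pi^2 B(\mp\epsilon))$, so $d\phi_H^t|_{w_i}$, $t\in[0,1]$, is a rotation by the angle $2\pi/T_{\mathrm{lin}}$ with $T_{\mathrm{lin}}$ the linearized period you computed; requiring $T_{\mathrm{lin}}>1$ is exactly the requirement that this rotation be less than a full turn, in which case the local Conley--Zehnder index is the one of a critical point of a $C^2$-small Morse function, which is all the last bullet asks for (this is precisely how the paper gets it, from $C^1$-smallness of $dH^{R,\epsilon,e}$ near $\operatorname{Crit}$). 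What matters is not the $C^1$-size of $dH$ near $w_i$ (which in your construction is not small, since $A''$ is of order $e/\epsilon$) but the absence of a full rotation of the linearized flow, and the smallness of $B$ gives that; the two requirements you feared were in tension are in fact the same requirement. Without this observation, your proposal establishes only four of the six listed properties.
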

\begin{proof}
We realize a 2-torus $T^2$ in $\mathbb{R}^3$ as
\[T^2=\{(x,y,z)\in\mathbb{R}^3; (\sqrt{x^2+z^2}-3)^2+y^2=1\}.\]
Define the (time-independent) Hamiltonian function $H\colon T^2\to\mathbb{R}$ by $H(x,y,z)=z$.
Note that the set of critical points of $H$ is 
\[\{(0,0,2), (0,0,4), (0,0,-2), (0,0,-4)\}.\]
We can take a diffeomorphism $f\colon T^1_R\times T^1\to T^2$ which maps $w_1, w_2, w_3$ and $w_4$ to $(0,0,2), (0,0,4)$, $(0,0,-2)$ and $(0,0,-4)$, respectively and satisfies $H(f(p,q))=\frac{p}{R}$ for any $p\in I_{R(2\epsilon)}$.
Let $u^{R,\epsilon,e}\colon \mathbb{R}\to\mathbb{R}$ be a function such that
\begin{itemize}
\item  $du^{R,\epsilon,e}(x)< 0$ for any real number $x$,
\item  $u^{R,\epsilon,e}(x)=-eRx$ if $|x|\leq 1-\frac{2\epsilon}{R}$,
\item $|u^{R,\epsilon,e}(x)|<(R-\epsilon)\cdot |e|$ if $|x|<4$,

\end{itemize}
Define the Hamiltonian function $H^{R,\epsilon,e}\colon T^1_R\times T^1\to\mathbb{R}$ by $H^{R,\epsilon,e}=u^{R,\epsilon,e}\circ H\circ f$.
Assume that $(du^{R,\epsilon,e})_x$ is sufficiently $C^1$-small for any $x$ with $2\leq |x|\leq 4$.
Then the Yorke estimate (\cite{Y}) implies $\operatorname{ev}(\mathcal{P}(H^{R,\epsilon,e};0_T))=\operatorname{Crit}(H^{R,\epsilon,e})$.
Since $2\leq|H(f(w_i))|\leq4$ for any $i$, $dH^{R,\epsilon,e}$ is sufficiently $C^1$-small near $\operatorname{Crit}(H^{R,\epsilon,e})$ and  hence $\operatorname{ind}_{\mathrm{Morse}}(w_i)=\operatorname{ind}_{\mathrm{CZ}}([w_i,c_{w_i}])$ for any $i$.
\end{proof}

\begin{proof}[Proof of Proposition \ref{Irie and Seyfaddini}]
When $e=0$, our proposition immediately follows from the Arnold conjecture.
Thus we may assume $e\neq0$.

To use Proposition \ref{cL}, we construct the Hamiltonian function $H$.
Define the Hamiltonian function $H^\prime\colon T^n_R\times T^n\to\mathbb{R}$ by
\[H^\prime(p,q)=\sum_{i=1}^nH^{R_i,\epsilon_i,e_i}(p_i,q_i).\]
Then $\gamma_{H^\prime}^x \in \mathcal{L}_{-e} (T^n_R\times T^n)$ for any $x\in U_\epsilon$.
Thus we can take a neighborhood $W$ of $U_\epsilon$ such that
\[\operatorname{ev}(\mathcal{P}(H^\prime;(0_M,0_T)))\cap \bar{W}=\emptyset.\]
In order to compute the spectral invariant, we take a non-degenerate perturbation of $H^\prime$.
Let $\rho\colon T^n_R\times T^n\to[0,1]$ be a function such that
\begin{equation*}
\rho(p,q)=
\begin{cases}
1 & \text{for any }(p,q)\in T^n_R\times T^n\setminus W, \\
0 & \text{for any }(p,q)\in U_\epsilon.
\end{cases}
\end{equation*}
Let $G\colon M\to\mathbb{R}$ be a Morse function and define the Hamiltonian function $H\colon M\times T^n_R\times T^n\to\mathbb{R}$ by
\[H(y,p,q)=H^\prime(p,q)+\rho(p,q)\cdot G(y).\]
If the Morse function $G$ is sufficiently $C^2$-small, then 
\begin{itemize}
\item $\operatorname{ev}(\mathcal{P}(H;(0_M,0_T)))\cap(M\times W)=\emptyset,$ and
\item there exist only finitely many points $y_1, \ldots, y_k$ in $M$ such that $\operatorname{Crit}(G)=\operatorname{ev}(\mathcal{P}(tG;0_M))=\{y_1, \ldots, y_k\}$ for any $t\in(0,1]$.
\end{itemize}
  Thus
  \[\operatorname{ev}(\mathcal{P}(H;(0_M,0_T)))=\{(y_i,(w_{j_1},\ldots,w_{j_n}))\}_{i\in\{1,\ldots,k\}, j_1,\ldots,j_n\in\{1,2,3,4\}}=\operatorname{Crit}(H).\]
  By Lemma \ref{function on torus}, 
  \[\operatorname{ind}_{\mathrm{Morse}}(x)=\operatorname{ind}_{\mathrm{CZ}}([x,c_x]),\]
  for any point $x$ in $\operatorname{Crit}(H)$.

Hence $H$ satisfies the conditions of Proposition \ref{cL} and thus we apply Proposition \ref{cL}.

By Proposition \ref{cL} and $||\bar{H}||_{C^0}=||H||_{C^0}$, the Lipschitz property and the homotopy invariance for spectral invariants (Proposition \ref{Oh} (1) and (2)) imply
\begin{align*}
c([M\times T^n_R\times T^n],F)& \leq 
2||H||_{C^0}+\max\{0,\lambda(m+n)\}\\
            & <2(\sum_{i=1}^n(R_i-\epsilon)\cdot |e_i|+||G||_{C^0})+\max\{0,\lambda(m+n)\},
\end{align*}
If the Morse function $G$ is sufficiently $C^2$-small,
\[c([M\times T^n_R\times T^n],F)<2\sum_{i=1}^nR_i\cdot |e_i|+\max\{0,\lambda(m+n)\}.\]
\end{proof}

To prove Theorem \ref{new characterization of heaviness}, we use the following theorems by Entov and Polterovich (\cite{EP09}).
\begin{thm}[{\cite{EP09} Theorem 1.7}]\label{product of heavy}
Let $(N_1,{\omega}_1)$ and $(N_2,{\omega}_2)$ be closed symplectic manifolds.
Assume that for $i=1,2$, $Y_i$ is a heavy subset of $(N_i,\omega_i)$.
Then the product $Y_1{\times}Y_2$ is a heavy subset of $N_1{\times}N_2$.
\end{thm}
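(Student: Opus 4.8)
The plan is to produce an idempotent of $QH_\ast(N_1\times N_2,\omega_1\oplus\omega_2)$ with respect to which $Y_1\times Y_2$ is heavy. Let $a_i\in QH_\ast(N_i,\omega_i)$ be an idempotent realizing the heaviness of $Y_i$. By the quantum Künneth formula there is a ring isomorphism $QH_\ast(N_1\times N_2)\cong QH_\ast(N_1)\otimes QH_\ast(N_2)$ (the tensor product suitably completed over the Novikov ring) intertwining the quantum products, so $a:=a_1\otimes a_2$ satisfies $a\ast a=(a_1\ast a_1)\otimes(a_2\ast a_2)=a$ and $a\neq 0$; hence $a$ is a nonzero idempotent. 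Using the remark that heaviness may be tested on autonomous Hamiltonians, it then suffices to show
\[
\zeta_a(F)\ \geq\ \inf_{Y_1\times Y_2}F
\]
for every $F\in C^\infty(N_1\times N_2)$. Throughout I use that $\zeta_a$ is monotone, satisfies $\zeta_a(F+c)=\zeta_a(F)+c$ for $c\in\mathbb{R}$, and is $C^0$-continuous, all inherited from Proposition \ref{Oh}.

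Two ingredients are needed. The first is a product formula for split Hamiltonians: writing $(H_1\oplus H_2)(x,y)=H_1(x)+H_2(y)$, one has $\zeta_a(H_1\oplus H_2)=\zeta_{a_1}(H_1)+\zeta_{a_2}(H_2)$. This rests on the observation that, for a split Hamiltonian, the Floer complex of $N_1\times N_2$ is the completed tensor product $CF_\ast(H_1)\otimes CF_\ast(H_2)$, the action functional is additive on capped split orbits, and the PSS isomorphism respects this Künneth decomposition; hence $c(a,H_1\oplus H_2)=c(a_1,H_1)+c(a_2,H_2)$, and passing to the stabilizations $H^{\natural l}$ gives the formula for $\zeta$. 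The second, and decisive, ingredient is a Fubini-type upgrade of this formula to arbitrary $F$: for $F\in C^\infty(N_1\times N_2)$ define $\phi\colon N_1\to\mathbb{R}$ by $\phi(x)=\zeta_{a_2}(F(x,\cdot))$, which is continuous by the Lipschitz property of $\zeta_{a_2}$; fix $\delta>0$ and a smooth $\phi_\delta$ with $\|\phi_\delta-\phi\|_{C^0}\le\delta$. I claim
\[
\zeta_a(F)\ \geq\ \zeta_{a_1}(\phi_\delta)-\delta .
\]
For split $F=H_1\oplus H_2$ one has $\phi\equiv H_1+\zeta_{a_2}(H_2)$, so $\zeta_{a_1}(\phi)=\zeta_a(F)$ by the product formula and the claim holds trivially; this is the consistency check.

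Granting the Fubini inequality, the theorem follows formally. Put $c=\inf_{Y_1\times Y_2}F$. For $x\in Y_1$, the $a_2$-heaviness of $Y_2$ gives $\phi(x)=\zeta_{a_2}(F(x,\cdot))\ge\inf_{Y_2}F(x,\cdot)\ge c$, so $\phi_\delta\ge c-\delta$ on $Y_1$; the $a_1$-heaviness of $Y_1$ then gives $\zeta_{a_1}(\phi_\delta)\ge\inf_{Y_1}\phi_\delta\ge c-\delta$, whence $\zeta_a(F)\ge c-2\delta$. Letting $\delta\to0$ yields $\zeta_a(F)\ge\inf_{Y_1\times Y_2}F$, i.e. $Y_1\times Y_2$ is $a$-heavy. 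The companion statement for superheavy subsets is obtained by the same scheme with all inequalities reversed and $\phi(x)=\zeta_{a_2}(F(x,\cdot))$ bounded above via the fiberwise maximum.

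The main obstacle is the Fubini inequality itself. It cannot be obtained by merely bounding $F$ below by a globally split Hamiltonian and invoking monotonicity: if $F$ is nonnegative on $Y_1\times Y_2$ but dips arbitrarily low on $Y_1\times(N_2\setminus Y_2)$, then any split $G_1\oplus G_2\le F$ forces $G_1$, and hence $\zeta_{a_1}(G_1)\le\max G_1$, to be very negative, so monotonicity alone is far too weak. Instead one must compare the filtered Floer theory of $N_1\times N_2$ directly with that of the two factors: the Künneth splitting persists at chain level, and a filtration argument that degenerates the $N_2$-direction toward a $C^2$-small Morse function while running the $N_1$-direction estimate fiberwise produces the required one-sided bound on $c(a,\cdot)$ before stabilization. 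Making this rigorous is the technical heart of the argument (cf. \cite{EP09}); the remainder of the proof is a formal consequence of the axioms of the partial symplectic quasi-state $\zeta_a$.
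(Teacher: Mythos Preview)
The paper does not prove this theorem; it is quoted from \cite{EP09} (Theorem~1.7 there) and used as a black box in the proof of Theorem~\ref{new characterization of heaviness}. So there is no in-paper argument to compare your proposal against.

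Your proposal correctly identifies the idempotent $a=a_1\otimes a_2$ via the quantum K\"unneth formula, and your third paragraph is a clean and correct formal deduction: granted the Fubini-type inequality $\zeta_a(F)\ge\zeta_{a_1}(\phi_\delta)-\delta$ with $\phi(x)=\zeta_{a_2}(F(x,\cdot))$, heaviness of $Y_1\times Y_2$ follows immediately from heaviness of each factor. The product formula $\zeta_a(H_1\oplus H_2)=\zeta_{a_1}(H_1)+\zeta_{a_2}(H_2)$ for split Hamiltonians is also standard and your justification is adequate.

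The gap is that the Fubini inequality is the entire content of the theorem, and you have not proved it. You yourself explain, correctly, why bounding $F$ below by a split Hamiltonian and invoking monotonicity is hopeless. But the alternative you sketch---``degenerate the $N_2$-direction toward a $C^2$-small Morse function while running the $N_1$-direction estimate fiberwise''---is not a proof: for a non-split $F$ the Floer complex $CF_\ast(F)$ on $N_1\times N_2$ does not split as a tensor product, and it is not explained how the family of \emph{numbers} $\{\zeta_{a_2}(F(x,\cdot))\}_{x\in N_1}$, each computed on a different Hamiltonian on $N_2$, can be reassembled into an action bound for a single cycle representing $\Phi(a_1\otimes a_2)$ in $CF_\ast(F)$. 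No continuation or deformation argument to that effect is supplied. Deferring this step to ``cf.~\cite{EP09}'' is circular, since that is exactly the result being proved. As written, the proposal reduces the theorem to an unproved lemma of comparable difficulty.
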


\begin{thm}[{\cite{EP09} Theorem 1.4}]\label{fundamental heavy}
Let $(N,\omega)$ be a closed symplectic manifold.
Assume that $Y$ is a heavy subset of $(N,\omega)$.
Then $Y$ is $[N]$-heavy.
\end{thm}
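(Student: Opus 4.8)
The plan is to derive Theorem~\ref{fundamental heavy} from a single comparison between the functionals $\zeta_a$: the goal is to show that for every idempotent $a\in QH_\ast(N,\omega)$,
\[
\zeta_{[N]}(H)\ \geq\ \zeta_a(H)\qquad\text{for every }H\in C^\infty(S^1\times N).
\]
Once this is known the theorem follows at once. Indeed, by definition ``$Y$ is heavy'' unpacks to ``$Y$ is $a$-heavy for \emph{some} idempotent $a$'', i.e.\ $\zeta_a(H)\geq\inf_{S^1\times Y}H$ for every $H$; combining this with the displayed inequality gives $\zeta_{[N]}(H)\geq\inf_{S^1\times Y}H$ for every $H$, which is precisely the statement that $Y$ is $[N]$-heavy.

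To prove the comparison I would feed the triangle inequality (Proposition~\ref{Oh}(3)) a trivial factor. Since $[N]$ is the unit of the quantum product one has $a\ast[N]=a$, and since $\phi^t_0=\mathrm{id}$ the definition of $\sharp$ gives $0\sharp H=H$. Hence, applying Proposition~\ref{Oh}(3) with the pair $(a,0)$ in the first slot and $([N],H)$ in the second,
\[
c(a,H)\ =\ c(a\ast[N],\,0\sharp H)\ \leq\ c(a,0)+c([N],H).
\]
Replacing $H$ by $H^{\natural l}$ and dividing by $l$,
\[
\frac{c(a,H^{\natural l})}{l}\ \leq\ \frac{c(a,0)}{l}+\frac{c([N],H^{\natural l})}{l}.
\]
Since $c(a,0)$ is a fixed real number the first term on the right tends to $0$, so letting $l\to\infty$ yields $\zeta_a(H)\leq\zeta_{[N]}(H)$; here one uses that the limits defining $\zeta_a$ and $\zeta_{[N]}$ exist, which is a standard consequence of the properties in Proposition~\ref{Oh} (subadditivity of $l\mapsto c(\,\cdot\,,H^{\natural l})$, via $a\ast a=a$, together with a lower bound on spectral invariants).

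There is no genuine obstacle here: the one idea is to plug the zero Hamiltonian into the triangle inequality, after which everything is formal, resting only on Proposition~\ref{Oh} and the facts that $[N]$ is the unit of $QH_\ast(N,\omega)$ and an idempotent. The points to keep track of are minor --- that $c(a,0)$ is finite (true for every non-zero $a$, since spectral invariants are real-valued), that $\zeta_a$ and $\zeta_{[N]}$ are well defined, and that one must first unpack heaviness as $a$-heaviness for \emph{some} idempotent before applying the comparison. Conceptually, the inequality $\zeta_{[N]}\geq\zeta_a$ records that the functional attached to the fundamental class dominates the one attached to any idempotent, so heaviness detected by any idempotent is automatically detected by $[N]$.
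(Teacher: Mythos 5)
Your argument is correct, and since the paper quotes this statement from \cite{EP09} without reproving it, the right comparison is with the source: your proof is precisely the standard one there, namely applying the triangle inequality with the zero Hamiltonian in one slot and the unit $[N]$ of the quantum product in the other to get $c(a,H)\leq c(a,0)+c([N],H)$, hence $\zeta_a\leq\zeta_{[N]}$ after stabilization, so $a$-heaviness implies $[N]$-heaviness. All the points you flag (finiteness of $c(a,0)$, $0\sharp H=H$, existence of the limits defining $\zeta_a$) are indeed the only things to check, and they hold.
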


\begin{proof}[Proof of Theorem \ref{new characterization of heaviness}]
Fix a positive real number $\epsilon$ such that $\epsilon <\min\{R_1,\ldots,R_n\}$ and take a Hamiltonian function $F\colon S^1\times M\times I^n_{R(\epsilon)}\times T^n\to\mathbb{R}$ with compact support such that $F|_{S^1\times X\times T^n}\geq 2\sum_{i=1}^nR_i\cdot |e_i|+\max\{0,\lambda(m+n)\}$.
Assume $\mathcal{P}(F;(0_M,e))= \emptyset$.
Then, Proposition \ref{Irie and Seyfaddini} and the triangle inequality imply
\[\zeta_{[M\times T^n_{R(\epsilon)}\times T^n]}(F)< 2\sum_{i=1}^nR_i\cdot |e_i|+\max\{0,\lambda(m+n)\}.\]

Note that Example \ref{torus} and Theorem \ref{product of heavy} imply that $X\times T^n$ is a heavy subset.
Since Theorem \ref{fundamental heavy} implies that $X\times T^n$ is $[M\times T^n_{R(\epsilon)}\times T^n]$-heavy,  by Definition \ref{definition of heavy},
\[\zeta_{[M\times T^n_{R(\epsilon)}\times T^n]}(F)\geq 2\sum_{i=1}^nR_i\cdot |e_i|+\max\{0,\lambda(m+n)\}.\]

These two inequalities contradicts.
Since any Hamiltonian function $F\colon S^1\times M\times I^n_R\times T^n\to\mathbb{R}$ with compact support has support in $S^1\times M \times I^n_{R(\epsilon)}\times T^n$ for some $\epsilon$, we complete the proof of Theorem \ref{new characterization of heaviness}.
\end{proof}

As we mentioned in Introduction, Theorem \ref{new characterization of heaviness} gives the inequality 
\[C(\mathbb{C}P^m,\Phi^{-1}(y_0),R;e)\leq2\sum_{i=1}^mR_i\cdot |e_i|.\]

We have another example.
\begin{exam}\label{torus2}
Since $\pi_2(T_R^n\times T^n)=0$, by applying Theorem \ref{capacity statement} to Example \ref{torus}, we attain the inequality $C(T_R^n\times T^n,T^n,R;e)\leq2\sum_{i=1}^mR_i\cdot |e_i|$ for any elements $e=(e_1,\ldots,e_m)$ and $R=(R_1,\ldots,R_m)$ of $\mathbb{Z}^m$ and $(\mathbb{R}_{>0})^m$, respectively.
\end{exam}

\section{Non-contractible trajectories on non-monotone symplectic manifolds}\label{non-monotone}

Some reserches studied the existence problem of non-contractible periodic orbits whose period is not more than 1 (for example, see \cite{GL} and \cite{L}).
When we replace the existence problem of 1-periodic trajectories by  the existence problem of periodic orbits whose period is not more than 1, we have the following result which does not need the assumption of monotonicity.

\begin{thm}\label{new characterization of heaviness2}
Let $X$ be a heavy subset of a connected closed symplectic manifold $(M,\omega)$.
Let $e=(e_1,\ldots,e_n)$ and $R=(R_1,\ldots,R_n)$ be elements of $\mathbb{Z}^n$ and $(\mathbb{R}_{>0})^n$, respectively.
For any (time-independent) Hamiltonian function $F\colon M\times I_R^n\times T^n\to \mathbb{R}$ with compact support such that $F|_{X\times T^n}\geq 2\sum |e_i| R_i$,
the Hamiltonian flow $\{\phi_F^{t}\}_{t\in\mathbb{R}}$ has a periodic orbits $(1,e)$ whose period is not more than 1 in the free loop homotopy class $(0_M,e)$. 
\end{thm}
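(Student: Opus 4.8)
The plan is to mimic the proof of Theorem \ref{new characterization of heaviness} but replace the rigid analysis of $1$-periodic trajectories with a compactness/degeneration argument that produces a periodic orbit of period at most $1$ once the monotonicity assumption is dropped. The starting point is identical: assume for contradiction that $\{\phi_F^t\}_{t\in\mathbb{R}}$ has no periodic orbit in the class $(0_M,e)$ of period $\le 1$. As before, restrict attention to $F$ supported in $M\times I^n_{R(\epsilon)}\times T^n$ for small $\epsilon$, and build the auxiliary Hamiltonian $H$ on $M\times T^n_R\times T^n$ exactly as in Lemma \ref{function on torus} and the proof of Proposition \ref{Irie and Seyfaddini}, so that $\gamma_H^x\in\mathcal{L}_{-e}$ for all $x$ in a neighborhood $W$ of $U_\epsilon$, $H$ is Morse with all contractible trajectories constant and $\operatorname{ind}_{\mathrm{Morse}}=\operatorname{ind}_{\mathrm{CZ}}$ on critical points, and $\|H\|_{C^0}<\sum_i R_i|e_i|+o(1)$.

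The key new ingredient is the following reformulation of the hypothesis. If $F$ is autonomous, a periodic orbit of period $T\in(0,1]$ in the class $(0_M,e)$ is, after rescaling time, a $1$-periodic trajectory of $TF$ in that class; so \emph{no} periodic orbit of period $\le 1$ means $\mathcal{P}(TF;(0_M,e))=\emptyset$ for \emph{every} $T\in(0,1]$. I would feed this into the concatenation argument of Proposition \ref{cL}: for each $T\in(0,1]$ form the Hamiltonian $L_T$ that concatenates the flow of $H$ (sweeping out loops in $-e$) with the flow of $TF$. Since $\mathcal{P}(TF;(0_M,e))=\emptyset$ for all such $T$, the inclusion $\tilde{\mathcal{P}}(L_T)\hookrightarrow\tilde{\mathcal{P}}(H)$ of Proposition \ref{cL} holds for every $T$, and the elements surviving in the image are constant loops carrying the fundamental class. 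The upshot is the bound $c([\,\cdot\,],TF)\le 2\|H\|_{C^0}$ for all $T\in(0,1]$ — note that the monotonicity term $\max\{0,\lambda w\}$ is precisely what is \emph{not} available here, which is why we must vary $T$ rather than argue at $T=1$ directly.

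Now I would combine this with heaviness applied to the \emph{stabilized} functional. By Example \ref{torus}, Theorem \ref{product of heavy} and Theorem \ref{fundamental heavy}, $X\times T^n$ is $[M\times T^n_{R(\epsilon)}\times T^n]$-heavy, so $\zeta_{[\cdot]}(F)\ge \inf_{X\times T^n}F\ge 2\sum_i R_i|e_i|+\max\{0,-\lambda(m+n)\}$, while $\zeta_a(F)=\lim_{l\to\infty} c(a,F^{\natural l})/l$ and $F^{\natural l}$ is conjugate to an $l$-fold iterate — passing through the autonomous-reparametrization identity this is controlled by the quantities $c(a,TF)$ above in the limit $T\to 0$ (equivalently $l\to\infty$ with the time rescaled). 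The asymptotic slope of $T\mapsto c([\cdot],TF)$ as $T\to 0^+$ is dominated by $2\|H\|_{C^0}/T\to\infty$ naively, so the honest statement I need is the linear-in-$T$ estimate $c([\cdot],TF)\le 2T\|F\|_{C^0}\cdot(\text{something})$; more carefully, one should run the $L_T$ argument with $H$ \emph{fixed} and extract, from the discreteness-free monotone argument replaced by a limiting argument, that $\zeta_{[\cdot]}(F)\le 2\sum_i R_i|e_i|$, contradicting the heaviness bound.

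\textbf{Main obstacle.} The hard part is precisely this last passage: without monotonicity the action spectrum need not be discrete, so one cannot simply say ``the fundamental class is represented by a fixed constant loop'' and read off the action. I expect the fix to be the standard device of working with the stabilization $\zeta_a$ directly (which is exactly why Theorem \ref{new characterization of heaviness2} is phrased with period $\le 1$ rather than $=1$): each $F^{\natural l}$ produces, via the $L$-concatenation, a bound $c([\cdot],F^{\natural l})\le l\cdot 2\|H^{(l)}\|_{C^0}$ where $H^{(l)}$ is a rescaled copy of $H$ with $\|H^{(l)}\|_{C^0}=\|H\|_{C^0}$ after normalizing by $l$, and the ``no periodic orbit of period $\le 1$'' hypothesis is exactly what guarantees $\mathcal{P}(F^{\natural l};(0_M,e))=\emptyset$ so that the inclusion argument survives iteration. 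Dividing by $l$ and letting $l\to\infty$ kills any non-discreteness defect in the estimate and yields $\zeta_{[M\times T^n_{R(\epsilon)}\times T^n]}(F)\le 2\sum_i R_i|e_i|+o(1)$ as $\epsilon\to0$, which is the desired contradiction. Verifying that the Conley–Zehnder bookkeeping (the ``$-w\le c_1(A)\le 0$'' step) still pins down the relevant action value correctly under iteration — and that the $\max\{0,\lambda w\}$ term genuinely drops out in the limit — is the one genuinely delicate computation.
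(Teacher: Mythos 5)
Your setup is the right one and matches the paper's: you build the same auxiliary $H$, you observe that for autonomous $F$ the hypothesis ``no periodic orbit of period $\le 1$ in class $(0_M,e)$'' is equivalent to $\mathcal{P}(TF;(0_M,e))=\emptyset$ for \emph{all} $T\in(0,1]$, and you form the one-parameter family of concatenations $L_T$ of the flow of $H$ with the flow of $TF$, noting that the action-preserving inclusion $\tilde{\mathcal{P}}(L_T)\hookrightarrow\tilde{\mathcal{P}}(H)$ survives for every $T$. This is exactly the family $L^s$ in the paper's Proposition 4.2. But you then fail to extract the estimate from it, and the device you propose instead does not work. The missing idea is to use the family as a \emph{homotopy}: by non-degenerate spectrality (Theorem 4.3) and the inclusion above, $c([\hat W],L^s)\in\operatorname{Spec}(L^s)\subset\operatorname{Spec}(H)$ for every $s$, where $H$ is a \emph{fixed} non-degenerate Hamiltonian whose action spectrum has measure zero; since $s\mapsto c([\hat W],L^s)$ is continuous by the Lipschitz property, its image is a measure-zero interval, hence a point, so the function is constant. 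Evaluating at $s=0$ via homotopy invariance gives $c([\hat W],L^1)=c([\hat W],H)\le\|H\|_{C^0}$, and the Lipschitz property then yields $c([\hat W],F)\le 2\|H\|_{C^0}$ with no monotonicity and no Conley--Zehnder bookkeeping at all. Your worry about ``pinning down the action of the fundamental class'' is thus dissolved, not by iteration, but by never having to identify which spectral value is attained.

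Your proposed substitute --- iterating and ``dividing by $l$'' --- contains a concrete error: a $1$-periodic trajectory of $F^{\natural l}=lF$ is an orbit $t\mapsto\phi_F^{lt}(x)$ with $\phi_F^l(x)=x$, i.e.\ a periodic orbit of $\{\phi_F^t\}$ of period up to $l$, not of period $\le 1$. So the hypothesis does \emph{not} guarantee $\mathcal{P}(F^{\natural l};(0_M,e))=\emptyset$, and the inclusion argument does not survive iteration as you claim. In fact no iteration is needed: once $c([M\times T^n_R\times T^n],F)<2\sum_i R_i|e_i|$ is established, the triangle inequality alone gives $\zeta_{[M\times T^n_R\times T^n]}(F)\le c([M\times T^n_R\times T^n],F)$ (since the fundamental class is the unit of the quantum product and $F^{\natural l}$ is the $l$-fold $\sharp$-product of the autonomous $F$ with itself), and this already contradicts heaviness of $X\times T^n$. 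Also note that the target inequality from heaviness in this theorem is just $2\sum_i R_i|e_i|$; the term $\max\{0,-\lambda(m+n)\}$ you carry along belongs to Theorem 1.6 and should not appear here.
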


To prove Theorem \ref{new characterization of heaviness2}, we give an upper bound of the spectral invariant for a Hamiltonian function $F\colon S^1\times M\times U_\epsilon\to\mathbb{R}$ such that its Hamiltonian isotopy $\{\phi_F^t\}$ has no trajectories in the free loop homotopy class $(0_M,e)$ whose period is not more than 1.
For $R=(R_1,\ldots,R_n)\in (\mathbb{R}_{>0})^n$ and a positive real number $\epsilon$ with  $\epsilon <\min\{R_1,\ldots,R_n\}$, let $R(\epsilon)$ denote $(R_1-\epsilon,\ldots,R_n-\epsilon)\in (\mathbb{R}_{>0})^n$, as before.

\begin{prop}\label{Irie and Seyfaddini2}
Let $(M,\omega)$ be a connected closed symplectic manifold.
Let $e=(e_1,\ldots,e_n)$ and $R=(R_1,\ldots,R_n)$ be elements of $\mathbb{Z}^n$ and $(\mathbb{R}_{>0})^n$, respectively.
For a positive real number $\epsilon$ with $2\epsilon <\min\{R_1,\ldots,R_n\}$, we define the open subset $U_\epsilon$ of $T^n_R\times T^n$ as in Proposition \ref{Irie and Seyfaddini}.
Then for any Hamiltonian function $F\colon S^1\times M\times U_\epsilon\to\mathbb{R}$ with compact support such that its Hamiltonian isotopy $\{\phi_F^t\}$ has no trajectories in the free loop homotopy class $(0_M,e)$ whose period is not more than 1,
\[c([M\times T^n_R\times T^n],F)<2\sum_{i=1}^nR_i\cdot |e_i|.\]
\end{prop}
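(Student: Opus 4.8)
The plan is to imitate the proof of Proposition \ref{Irie and Seyfaddini}, removing the only step that uses $\lambda$-monotonicity — the discreteness of the action spectrum inside the proof of Proposition \ref{cL} — and replacing it by an argument run over a one-parameter family of concatenations. As in the proof of Proposition \ref{Irie and Seyfaddini} one may assume $e\neq 0$, and one builds the same auxiliary Hamiltonian $H=H^\prime+\rho G$ on $\hat W:=M\times T^n_R\times T^n$, where $H^\prime(p,q)=\sum_i H^{R_i,\epsilon,e_i}(p_i,q_i)$ is provided by Lemma \ref{function on torus}, $\rho\colon T^n_R\times T^n\to[0,1]$ vanishes on $U_\epsilon$ and equals $1$ off a small neighbourhood $W$ of $U_\epsilon$, and $G$ is a $C^2$-small Morse function on $M$. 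Exactly as there, $\phi_H^1$ fixes $M\times U_\epsilon$ pointwise and maps its complement to itself, every trajectory of $\{\phi_H^t\}$ through a point of $M\times U_\epsilon$ lies in the class $(0_M,-e)$, the set $\operatorname{ev}(\mathcal P(H;(0_M,0_T)))=\operatorname{Crit}(H)$ is disjoint from $M\times U_\epsilon$, and $||H||_{C^0}<\sum_i(R_i-\epsilon)|e_i|+||G||_{C^0}$.

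Next I would introduce, for $s\in[0,1]$, the Hamiltonian $L_s\colon S^1\times\hat W\to\mathbb R$ which runs a reparametrised copy of $\{\phi_H^t\}_{t\in[0,1]}$ on $[0,\tfrac12]$ and a reparametrised copy of $\{\phi_F^t\}_{t\in[0,s]}$ on $[\tfrac12,1]$, assembled from smoothing functions $\chi$ on $[0,\tfrac12]$ and $\chi_s=s\psi$ in the style of the proof of Proposition \ref{cL}; then $s\mapsto L_s$ is $C^0$-continuous, $L_0$ generates a reparametrisation of $\{\phi_H^t\}$ followed by a constant path, and $\phi_{L_1}^1=\phi_F^1\phi_H^1$. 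The crucial point is the claim that \emph{for every $s\in[0,1]$ the contractible $1$-periodic orbits of $L_s$ are exactly the constant orbits at the points of $\operatorname{Crit}(H)$}. Indeed, $F$ is supported in $M\times U_\epsilon$, so a $1$-periodic orbit of $L_s$ based at a point of $M\times U_\epsilon$ is, up to parameter change, the concatenation of a loop in the class $(0_M,-e)$ with a trajectory of $\{\phi_F^t\}$ of period $\le 1$, hence is contractible only if $\{\phi_F^t\}$ has a trajectory of period $\le 1$ in the class $(0_M,e)$, which the hypothesis forbids; and a $1$-periodic orbit of $L_s$ based off $M\times U_\epsilon$ never enters the support of $F$, so it coincides up to parameter change with a trajectory of $\{\phi_H^t\}$, whose contractible periodic ones are the constants at $\operatorname{Crit}(H)$.

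Granting the claim, and using that $F$ vanishes at each point of $\operatorname{Crit}(H)$, one checks that the action spectrum of $L_s$ equals $\{H(x)-\omega(A):x\in\operatorname{Crit}(H),\ A\in\Gamma\}$, a set independent of $s$ and countable. Since $c([\hat W],L_s)$ lies in the action spectrum of $L_s$ for every $s$ (spectrality of the Oh-Schwarz invariant) and depends continuously on $s$ (the Lipschitz property), and since a continuous map from $[0,1]$ into a countable subset of $\mathbb R$ is constant, we get $c([\hat W],L_1)=c([\hat W],L_0)\le\int_0^1\max_{\hat W}(L_0)_t\,dt=\max_{\hat W}H\le||H||_{C^0}$. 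Then, exactly as at the end of the proof of Proposition \ref{cL}, the Lipschitz and homotopy-invariance properties give $c([\hat W],F)\le c([\hat W],L_1)+||\bar H||_{C^0}=c([\hat W],L_1)+||H||_{C^0}\le 2||H||_{C^0}<2\sum_i(R_i-\epsilon)|e_i|+2||G||_{C^0}$, which is $<2\sum_i R_i|e_i|$ once $G$ was chosen $C^2$-small enough.

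The step I expect to be the main obstacle is the displayed claim: one must check, uniformly in $s\in[0,1]$, that no non-constant contractible $1$-periodic orbit of $L_s$ appears, which forces one to arrange the cutoff $\rho$ so that $\phi_H^1$ is genuinely the identity on a full neighbourhood of $\operatorname{supp}F$ and to keep track of the free homotopy class of each portion of an orbit under concatenation. The advantage over Proposition \ref{Irie and Seyfaddini} is conceptual: the period-$\le 1$ hypothesis keeps the orbit set of the whole family free of non-constant contractible orbits, so the continuous path $s\mapsto c([\hat W],L_s)$ of spectral values is trapped in a fixed countable set and is therefore constant — countability, rather than discreteness, suffices, so no monotonicity (and hence no $\max\{0,\lambda(m+n)\}$ term) is needed.
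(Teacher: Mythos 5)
Your proposal is correct and follows essentially the same route as the paper: the paper isolates your argument as Proposition \ref{cL2}, building the same family $L^s$ of concatenations, using non-degenerate spectrality to trap $c([\hat{W}],L^s)$ in $\operatorname{Spec}(H)$, and deducing constancy in $s$ from continuity together with the fact that $\operatorname{Spec}(H)$ is measure-zero (you invoke countability of $\{H(x)-\omega(A)\}$, which serves the same purpose). The homotopy-class bookkeeping you flag as the main obstacle is handled exactly as you describe, so there is no gap.
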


To prove Proposition \ref{Irie and Seyfaddini2}, we use the following proposition which is a modification of an argument in \cite{Ir}.
\begin{prop}\label{cL2}
Let $W$ be an open subset of a $2w$-dimensional connected closed symplectic manifold $(\hat{W},\omega)$ and $\alpha\in[S^1,\hat{W}]$ a non-trivial homotopy class of free loops on $\hat{W}$.
Assume that a Hamiltonian function $H\colon W\to\mathbb{R}$ satisfies the following conditions.
\begin{itemize}
\item for any point $x$ in $W$, $\phi_H^1(x)=x$ and $[\gamma_H^x]=-\alpha$,
\item $H$ is non-degenerate,
\end{itemize}
Let $0_{\hat{W}}$ denote the class of constant loops in $\hat{W}$.

Let $F\colon S^1\times W\to\mathbb{R}$ be a Hamiltonian function with compact support such that its Hamiltonian isotopy $\{\phi_F^t\}$ has no trajectories in the free loop homotopy class $(0_{\hat{W}},e)$ whose period is not more than 1.
Then
 \[c([\hat{W}],F)\leq 2||H||_{C^0}.\]
\end{prop}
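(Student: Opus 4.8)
The plan is to mimic the proof of Proposition~\ref{cL}, replacing the quantitative control on cappings (which came from monotonicity, i.e. the discreteness of the action spectrum and the index identity $\operatorname{ind}_{\mathrm{CZ}}([x,c_x\sharp A])=\operatorname{ind}_{\mathrm{CZ}}([x,c_x])-2c_1(A)$) by the softer observation that here the relevant trajectory of the concatenated flow is \emph{non-constant}, hence carries no cancelling bubble. As before, I would pick a monotone cutoff function $\chi\colon[0,\tfrac12]\to[0,1]$ with $\chi\equiv 0$ near $0$ and $\chi\equiv 1$ near $\tfrac12$, $\tfrac{\partial\chi}{\partial t}\geq 0$, and form the time-reparametrized concatenation
\[
L(t,x)=
\begin{cases}
\dfrac{\partial\chi}{\partial t}(t)\,H(x) & t\in[0,\tfrac12],\\[1mm]
\dfrac{\partial\chi}{\partial t}(t-\tfrac12)\,F(\chi(t-\tfrac12),x) & t\in[\tfrac12,1],
\end{cases}
\]
whose Hamiltonian isotopy is (up to reparametrization) the concatenation $\{\phi_H^t\}$ followed by $\{\phi_F^t\}$, so that $\phi_L^1=\phi_F^1\phi_H^1$. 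The point is that a $1$-periodic trajectory $z$ of $\{\phi_L^t\}$ through a point $x\in W$ is the concatenation of $\gamma_H^x$ (class $-\alpha$) with a piece of the trajectory of $\phi_F$ through $\phi_H^1(x)=x$; if this concatenation were contractible, the $\phi_F$-part would have to be a loop in class $\alpha$ traversed in total time $\le 1$, which is exactly what the hypothesis on $F$ forbids. Hence every contractible $1$-periodic orbit of $L$ lies outside $W$, where $L$ agrees (up to reparametrization) with $H$; since $H$ is non-degenerate, so is $L$ on contractible orbits, and there is a natural inclusion $\iota\colon\widetilde{\mathcal P}(L)\to\widetilde{\mathcal P}(H)$ preserving the action functional and the Conley--Zehnder index.

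Next I would estimate $c([\hat W],L)$. Every capped contractible $1$-periodic orbit of $L$ is a capped orbit of $H$; the orbits of $H$ are trajectories $\gamma_H^x$ of class $-\alpha$, which are \emph{non-contractible} for $x\in W$, so the contractible ones all sit in $\hat W\setminus W$, where $H$ is autonomous. For such an orbit $[x,u]$ representing the PSS image of $[\hat W]$, the action value is $\mathcal A_H([x,u])=H(x)-\int u^\ast\omega$. Here is where the argument diverges from the monotone case: without monotonicity the spectrum need not be discrete, so I cannot argue that $c([\hat W],L)$ is \emph{attained}. Instead I would argue by the definition of $c$ as an infimum over representatives of $\Phi([\hat W])$ together with the grading: any representative cycle of $\Phi([\hat W])$ in top degree involves capped orbits $[x,u]$ with the right Conley--Zehnder index, and the action of each such is $\le \|H\|_{C^0}-\int u^\ast\omega$. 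The subtlety is controlling $\int u^\ast\omega$; I expect to use the standard fact that for a $C^2$-small autonomous $H$ on $\hat W\setminus W$ the PSS image of $[\hat W]$ is carried by the (trivially capped) local maxima, whose cappings can be chosen with $\int u^\ast\omega=0$ (this is where one may need a mild perturbation of $H$ and the structure of the low-energy part of the Floer complex), yielding $c([\hat W],L)\le \|H\|_{C^0}$.

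Finally I would recover the estimate for $F$ itself. Writing $F$ (up to the reparametrization absorbed into $L$) as $L$ composed with the inverse of the $H$-part, and using $\|\bar H\|_{C^0}=\|H\|_{C^0}$ together with the Lipschitz property and homotopy invariance of spectral invariants (Proposition~\ref{Oh}(1),(2)), I get
\[
c([\hat W],F)\ \leq\ c([\hat W],L)+\|\bar H\|_{C^0}\ \leq\ \|H\|_{C^0}+\|H\|_{C^0}\ =\ 2\|H\|_{C^0}.
\]
The main obstacle, as flagged above, is the absence of a discrete action spectrum: in Proposition~\ref{cL} discreteness was used both to know $c([\hat W],L)$ is attained by an honest capped orbit and to run the index computation pinning down $c_1(A)$. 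Here I must instead extract the bound $c([\hat W],L)\le\|H\|_{C^0}$ purely from (i) the location of contractible orbits outside $W$, (ii) the degree/PSS constraint, and (iii) a judicious choice of $H$ (e.g.\ a $C^2$-small Morse function on $\hat W\setminus W$, as in Proposition~\ref{cL2}'s non-degeneracy hypothesis) so that the top-degree part of the Floer complex is generated by trivially capped maxima. Verifying that this last reduction is legitimate — i.e.\ that no $\omega$-positive recapping can sneak into a top-degree representative and force the action up — is the one step that needs genuine care rather than bookkeeping.
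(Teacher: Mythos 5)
There is a genuine gap, and it is exactly the one you flag at the end: on a non-monotone manifold the reduction to ``trivially capped maxima'' cannot be made to work. In the paper's conventions $\Gamma=\pi_2(M)/(\operatorname{Ker}(c_1)\cap\operatorname{Ker}([\omega]))$, so there may be classes $A$ with $c_1(A)=0$ but $\omega(A)\neq 0$; recapping by such an $A$ preserves the Conley--Zehnder index (hence the degree/PSS constraint) while shifting the action by $-\omega(A)$, which is unbounded. Thus the index constraint gives no control whatsoever on $\int u^\ast\omega$ for top-degree generators, and you cannot force a representative of $\Phi([\hat W])$ to have small action by grading considerations alone. Moreover, you cannot take $H$ to be $C^2$-small: $H$ is prescribed by the setup (on $W$ it must generate a loop with $[\gamma_H^x]=-\alpha$, and $\|H\|_{C^0}$ is the quantity appearing in the conclusion). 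A further sign that this route is not the intended one: your static concatenation only ever uses the absence of trajectories of period exactly $1$, whereas the hypothesis forbids all periods $\leq 1$ --- that stronger hypothesis is there for a reason.

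The paper's argument avoids index bookkeeping entirely. One introduces the family $L^s$, $s\in[0,1]$, in which the $F$-part of the concatenation is rescaled to the time-$s$ flow of $F$; the hypothesis on periods $\leq 1$ guarantees that for every $s$ the contractible $1$-periodic orbits of $L^s$ lie outside $W$ and coincide with those of $H$, so $\operatorname{Spec}(L^s)\subset\operatorname{Spec}(H)$ for all $s$. Since $H$ is non-degenerate, so is each $L^s$, and the non-degenerate spectrality theorem (Theorem \ref{non-deg spec}, valid without monotonicity) gives $c([\hat W],L^s)\in\operatorname{Spec}(H)$; as $\operatorname{Spec}(H)$ has measure zero and $s\mapsto c([\hat W],L^s)$ is continuous by the Lipschitz property, this function is constant, and at $s=0$ homotopy invariance gives $c([\hat W],L^0)=c([\hat W],H)\leq\|H\|_{C^0}$. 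The final step $c([\hat W],F)\leq c([\hat W],L^1)+\|\bar H\|_{C^0}\leq 2\|H\|_{C^0}$ is then as in your proposal. So your first and last paragraphs are fine, but the middle of your argument must be replaced by this deformation-plus-spectrality mechanism; as written it cannot be completed.
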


As with Proposition \ref{cL}, the idea of using a Hamiltonian function $H$ satisfying the above conditions comes from Irie's paper \cite{Ir} (see also \cite{K}).

For a Hamiltonian function $H\colon S^1\times M\to\mathbb{R}$ with compact support,
let $\operatorname{Spec}(H)$ denote the set of critical values of the action functional $\mathcal{A}_H$ $\textit{i.e.}$ $\mathcal{A}_H(\tilde{\mathcal{P}}(H))$.
To prove Proposition \ref{cL2}, we use the following theorem.
\begin{thm}[\cite{U08}, \cite{Oh09}, non-degenerate spectrality]\label{non-deg spec}
Let $(M,\omega)$ be a closed symplectic manifold and $a$ be an element of $QH_\ast(M,\omega)$.
Then for any non-degenerate Hamiltonian function $F\colon S^1\times M\to\mathbb{R}$ with compact support,
$c(a,F)\in \operatorname{Spec}(F)$.
\end{thm}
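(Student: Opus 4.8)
The plan is to reduce the assertion $c(a,F)\in\operatorname{Spec}(F)$ to a single structural fact about the filtered Floer complex over the Novikov ring, and then to establish that fact. I assume throughout that $a\neq 0$, so that $\Phi(a)\neq 0$ in $HF_\ast(F)$; for $a=0$ the spectral invariant is $-\infty$ by convention and there is nothing to prove.

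The first point is that one inclusion is essentially built into the definition of $c(a,F)$. For any nonzero chain $A=\sum_i a_i[z_i,u_i]\in CF_\ast(F)$ the action level $l_F(A)=\max\{\mathcal{A}_F([z_i,u_i]):a_i\neq 0\}$ is realized at one of the generators appearing in $A$, hence $l_F(A)\in\mathcal{A}_F(\tilde{\mathcal{P}}(F))=\operatorname{Spec}(F)$. Since $\Phi(a)\neq 0$, every Floer cycle $A$ representing $\Phi(a)$ is nonzero, so $c(a,F)=\inf\{l_F(A):\partial A=0,\ [A]=\Phi(a)\}$ is the infimum of a subset of $\operatorname{Spec}(F)$. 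This alone does not finish the proof, because $\operatorname{Spec}(F)$ need not be closed in $\mathbb{R}$ (its value group $\{\int_A\omega:A\in\Gamma\}$ may be dense); what remains is to show that the infimum is \emph{attained} by an actual cycle $A_0$. Granting this, $c(a,F)=l_F(A_0)\in\operatorname{Spec}(F)$, and we are done.

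To prove attainment I would exploit non-degeneracy of $F$ as follows. Non-degeneracy makes $\mathcal{P}(F;0_M)$ a finite set, so $CF_\ast(F)$ is a free module of finite rank over the Novikov ring $\Lambda$, the differential $\partial$ is $\Lambda$-linear, and $\operatorname{im}\partial\subseteq\ker\partial$ are finitely generated $\Lambda$-submodules; moreover $l_F$ is a non-Archimedean filtration function, that is, $l_F(x+y)\le\max\{l_F(x),l_F(y)\}$ and $l_F$ shifts by the $\omega$-valuation under the $\Lambda$-action. The claim then reduces to the following algebraic statement: for a finite-rank filtered module of this kind over a \emph{complete} Novikov ring, for any submodule $V$ and any element $x_0$, the infimum $\inf\{l_F(x_0+v):v\in V\}$ is attained on the coset $x_0+V$. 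One applies this with $V=\operatorname{im}\partial$ and $x_0$ any cycle representing $\Phi(a)$ to produce $A_0$. Two ways to obtain this core statement are available: the algebraic route of \cite{U08}, which constructs bases of $\ker\partial$ and $\operatorname{im}\partial$ that are orthogonal (or, when the value group is dense, $\varepsilon$-orthogonal) with respect to $l_F$ and thereby reduces the computation of the infimum to the structure of these bases and shows it is realized; and the route of \cite{Oh09}, which works with the truncated subcomplexes $CF_\ast^{\le\lambda}(F)$ (genuine subcomplexes, since $\mathcal{A}_F$ decreases along Floer trajectories) and runs a chain-level minimization and limiting argument. A minor bookkeeping point along either route: the Novikov ring $\Lambda$ of the excerpt is built using $c_1$ as well as $[\omega]$ and so need not be a field; one sidesteps this by base-changing to the Novikov field associated with $\pi_2(M)/\ker[\omega]$ (or to the fraction field), noting that the extra $c_1$-directions carry zero $\omega$-area and hence leave all values of $l_F$ unchanged.

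The hard part is precisely this attainment step, and its difficulty is non-Archimedean in nature: over $\mathbb{R}$ a finite-dimensional normed space automatically has all subspaces closed and all quotient norms realized, whereas over the Novikov ring this must be extracted from completeness, and when the value group is dense an honest orthogonal basis need not exist, which forces the $\varepsilon$-orthogonal/limiting version. This is exactly the technical heart of \cite{U08} and \cite{Oh09}. Once attainment is granted, the conclusion $c(a,F)\in\operatorname{Spec}(F)$ is immediate from the observation in the second paragraph.
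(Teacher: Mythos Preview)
The paper does not give its own proof of this theorem: it is quoted as an external result from \cite{U08} and \cite{Oh09} and then used as a black box in the proof of Proposition~\ref{cL2}. So there is no in-paper argument to compare against.

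That said, your outline is an accurate sketch of how the cited proofs go. You correctly isolate the two ingredients: first the easy observation that every representative $A$ has $l_F(A)\in\operatorname{Spec}(F)$, and second the genuinely nontrivial step that the infimum defining $c(a,F)$ is \emph{attained}. You also correctly identify why non-degeneracy matters (finiteness of $\mathcal{P}(F;0_M)$, so $CF_\ast(F)$ has finite rank over $\Lambda$) and the two standard routes to attainment: Usher's linear-algebraic argument via (approximately) orthogonal bases for $\ker\partial$ and $\operatorname{im}\partial$ in a non-Archimedean normed space, and Oh's argument via the action filtration and a limiting construction. Your remark about base-changing to a Novikov field to sidestep the $c_1$-direction is apt.

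If anything, the proposal is a roadmap rather than a proof: the decisive attainment lemma is stated but not actually carried out, and the ``$\varepsilon$-orthogonal basis'' step in the dense-value-group case is precisely where the work lies. As a summary of \cite{U08} and \cite{Oh09} it is fine; as a self-contained proof it would need that lemma filled in.
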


\begin{proof}[Proof of Proposition \ref{cL2}]
We give an upper bound of the spectral invariant associated to $F$ by using the concatenation with $\phi_H^t$.

For a real number $s$ with $s \in [0,1]$, we define the new Hamiltonian function $L^s \colon S^1\times\hat{W}\to\mathbb{R}$ as follows:
\begin{equation*}
L^s(t,x)=
\begin{cases}
\frac{\partial\chi}{\partial t}(t)H(\chi(t),x) & \text{when }t\in[0,\frac{1}{2}], \\
s\frac{\partial\chi}{\partial t}(t-\frac{1}{2})F(s\chi(t-\frac{1}{2}),x)& \text{when }t\in[\frac{1}{2},1],
\end{cases}
\end{equation*}
where $\chi\colon[0,\frac{1}{2}]\to[0,1]$ is the function defined in the proof of Proposition \ref{Irie and Seyfaddini}.
Since $\frac{\partial\chi}{\partial t}=0$ on neighborhoods of $t=0$ and $t=\frac{1}{2}$, $L^s$ is a smooth Hamiltonian function.

We claim  $\operatorname{Spec}(L^s)\subset\operatorname{Spec}(H)$ for a real number $s$ with $s \in [0,1]$.
Let $F^s\colon S^1\times\hat{W}\to\mathbb{R}$ denote the Hamiltonian function defined by $F^s(t,x)=s\frac{\partial\chi}{\partial t}(\frac{t}{2})F(s\chi(\frac{t}{2}),x)$.
Let $[z,u]$ be an element of $\tilde{\mathcal{P}}(H)$ and define $x$ by $x=\operatorname{ev}(z)$.
If $x\in W$, by the definition of $H$, $\gamma_H^x\in\mathcal{L}_{0_{\hat{W}}}(W)$.
Since the path $\gamma_{L^s}^x$ is the concatenation of the paths $\gamma_H^x$ and $\gamma_{F^s}^{\phi_H(x)}$ up to parameter change and $\{\phi_F^t\}$ has no trajectories in the free loop homotopy class $0_{\hat{W}}$ whose period is not more than 1,
$\gamma_{L^s}^x\notin \mathcal{L}_{0_{\hat{W}}}(\hat{W})$ for any $x\in W$.
If $x\notin W$, then $\phi_H(x)\notin W$.
Thus $\gamma_{L^s}^x$ is equal to $\gamma_H^x$ up to parameter change and $\int_0^1H(t,\gamma_H^x(t))dt=\int_0^1L(t,\gamma_{L^s}^x(t))dt$.
Therefore we see that there exists a natural inclusion map $\iota\colon  \tilde{\mathcal{P}}(L^s)\to \tilde{\mathcal{P}}(H)$ which preserves values of the action functional, and hence $\operatorname{Spec}(L^s)\subset\operatorname{Spec}(H)$.
Since $H$ is a non-degenerate Hamiltonian function, $L^s$ is also non-degenerate, and hence Theorem \ref{non-deg spec} implies $c([\hat{W}],L^s)\in\operatorname{Spec}(H)$.

By the Lipschitz property for spectral invariants (Proposition \ref{Oh} (1)), $c([\hat{W}],L^s)$ depends continuously on $s$. 
Since $\operatorname{Spec}(H)$ is a measure-zero set (Lemma 2.2 of \cite{Oh02}),  $c([\hat{W}],L^s)$ is a constant function of $s$.
The homotopy invariance for spectral invariants (Proposition \ref{Oh} (2)) implies 
\[c([\hat{W}],L^0)=c([\hat{W}],H).\]
Hence for any $s \in [0,1]$,
 \[c([\hat{W}],L^s)=c([\hat{W}],H).\]
Then $c([\hat{W}],F)$ is estimated as follows.
 \begin{align*}
c([\hat{W}],F)& \leq c([\hat{W}],L^1)+||\bar{H}||_{C^0}\\
& = c([\hat{W}],H)+||H||_{C^0}\\
      & <2(\sum_{i=1}^n(R_i-\epsilon)\cdot |e_i|+||G||_{C^0}).
\end{align*}
\end{proof}

\begin{proof}[Proof of Proposition \ref{Irie and Seyfaddini2}]
Let $G$ be a Morse function on $M$ and $H\colon \hat{W}\to\mathbb{R}$ the Hamiltonian function defined in the proof of Proposition \ref{Irie and Seyfaddini}.

As we explained in the proof of Proposition \ref{Irie and Seyfaddini}, if the Morse function $G$ is sufficiently $C^2$-small, then 
  \[\operatorname{ev}(\mathcal{P}(H;(0_M,0_T)))=\{(y_i,(w_{j_1},\ldots,w_{j_n}))\}_{i\in\{1,\ldots,k\}, j_1,\ldots,j_n\in\{1,2,3,4\}}=\operatorname{Crit}(H).\]
In particular, $H$ is a non-degenerate Hamiltonian function.
Since 
\[||\bar{H}||_{C^0}=||H||_{C^0}\leq \sum_{i=1}^n(R_i-\epsilon)\cdot |e_i|+||G||_{C^0},\]
Proposition \ref{cL2} implies
\[c([M\times T^n_R\times T^n],F)<2(\sum_{i=1}^n(R_i-\epsilon)\cdot |e_i|+||G||_{C^0}).\]
If the Morse function $G$ is sufficiently $C^2$-small,
\[c([M\times T^n_R\times T^n],F)<2\sum_{i=1}^nR_i\cdot |e_i|.\]
\end{proof}

\begin{proof}[Proof of Theorem \ref{new characterization of heaviness2}]
Fix a positive real number $\epsilon$  with $\epsilon <\min\{R_1,\ldots,R_n\}$ and take a Hamiltonian function $F\colon S^1\times M\times I^n_{R(\epsilon)}\times T^n\to\mathbb{R}$ with compact support such that $F|_{S^1\times X\times T^n}\geq 2\sum_{i=1}^nR_i\cdot |e_i|$.
Assume that $\{\phi_F^t\}$ has no trajectories in the free loop homotopy class $(0_M,e)$ whose period is not more than 1.
Since  $\{\phi_F^t\}$ has no trajectories in the free loop homotopy class $(0_M,e)$ whose period is not more than 1, Proposition \ref{Irie and Seyfaddini2} and the triangle inequality for spectral invariants (Proposition \ref{Oh} (3)) imply $\zeta_{[M\times T^n_{R(\epsilon)}\times T^n]}(F)< 2\sum_{i=1}^nR_i\cdot |e_i|$.

 By applying Theorem \ref{product of heavy} to Example \ref{torus}, we see that $X\times T^n$ is a heavy subset.
Then Proposition \ref{fundamental heavy} implies that $X\times T^n$ is $[M\times T^n_{R(\epsilon)}\times T^n]$-heavy, and hence $\zeta_{[M\times T^n_{R(\epsilon)}\times T^n]}(F)\geq\inf_{X\times T^n}F\geq 2\sum_{i=1}^nR_i\cdot |e_i|$ by Definition \ref{definition of heavy}.

These two inequalities contradict and we proved that  $\{\phi_F^t\}$ has a trajectory in the free loop homotopy class $(0_M,e)$ whose period is not more than 1.
Since any Hamiltonian function $F\colon S^1\times M\times I^n_R\times T^n\to\mathbb{R}$ with compact support has support in $S^1\times M \times I^n_{R(\epsilon)}\times T^n$ for some $\epsilon$, we complete the proof of Theorem \ref{new characterization of heaviness2}.
\end{proof}

\section{Displaceable subsets and non-contractible trajectories}\label{displaceable}

For a Hamiltonian function $H\colon S^1\times M\to \mathbb{R}$ with compact support on a symplectic manifold $M$, we define \textit{the Hofer length} $||H||$ of $H$ by
\[||H||=\int_0^1||H_t||_{C^0}dt.\]
For a subset $X$ of $M$, we define \textit{the displacement energy} of $X$ by
\[E(X)=\inf\{ ||H|| ; H\in C_c^\infty(S^1\times M), \bar{X}\cap\phi_H^1(X)=\emptyset \} ,\]
where $\bar{X}$ is the topological closure of $X$.
If $X$ is non-displaceable, we define $E(X)=\infty$.

\begin{proof}[Proof of Proposition \ref{our displaceability}]
To use Proposition \ref{BPS displaceability}, we construct the Hamiltonian function $\hat{H}\colon S^1\times M\times I_R^n\times T^n\to\mathbb{R}$ such that $(X\times T^n)\cap\phi^1_{\hat{H}}(X\times T^n)=\emptyset$ and $\mathcal{P}(\hat{H};(0_M,e))=\emptyset$.
Fix a sufficiently small positive number $\epsilon$.
We take a Hamiltonian function $H\colon S^1\times M\to\mathbb{R}$ with compact support such that $||H||<E(X)+\epsilon$ and $X\cap \phi_H^1(X)=\emptyset$.
Since $|e_k|\cdot R_k>E(X)$ and $\epsilon$ is sufficiently small, we can take a function $\rho_k\colon (-R_k,R_k)\to\mathbb{R}$ with compact support and such that
\begin{itemize}
\item $\rho_k\equiv 1$ in a neighborhood of \{0\},
\item $|\dot{\rho}_k(x)|<|e_k|\cdot (E(X)+\epsilon)^{-1}$ for any $x\in (-R_k,R_k)$.
\end{itemize}
For $i\neq k$, we take a function $\rho_i \in C_c^\infty(-R_i,R_i)$ with $\rho_i\equiv 1$ in a neighborhood of $\{0\}$.
we define the Hamiltonian function $\hat{H}\colon S^1\times M\times I_R^n\times T^n\to\mathbb{R}$ by
\[\hat{H}(t,x,p,q)=\prod_i\rho_i(p_i)\cdot H(t,x).\]
Then
\[(X_{\hat{H}}^t)_{(x,p,q)}=(\prod_i\rho_i(p_i)\cdot (X_H^t)_x,0,\ldots,0,\dot{\rho}_1(p_1)\cdot H(t,x),\ldots,\dot{\rho}_n(p_n)\cdot H(t,x) ).\]
Since  $\rho_i\equiv 1$ in a neighborhood of $\{0\}$, $(X\times T^n)\cap\phi^1_{\hat{H}}(X\times T^n)=\emptyset$.
Since $|\dot{\rho}_k|<|e_k|\cdot (E(X)+\epsilon)^{-1}$ and  $\int_0^1||H_t||_{C^0}dt=||H||<E(X)+\epsilon$, $\int_0^1|\dot{\rho}_k(p_k)|\cdot |H(t,x)|dt=\dot{\rho}_k(p_k)\cdot|\int_0^1|H(t,x)|dt$ is smaller than $|e_k|$ and hence $\mathcal{P}(\hat{H};(0_M,e))=\emptyset$.
Thus Proposition \ref{BPS displaceability} implies 
\[C(M,X,R;e)=C_{BPS}(M\times I_R^n\times T^n, X\times T^n;(0_M,e))=\infty.\]
\end{proof}

\section{Compressible Hamiltonian torus action and non-contractible trajectories}\label{compressible section}
We have a family of examples similar to Example \ref{torus and cp^n}.
Let $(M,\omega)$ be a closed symplectic manifold.
We consider the case when a moment map $\Phi=(F^1,\ldots,F^l)\colon M\to\mathbb{R}^l$ induces a Hamiltonian torus action $\textit{i.e.}$ $\phi_{F^i}^1=\mathrm{id}$ for $i=1,\ldots,k$ and $\{F^i,F^j\}=0$ for $i\neq j$.
Then there exists a natural inclusion map $\iota\colon T^l\to\mathrm{Ham}(M,\omega)$.
A Hamiltonian action induced by $\Phi$ is \textit{compressible} if the image of the map $\iota_\ast\colon \pi_1(T^l)\to\pi_1(\mathrm{Ham}(M,\omega))$ is a finite group, where $\iota_\ast$ is a homomorphism induced by $\iota$.

Entov and Polterovich proved the following theorem.
\begin{thm}[\cite{EP09}]\label{EP compressible}
Let $(M,\omega)$ be a $2m$-dimensional connected closed symplectic manifold and $\Phi=(F^1,\ldots,F^l)\colon M\to\mathbb{R}^l$ a moment map which induces a compressible Hamiltonian torus action.
Assume that  $F^i$ is normalized as a Hamiltonian function for any $i$.
Then
\begin{description}
\item[(1)]  $\Phi^{-1}(0)$ is heavy, thus stably non-displaceable,
\item[(2)]  $\Phi^{-1}(y)$ is stably displaceable for any point $y$ in $\Phi(M)$ with $y\neq 0$.
\end{description}
\end{thm}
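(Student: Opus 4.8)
The plan is to prove this through the partial symplectic quasi-state $\zeta_{[M]}$ associated to the unit $a=[M]\in QH_{2m}(M,\omega)$ of the quantum product, following the strategy of \cite{EP09}. The proof splits into three parts: first I would show that each normalized moment component satisfies $\zeta_{[M]}(F^i)=\zeta_{[M]}(-F^i)=0$; then I would use this to derive that $\Phi^{-1}(0)$ is $[M]$-heavy, which yields $n$-stable non-displaceability for every $n$; finally I would show that $\Phi^{-1}(y)$ is stably displaceable for $y\neq 0$.

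For the first part: since $F^i$ is time-independent and is preserved by its own flow, the Hamiltonian $jF^i$ generates the isotopy $t\mapsto\phi_{F^i}^{jt}$; compressibility means the loop $\{\phi_{F^i}^t\}$ has finite order $N$ in $\pi_1(\mathrm{Ham}(M,\omega))$, so for $j=Nk$ this isotopy is null-homotopic rel endpoints. Because $F^i$ is normalized, the homotopy invariance of spectral invariants (Proposition \ref{Oh} (2)) shows $c([M],NkF^i)$ is independent of $k$, hence bounded in $k$. For a general $j=Nk+r$ with $0\leq r<N$ one has the identity $jF^i=(NkF^i)\sharp(rF^i)$, so the triangle inequality (Proposition \ref{Oh} (3)) for the idempotent $[M]$ gives $c([M],jF^i)\leq c([M],NkF^i)+c([M],rF^i)$, bounded in $j$; the same bound applied to $-F^i$ (also normalized, generating the inverse loop, hence compressible) together with $c([M],0)\leq c([M],jF^i)+c([M],\overline{jF^i})$ bounds $c([M],jF^i)$ from below. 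Hence $\zeta_{[M]}(F^i)=\lim_j c([M],jF^i)/j=0$, and likewise $\zeta_{[M]}(-F^i)=0$.

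For the second and third parts I would use the Atiyah--Guillemin--Sternberg convexity theorem: $\Delta=\Phi(M)$ is a convex polytope, and the normalization $\int_M F^i\,\omega^m=0$ places the origin at the barycenter of the Duistermaat--Heckman measure, hence in the relative interior of $\Delta$, and makes the ``central'' value of every $G=\sum_i\xi_iF^i$ equal to $0$. For the heaviness of $\Phi^{-1}(0)$ I would combine the vanishing just proved --- which makes $\zeta_{[M]}$ insensitive to adding any linear combination of the Poisson-commuting $F^i$ --- with the monotonicity and $\mathrm{Ham}$-invariance of $\zeta_{[M]}$ and an averaging over the torus action, reducing the inequality $\zeta_{[M]}(H)\geq\inf_{\Phi^{-1}(0)}H$ to a statement about torus-invariant Hamiltonians controlled by the moment map. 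For the stable displaceability, given $y\neq 0$ I would choose a primitive $\xi\in\mathbb{Z}^l$ with $\langle\xi,y\rangle\neq 0$; then $G=\langle\xi,\Phi\rangle$ generates a still-compressible Hamiltonian circle action and $\Phi^{-1}(y)$ sits in the level $G^{-1}(\langle\xi,y\rangle)$ over a non-central value, and I would displace it by using the contractibility of an iterate of the generating loop to build a compactly supported displacing isotopy on the stabilization $M\times T^\ast S^1$.

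The main obstacle is exactly the conversion of the hypothesis ``compressible'' into these two consequences: the vanishing $\zeta_{[M]}(F^i)=0$, and, in the last part, a compactly supported displacing isotopy on the stabilization. In particular the heaviness in (1) is genuinely stronger than what (2) alone provides --- (2) yields only stably displaceable non-central fibers, which is too weak to present $\Phi^{-1}(0)$ as a genuine stem --- so (1) has to be obtained directly from the vanishing of $\zeta_{[M]}$ on the $F^i$, as in the second part of the plan.
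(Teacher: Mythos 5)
The paper does not prove Theorem \ref{EP compressible}: it is quoted verbatim from \cite{EP09}, so your proposal has to be measured against Entov--Polterovich's argument. Your first step is correct: $\zeta_{[M]}(\pm F^i)=0$ does follow from homotopy invariance applied to the contractible iterate of the circle loop together with the triangle inequality, exactly as you write. Your third step is also the right mechanism for (2): it is the suspension construction of Theorem 2.1 of \cite{EP09}, reproduced in this paper as Proposition \ref{how to stably disp} (the isotopy $\Psi_t(x,p,q)=(f_t^qx,\,p-F^t(q,f_t^qx),\,q)$ cut off by a bump function in $p$), applied to $G=N\langle\xi,\Phi\rangle$ with $\langle\xi,y\rangle\neq0$: this $G$ is normalized, nonzero on $\Phi^{-1}(y)$, and generates a loop whose $N$-th iterate is contractible by compressibility.

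The genuine gap is in part (1), and your closing paragraph inverts the logic of \cite{EP09}. There, (1) is deduced \emph{from} (2): statement (2) says that every fiber of $\Phi$ other than $\Phi^{-1}(0)$ is stably displaceable, i.e.\ $\Phi^{-1}(0)$ is a \emph{stable stem} of the Poisson-commutative subspace spanned by $F^1,\ldots,F^l$, and \cite{EP09} prove that stable stems are heavy. That proof needs two inputs your plan does not supply: the vanishing of $\zeta_{[M]}$ on Hamiltonians with stably displaceable support (obtained by a product/K\"unneth argument on $M\times T^\ast T^1$), and partial quasi-additivity for Poisson-commuting functions; together these handle an arbitrary $H$ with $\inf_{\Phi^{-1}(0)}H=0$ by minorizing it, up to $\epsilon$, by a pullback $g\circ\Phi$ with $g\le 0$ vanishing near $0$, and then decomposing $g$ over a finite cover of its support by small balls around non-central values. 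The route you propose instead --- vanishing of $\zeta_{[M]}$ on the span of the $F^i$ plus ``averaging over the torus action'' and monotonicity --- does not close: $\zeta_{[M]}$ is only a partial quasi-state, there is no inequality comparing $\zeta_{[M]}(H)$ with $\zeta_{[M]}$ of the torus-average of $H$, and knowing $\zeta_{[M]}(G)=\zeta_{[M]}(-G)=0$ for $G$ in the linear span controls $\zeta_{[M]}(h\circ\Phi)$ only for affine $h$; no affine function of $\Phi$ minorizes a general such $H$ up to $\epsilon$. One genuinely needs bump functions of $\Phi$ supported away from $0$, and the only available reason $\zeta_{[M]}$ vanishes on those is the stable displaceability of the non-central fibers --- precisely the implication ``(2) $\Rightarrow$ (1)'' that you declared too weak.
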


We have the corresponding result on the existence problem of non-contractible trajectories.
\begin{thm}\label{compressible}
Let $(M,\omega)$ be a connected closed $\lambda$-monotone symplectic manifold and $\Phi=(F^1,\ldots,F^l)\colon M\to\mathbb{R}^l$ be a moment map which induces a compressible Hamiltonian torus action.
Assume that  $F^i$ is normalized as a Hamiltonian function for any $i$.
Then there exists a positive real number $E$ such that
\begin{description}
\item[(1)]  $C(M,\Phi^{-1}(0),R;e)\leq 2\sum_{i=1}^nR_i\cdot |e_i|+\max\{0,-\lambda(m+n)\}$ for any elements $e=(e_1,\ldots,e_n)$ and $R=(R_1,\ldots,R_n)$ of $\mathbb{Z}^n$ and $(\mathbb{R}_{>0})^n$, respectively,
\item[(2)]  $C(M,\Phi^{-1}(y),R;e)=\infty$ for any point $y$ in $\Phi(M)$ with $y\neq 0$ and for any elements $e=(e_1,\ldots,e_n)$ and $R=(R_1,\ldots,R_n)$ of $\mathbb{Z}^n$ and $(\mathbb{R}_{>0})^n$ with $R_k>E$ and $e_k\neq 0$ for some $k$, respectively.
\end{description}
\end{thm}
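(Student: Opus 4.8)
Part (1) is unconditional and immediate: by Theorem \ref{EP compressible}(1) the fibre $\Phi^{-1}(0)$ is a heavy subset of the $2m$-dimensional closed $\lambda$-monotone manifold $(M,\omega)$, so Theorem \ref{capacity statement} (equivalently Theorem \ref{new characterization of heaviness}) applied with $X=\Phi^{-1}(0)$ gives exactly the stated bound. Thus all the content, and the only use of the constant $E$, is in (2), which I regard as a ``stable'' analogue of Proposition \ref{our displaceability}: in that proposition we exploited that a displaceable $X$ admits a compactly supported displacing Hamiltonian on $M$ itself, whereas $\Phi^{-1}(y)$ for $y\neq0$ need not be displaceable in $M$, only stably displaceable by Theorem \ref{EP compressible}(2). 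The plan is to run the same mechanism one annulus-factor of $I_R^n\times T^n$ at a time and then quote Proposition \ref{BPS displaceability}.

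Fix $y\neq 0$ and an index $k$ with $e_k\neq0$ and $R_k>E$. By Theorem \ref{EP compressible}(2) there is a compactly supported Hamiltonian $G$ on $M\times T^\ast S^1$ with $\phi_G^1(\Phi^{-1}(y)\times 0_{S^1})\cap(\Phi^{-1}(y)\times 0_{S^1})=\emptyset$, where $0_{S^1}$ is the zero section; write $T^\ast S^1=\{(p,q)\}$ with $q\in S^1$ the base and $p\in\mathbb{R}$ the fibre. Identifying the $k$-th factor $I_{R_k}\times T^1$ of $I_R^n\times T^n$ (coordinates $(p_k,q_k)$) with a neighbourhood of the zero section, and taking $R_k$ large enough that the $p$-support of $G$ lies in $(-R_k,R_k)$, transport $G$ to a compactly supported $\tilde G$ on $M\times I_{R_k}\times T^1$ still displacing $\Phi^{-1}(y)\times 0_{S^1}$. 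With compactly supported cut-offs $\rho_i\colon I_{R_i}\to[0,1]$, $\rho_i\equiv1$ near $0$, for $i\neq k$, put
\[\hat H(t,x,p,q)=\Big(\prod_{i\neq k}\rho_i(p_i)\Big)\cdot\tilde G(t,x,p_k,q_k)\]
on $S^1\times M\times I_R^n\times T^n$; it has compact support. Since $\hat H$ does not depend on $q_i$ for $i\neq k$, every $p_i$ ($i\neq k$) is constant along trajectories, the set $\{p_i=0\ (i\neq k)\}$ is invariant, and on it $\hat H=\tilde G$ while each $q_i$ stays constant; hence $\phi_{\hat H}^1(\Phi^{-1}(y)\times T^n)$ is, in the $(x,p_k,q_k)$-coordinates, $\phi_{\tilde G}^1(\Phi^{-1}(y)\times 0_{S^1})$ and is unchanged in the remaining coordinates, so it is disjoint from $\Phi^{-1}(y)\times T^n$. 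Thus $\hat H$ displaces $\Phi^{-1}(y)\times T^n$.

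It remains to arrange $\mathcal P(\hat H;(0_M,e))=\emptyset$. For a $1$-periodic trajectory $z$ of $\hat H$ the winding number around the $k$-th circle is $\int_0^1\partial_{p_k}\hat H(t,z(t))\,dt$, and because $\prod_{i\neq k}\rho_i\le 1$ its absolute value is at most the Lipschitz constant of $\tilde G$ in the variable $p_k$; if that constant is $<1$, this integer winding number vanishes, so no trajectory of $\hat H$ lies in the class $(0_M,e)$ when $e_k\neq0$, and Proposition \ref{BPS displaceability} then yields $C(M,\Phi^{-1}(y),R;e)=\infty$. Hence the real point — and the main obstacle — is: the stable displacement of $\Phi^{-1}(y)$ can be realized by a compactly supported Hamiltonian on $M\times T^\ast S^1$ whose $p$-support lies in $(-E,E)$ for a single $E$ independent of $y$, and whose Lipschitz constant in $p$ is $<1$. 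The uniformity in $y$ should follow from the Entov--Polterovich construction underlying Theorem \ref{EP compressible}(2), which uses only the circle subactions generated by the finitely many normalized $F^j$ together with the $y$-independent compressibility datum (a power making each such loop contractible in $\mathrm{Ham}(M,\omega)$), and uses of $y$ only that $y_j\neq0$ for some $j$; the Lipschitz bound $<1$ should be obtained by spreading that construction over a wide $p$-interval, enlarging the $p$-support by a further factor that is again controlled uniformly in $y$, and one then takes $E$ to be the resulting uniform bound. When $\Phi^{-1}(y)$ happens to be genuinely displaceable in $M$ — as for the Clifford fibres of Example \ref{torus and cp^n} — this spreading step is unnecessary and Proposition \ref{our displaceability} applies directly, so the work is entirely in the merely-stably-displaceable case.
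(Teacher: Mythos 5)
Your proposal is correct and follows essentially the same route as the paper: part (1) is Theorem \ref{capacity statement} applied to the heavy fibre $\Phi^{-1}(0)$, and part (2) is the cut-off-and-wind mechanism of Proposition \ref{our displaceability} run on one annulus factor, concluded via Proposition \ref{BPS displaceability}. The one step you defer as ``should follow'' is exactly the paper's Proposition \ref{how to stably disp}, and your heuristic for it is the actual mechanism: the Entov--Polterovich suspension $\Psi_t(x,p,q)=(f_t^qx,\,p-F^t(q,f_t^qx),\,q)$ of a nullhomotopy of the circle action generated by $N_kF^k$ has a generating Hamiltonian $\hat{H}$ that is independent of $p$, so multiplying by a bump $\rho(p)$ with $|\dot{\rho}|<\inf_t\|\hat{H}_t\|_{C^0}^{-1}$ (your ``spreading over a wide $p$-interval'') yields compact support together with $|\partial H/\partial p|<1$. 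Uniformity of $E$ in $y$ is then automatic because only the finitely many Hamiltonians $H^1,\ldots,H^l$ (one per circle subaction) are ever used, and $H^k$ displaces $\Phi^{-1}(y)\times T^1$ simultaneously for every $y$ with $y_k\neq 0$ since $F^k\equiv y_k\neq 0$ on that fibre; the paper simply sets $E=\max_k\sup\{|r|; r\in\operatorname{pr}_2(\cup_t\operatorname{Supp}(H^k_t))\}$.
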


(1) of Theorem \ref{compressible} follows immediately from Theorem \ref{capacity statement} and (1) of Theorem \ref{EP compressible}.

To prove (2) of Theorem \ref{compressible}, we use the following theorem which is a slight modification of  Theorem 2.1 of \cite{EP09}.
Note that we can identify $T^\ast T^1$ with $\mathbb{R}\times T^1$ with coordinates $(p,q)$.
\begin{prop}\label{how to stably disp}
Let $X$ be a compact subset of a closed symplectic manifold $M$.
Assume that there exists a normalized Hamiltonian function $F\colon S^1\times M\to\mathbb{R}$ generating a loop $\{\phi_F^t\}_{t\in[0,1]}$ in $\mathrm{Ham}(M,\omega)$ which is homotopic to the trivial isotopy relative to endpoints and $F(t,x)\neq 0$ for any $t$ and any point $x$ with $x\notin X$.
Then there exists a Hamiltonian function $H\colon S^1\times M\times T^\ast T^1\to\mathbb{R}$ with compact support such that $(X\times T^1)\cap\phi_{H}^1(X\times T^1)=\emptyset$ and $|\frac{\partial H}{\partial p}(t,x,p,q)|<1$ for any point $(t,x,p,q)$ in $S^1\times M\times T^\ast T^1$.
\end{prop}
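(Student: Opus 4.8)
The plan is to adapt the proof of Theorem~2.1 of \cite{EP09}, suspending the contractible loop $\{\phi_F^t\}$ over the circle factor of $T^*T^1$. Write $T^*T^1=\{(p,q):p\in\mathbb{R},\ q\in\mathbb{R}/\mathbb{Z}\}$ with its form $dp\wedge dq$ and regard $F$ as a $1$-periodic function on $\mathbb{R}\times M$; since $\phi_F^1=\mathrm{id}$, the path $q\mapsto\phi_F^q$ is $1$-periodic, so for each $c\in\mathbb{R}$ the map
\[\Psi_c(x,p,q)=\bigl(\phi_F^q(x),\ p-F\bigl(q,\phi_F^q(x)\bigr)+c,\ q\bigr)\]
is a well-defined symplectomorphism of $\bigl(M\times T^*T^1,\ \omega\oplus(dp\wedge dq)\bigr)$, as one checks directly. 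Using that $\{\phi_F^t\}$ is homotopic to the constant loop rel endpoints, I would fix an interpolating family of loops $\{\phi^{F_s}_t\}$ ($s\in[0,1]$) from $\phi_F$ to the identity and connect $\Psi_c$ to $\mathrm{id}$ through symplectomorphisms $\Psi_s$ of the same shape. Each $\Psi_s$ fixes $q$ and translates $p$ by a function of $(x,q)$ alone, so the vector field generating $\{\Psi_s\}$ has vanishing $q$-component; hence the corresponding Hamiltonian $K(s,x,q)$ is independent of $p$, and in particular $\partial_pK\equiv 0$. The first real point is that this isotopy is \emph{Hamiltonian}, not merely symplectic: its flux is a class in $H^1(M\times T^*T^1)$ that varies with $c$ and vanishes for a unique value $c=c_0$. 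One fixes $c=c_0$; this is the step where contractibility of the loop is essential.

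The second step is to show that, for this $c_0$, the time-$1$ map of $\{\Psi_s\}$ pushes $X\times\{0\}\times T^1$ off itself. A point $\Psi_{c_0}(x,0,q)$ can lie in $X\times\{0\}\times T^1$ only if $\phi_F^q(x)\in X$ and $F\bigl(q,\phi_F^q(x)\bigr)=c_0$. The assumption that $F(t,\cdot)$ is nowhere zero on $M\setminus X$ confines the zero set of each $F_t$ to $X$, and, combined with the description of $c_0$ as the normalization constant forced in the previous step, this is what is needed to place $c_0$ outside the compact set $\bigl\{F(q,\phi_F^q(x)):x\in X,\ q\in T^1\bigr\}$; hence no such point exists. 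I expect this comparison — identifying $c_0$ explicitly enough to play it off against the sign hypothesis on $F$ — to be the crux of the proof, and it is the one place where both hypotheses of the Proposition are used essentially.

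Finally I would cut off in the fibre. Along $\{\Psi_s\}$ the $p$-coordinate of every point of $X\times\{0\}\times T^1$ stays in a fixed compact interval $[-A,A]$, with $A$ controlled by $|c_0|$ and $\max_s\|F_s\|_{C^0}$. Choose $\eta\colon\mathbb{R}\to[0,1]$ with $\eta\equiv 1$ on $[-A,A]$, compactly supported, and with $\|\eta'\|_{C^0}$ as small as desired, and set $H(s,x,p,q)=\eta(p)\,K(s,x,q)$, after reparametrizing $s\in S^1$ so that $K$ vanishes near $s=0,1$. Then $H$ has compact support, and since the trajectories issuing from $X\times\{0\}\times T^1$ never leave $\{\eta=1\}$, the time-$1$ flow of $H$ agrees there with $\Psi_{c_0}$, so $\phi_H^1$ displaces $X\times T^1$; moreover $\partial_pH=\eta'(p)K$, whence $|\partial_pH|\le\|\eta'\|_{C^0}\|K\|_{C^0}<1$ once $\eta$ is flat enough (this only enlarges the support of $H$, which is harmless). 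Apart from the determination of $c_0$, the remaining ingredients — the symplecticity and flux computations and the cutoff estimate — are routine.
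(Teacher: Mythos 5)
Your architecture is the paper's: the suspension $\Psi(x,p,q)=(\phi_F^q(x),\,p-F(q,\phi_F^q(x)),\,q)$, Hamiltonian-ness via the contractibility of the loop together with the normalization of $F$ (this is Theorem~6.1.B of \cite{P01}, which the paper invokes), $p$-independence of the generating Hamiltonian because $q$ is preserved, and a compactly supported cutoff $\rho(p)$ with $|\dot\rho|$ small enough that $|\partial_p H|<1$. All of that is fine and matches the paper. The gap is in the step you yourself identify as the crux, the displacement of $X\times\{0\}\times T^1$, and as written that step fails.

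Concretely: the normalization $\int_M F_t\,\omega^m=0$ forces your constant to be $c_0=0$ (any other $c$ contributes a nonzero multiple of $[dq]$ to the flux), so displacement requires that there be no $(x,q)$ with $x\in X$, $\phi_F^q(x)\in X$ and $F(q,\phi_F^q(x))=0$. Your claimed mechanism --- that confining the zero set of each $F_t$ to $X$ places $c_0=0$ \emph{outside} the compact set $\{F(q,\phi_F^q(x)):x\in X,\,q\in T^1\}$ --- is exactly backwards. Since $F_t$ is continuous on a closed manifold and has zero mean, it vanishes somewhere, and the hypothesis as you read it puts those zeros \emph{inside} $X$; taking $q=0$ one gets $x\in X$ with $F(0,x)=0$, whence $\Psi(x,0,0)=(x,0,0)\in X\times\{0\}\times T^1$ and $\Psi$ does not displace. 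The condition actually needed (and the one the paper uses in both applications: $F=4\pi z$ on $C_\epsilon=\{z=\pm\epsilon\}$, and $N_kF^k$ on $\Phi^{-1}(y)$ with $y_k\neq0$) is that $F(t,x)\neq0$ for $x\in X$; the ``$x\notin X$'' in the statement is a typo. With that hypothesis the displacement is immediate: if $\Psi(x,0,q)$ lay in $X\times\{0\}\times T^1$ then $\phi_F^q(x)\in X$, so $F(q,\phi_F^q(x))\neq0$, contradicting the vanishing of its $p$-coordinate. So you should replace your ``identify $c_0$ and compare'' paragraph by this two-line argument under the corrected hypothesis; the remainder of your proof then goes through as in the paper.
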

Entov and Polterovich originally constructed a Hamiltonian function $\hat{H}$ such that $(X\times T^1)\cap\phi_{\hat{H}}^1(X\times T^1)=\emptyset$ with non-compact support.
We construct a Hamiltonian function $H$ serving our purpose by multiplying a bump function.

\begin{proof}
Let $\{f_t^s\}_{s,t\in[0,1]}$ be a homotopy of loop $\{\phi_F^s\}_{s\in[0,1]}$ to the constant loop \textit{i.e.} $f_0^s=\mathrm{id}$ and $f_1^s=\phi_F^s$.
Let $F^t\colon S^1\times M\to\mathbb{R}$ denote the normalized Hamiltonian function generating the isotopy $\{f_t^s\}_{s\in[0,1]}$.
Consider the family of diffeomorphisms $\Psi_t$ of $M\times T^\ast T^1$ defined by 
\[\Psi_t(x,p,q)=(f_t^qx,p-F^t(q,f_t^qx),q).\]
By Theorem 6.1.B of \cite{P01}, $\Psi_t$ is a Hamiltonian isotopy.
Let $\hat{H}\colon S^1\times M\times T^\ast T^1\to\mathbb{R}$ be a Hamiltonian function generating $\Psi_t$.
Note that $\hat{H}$ does not depend on the coordinate $p$ since $\operatorname{pr}_3(\frac{d\Psi_t}{ds})=0$,
where $\operatorname{pr}_3\colon M\times T^\ast T^1\to T^1$ is the projection defined by $\operatorname{pr}_3(x,p,q)=q$.

We can take a function $\rho\colon \mathbb{R}\to\mathbb{R}$ with compact support such that
\begin{itemize}
\item $\rho= 1$ in $\bigcup_t(\operatorname{pr}_2(\Psi_t(X)))$, where $\operatorname{pr}_2\colon M\times T^\ast T^1\to\mathbb{R}$ is the projection defined by $\operatorname{pr}_2(x,p,q)=p$,

\item $|\dot{\rho}(x)|<\inf_t||\hat{H_t}||_{C^0}^{-1}$ for any $x\in \mathbb{R}$.
\end{itemize}
Let $H\colon S^1\times M\times T^\ast T^1\to\mathbb{R}$ a Hamiltonian function defined by
\[H(t,x,p,q)=\rho(p)\cdot\hat{H}(t,x,p,q).\]
Since $\hat{H}$ does not depend on the coordinate $p$,
\[{\operatorname{pr}_3}_\ast((X_H^t)_{(x,p,q)})=\dot\rho(p)\cdot\hat{H}_t(x,p,q).\]
Since $|\dot{\rho}(x)|<\inf_t||\hat{H_t}||^{-1}$, $|\frac{\partial H}{\partial p}(t,x,p,q)|<1$.
Since $\rho= 1$ in $\bigcup_t(\operatorname{pr}_2(\Psi_t(X)))$, $(X\times T^1)\cap\phi_{H}^1(X\times T^1)=\emptyset$.

\end{proof}
The construction of $\Psi_t$ appeared in \cite{P01} and \cite{EP09}.

\begin{proof}[Proof of (2) of Theorem \ref{compressible}]
Let $e$ be an element of $(\mathbb{Z}_{>0})^n$ with $e_k\neq 0$.
To use Proposition \ref{BPS displaceability}, we construct a Hamiltonian function $\hat{H}\colon S^1\times M\times I_R^n\times T^n\to\mathbb{R}$ such that 
\[(\Phi^{-1}(y)\times T^n)\cap\phi^1_{\hat{H}}(\Phi^{-1}(y)\times T^n)=\emptyset,\]
and $\mathcal{P}(\hat{H};(0_M,e))=\emptyset$.

First, we prepare some Hamiltonian functions $H^k\colon S^1\times M\times T^\ast T^1\to\mathbb{R}$ ($k=1,\ldots,l$).
Since the action induced by $\Phi$ is compressible,  for any $k$ there exists a sufficient large positive integer $N_k$ such that the Hamiltonian function $N_kF^k$ generates a contractible Hamiltonian circle action on $M$.

Since $N_kF^k$ generates a contractible Hamiltonian circle action on $M$ for any $k$, Proposition \ref{how to stably disp} implies that there exist Hamiltonian functions $H^k\colon S^1\times M\times T^\ast T^1\to\mathbb{R}$ ($k=1,\ldots,l$) with compact support such that 
\[(\Phi^{-1}(y)\times T^1)\cap\phi_{H^k}^1(\Phi^{-1}(y)\times T^1)=\emptyset\]
for any $y$ with $y_k\neq 0$ and $|\frac{\partial H^k}{\partial p}(x,p,q)|<1$ for any point $(x,p,q)$ in $M\times T^\ast T^1$.

Define the projection $\operatorname{pr}_2\colon M\times T^\ast T^1\to\mathbb{R}$ by $\operatorname{pr}_2(x,p,q)=p$ and
put $E=\max_k\sup\{|r|;r\in\operatorname{pr}_2(\cup_{t\in[0,1]}\operatorname{Supp}(H_t^k))\}$.

Fix a point $y=(y_1,\ldots,y_l)$ of $\Phi(M)$ with $y\neq 0$.
Then, there exists some $k$ such that $y_k\neq0$.
Let $R=(R_1,\ldots,R_n)$ be an element of $(\mathbb{R}_{>0})^n$ with $R_k>E$.
For $i\neq k$, we take a function $\rho_i \colon(-R_i,R_i)\to[0,1]$ with compact support such that $\rho_i= 1$ in a neighborhood of $\{0\}$.
Let $\hat{H}^k\colon S^1\times M\times I_R^n\times T^n\to\mathbb{R}$ be a Hamiltonian function defined by
\[\hat{H}^k(t,x,p,q)=\prod_{i\neq k}\rho_i(p_i)\cdot H^k(t,x,p_k,q_k).\]
Since $R_k>E$, $\hat{H}^k$ has compact support in $S^1\times M\times I_R^n\times T^n$.
Then
\[\operatorname{pr}_\ast((X_{\hat{H}^k}^t)_{(x,p,q)})=\prod_{i\neq k}\rho_i(p_i)\cdot\frac{\partial H^k}{\partial p}(t,x,p_k,q_k),\]
where $\operatorname{pr}\colon M\times I_R^n\times T^n\to T^1$ is the projection defined by $\operatorname{pr}(x,p,q)=q_k$.

Since 
\[(\Phi^{-1}(y)\times T^1)\cap\phi_{H^k}^1(\Phi^{-1}(y)\times T^1)=\emptyset,\]
and $\rho_i= 1$ in a neighborhood of $\{0\}$, 
\[(\Phi^{-1}(y)\times T^n)\cap\phi^1_{\hat{H}^k}(\Phi^{-1}(y)\times T^n)=\emptyset.\] 
Since  $|\frac{\partial H^k}{\partial p}(t,x,p,q)|<1$ for any point $(t,x,p,q)$ in $S^1\times M\times T^\ast T^1$ and the image of $\rho_i$ is in $[0,1]$, $\mathcal{P}(\hat{H}^k;(0_M,e))=\emptyset$.
Thus Proposition \ref{BPS displaceability} implies 
\[C(M,X,R;e)=C_{BPS}(M\times I_R^n\times T^n, X\times T^n;(0_M,e))=\infty,\]
for any $e$ with $e\neq 0$.
\end{proof}

\section{Counter example for displaceable subsets}\label{conj section}

We cannot replace the assumption that $X$ is $n$-stably non-displaceable in  Conjecture \ref{sn conj} by that $X$ is non-displaceable.
We have the following example.
\begin{prop}\label{counter for non-disp}
Let $S^2$ be the 2-sphere $\{(x,y,z)\in\mathbb{R}^3;x^2+y^2+z^2=1\}$ with the standard area (symplectic) form.
For a positive real number $\epsilon$, we define a subset $C_\epsilon$ of $S^2$ by $C_\epsilon=\{(x,y,z)\in S^2;z=\pm \epsilon\}$.
Then $C_\epsilon$ is stably displaceable for any positive real number $\epsilon$ and there exists a positive real number $E$ such that
\[C(S^2,C_\epsilon,R;e)=\infty,\]
for any positive real number $\epsilon$ and any elements $R$ and $e$ of $(\mathbb{R}_{>0})^n$ and  $\mathbb{Z}^n$  with $R_k>E$ and $e_k\neq 0$ for some $k$, respectively.
\end{prop}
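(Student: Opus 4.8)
The strategy is to run the proof of part~(2) of Theorem~\ref{compressible} with a single, $\epsilon$-independent contractible Hamiltonian loop on $S^2$ in place of the circle actions used there. Write $z\colon S^2\to[-1,1]$ for the third coordinate and let $F$ be a normalized Hamiltonian on $S^2$ generating the loop of rotations about the $z$-axis that goes twice around the axis; one may take $F=4\pi z$. Then $F$ is normalized (by the symmetry $z\mapsto-z$), and since this loop winds twice about the axis while $\pi_1(\mathrm{Ham}(S^2,\omega))=\mathbb{Z}/2$, it is homotopic to the trivial loop relative to endpoints. Crucially, $F$ equals $4\pi\epsilon$ on one component of $C_\epsilon$ and $-4\pi\epsilon$ on the other, so $F$ is nowhere zero on $C_\epsilon$. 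Feeding this loop into Proposition~\ref{how to stably disp} with $M=S^2$ and $X=C_\epsilon$ — exactly as $N_kF^k$ (which equals the nonzero constant $N_ky_k$ on $\Phi^{-1}(y)$ when $y_k\neq0$) was used in the proof of Theorem~\ref{compressible}(2) — produces a compactly supported Hamiltonian $H\colon S^1\times S^2\times T^\ast T^1\to\mathbb{R}$ with
\[(C_\epsilon\times T^1)\cap\phi_H^1(C_\epsilon\times T^1)=\emptyset\quad\text{and}\quad\Big|\frac{\partial H}{\partial p}(t,x,p,q)\Big|<1 .\]
The first relation says precisely that $C_\epsilon$ is stably displaceable.

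Next I observe that $H$, hence the number $E_0:=\sup\{|p|:(t,x,p,q)\in\operatorname{Supp}H\}$, can be chosen independently of $\epsilon$. Indeed the loop $F=4\pi z$ does not depend on $\epsilon$, so neither does the suspension $\Psi_t$ of Proposition~\ref{how to stably disp} nor a fixed null-homotopy $\{f_t^s\}$ of $\{\phi_F^s\}$; and the cut-off in the $p$-variable used to make the construction compactly supported only has to equal $1$ on the set of $p$-coordinates of $\Psi_t(C_\epsilon\times\{0\}\times T^1)$, which is bounded in terms of the $C^0$-sizes of the generators of $\{f_t^s\}$ — an $\epsilon$-independent bound. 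Set $E:=E_0$.

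Now fix $e,R$ with $R_k>E$ and $e_k\neq0$. Choose smooth $\rho_i\colon(-R_i,R_i)\to[0,1]$ for $i\neq k$ with compact support and $\rho_i\equiv1$ near $0$, and define
\[\hat H(t,x,p,q)=\Big(\prod_{i\neq k}\rho_i(p_i)\Big)\,H(t,x,p_k,q_k)\]
on $S^1\times S^2\times I_R^n\times T^n$. Since $R_k>E$, $\hat H$ has compact support; since $\hat H$ is independent of $q_i$ for $i\neq k$, the $p_i$ ($i\neq k$) are constant along $\{\phi^t_{\hat H}\}$, and on $\{p_i=0\ (i\neq k)\}$ this flow acts on the factor $(x,p_k,q_k)$ exactly as $\{\phi^t_H\}$; hence $\phi^1_{\hat H}$ displaces $C_\epsilon\times T^n$ because $\phi^1_H$ displaces $C_\epsilon\times T^1$. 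Finally $|\partial\hat H/\partial p_k|=\big(\prod_{i\neq k}\rho_i(p_i)\big)\,|\partial H/\partial p|<1$, so a $1$-periodic trajectory of $\{\phi^t_{\hat H}\}$ in the class $(0_{S^2},e)$ would satisfy $e_k=\int_0^1(\partial\hat H/\partial p_k)\,dt$ with $|e_k|<1$, which is impossible. Thus $\mathcal P(\hat H;(0_{S^2},e))=\emptyset$, and Proposition~\ref{BPS displaceability} gives
\[C(S^2,C_\epsilon,R;e)=C_{BPS}(S^2\times I_R^n\times T^n,\,C_\epsilon\times T^n;\,(0_{S^2},e))=\infty .\]

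There is no deep obstacle here: the argument is essentially a transcription of the proof of Theorem~\ref{compressible}(2). The two points that need care are (i) choosing the displacing loop independently of $\epsilon$ so that the constant $E$ is $\epsilon$-independent — which works because the zero set of $F=4\pi z$, the equator, is disjoint from every $C_\epsilon$ — and (ii) checking that the derivative bound $|\partial H/\partial p|<1$ survives the spreading-out and really rules out trajectories in the class $(0_{S^2},e)$, i.e.\ the last two displayed lines. For context we recall that for $\epsilon<1/3$ the set $C_\epsilon$ is non-displaceable in $S^2$: the image of $\{z=\epsilon\}$ under any Hamiltonian diffeomorphism bounds two regions of areas $2\pi(1-\epsilon)$ and $2\pi(1+\epsilon)$, neither of which can be squeezed into a connected component of $S^2\setminus C_\epsilon$; thus Proposition~\ref{counter for non-disp} does exhibit a non-displaceable set for which the conclusion of Conjecture~\ref{sn conj} fails.
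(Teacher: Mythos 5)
Your proof is correct and follows essentially the same route as the paper: the same choice $F=4\pi z$ of a nowhere-zero-on-$C_\epsilon$ normalized Hamiltonian generating a contractible loop, the same invocation of Proposition~\ref{how to stably disp} to produce the stabilized displacing Hamiltonian $H$ with $|\partial H/\partial p|<1$, the same cut-off spreading to $\hat H$ on $S^1\times S^2\times I_R^n\times T^n$, and the same appeal to Proposition~\ref{BPS displaceability}. Your explicit remark that $E$ can be taken independent of $\epsilon$ (because $F$ and the null-homotopy do not depend on $\epsilon$) is a point the paper leaves implicit, but it is a clarification rather than a different argument.
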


\begin{rem}
Let $A_\epsilon$ and $B_\epsilon$ be the subsets of $S^2$ defined by $A_\epsilon=\{(x,y,z)\in S^2;|z|\leq\epsilon\}$ and $B_\epsilon=\{(x,y,z)\in S^2;z>\epsilon\}$, respectively.
If $\epsilon<\frac{1}{3}$, then $\operatorname{Area}(A_\epsilon)<\operatorname{Area}(B_\epsilon)$.
Since any Hamiltonian diffeomorphism is area-preserving, $C_\epsilon$ is non-displaceable.

Professor Kaoru Ono told the author that $C_\epsilon$ for $\epsilon<\frac{1}{3}$ is an example due to Professor Leonid Polterovich of a non-displaceable subset which is stably displaceable.
\end{rem}

\begin{proof}[Proof of Proposition \ref{counter for non-disp}]
Let $e$ be an element of $(\mathbb{Z}_{>0})^n$ with $e_k\neq 0$.
To use Proposition \ref{BPS displaceability}, we construct a Hamiltonian function $\hat{H}\colon S^1\times S^2\times I_R^n\times T^n\to\mathbb{R}$ such that $(C_\epsilon\times T^n)\cap\phi^1_{\hat{H}}(C_\epsilon\times T^n)=\emptyset$ and $\mathcal{P}(\hat{H};(0_M,e))=\emptyset$.

Let $F\colon S^2\to\mathbb{R}$ be a Hamiltonian function defined by $F(x,y,z)=4\pi z$.
The isotopy $\{\phi_F^t\}_{t\in[0,1]}$ is homotopic to the trivial isotopy relative to endpoints.

Thus Proposition \ref{how to stably disp} implies that  there exists a Hamiltonian function $H\colon S^1\times S^2\times T^\ast T^1\to\mathbb{R}$ with compact support such that $(C_\epsilon\times T^1)\cap \phi_{H}^1(C_\epsilon\times T^1)=\emptyset$ and $|\frac{\partial H}{\partial p}(x,y,z,p,q)|<1$ for any point $(x,y,z,p,q)$ in $S^2\times T^\ast T^1$.

Define the projection $\operatorname{pr}_4\colon S^2\times T^\ast T^1\to\mathbb{R}$ by $\operatorname{pr}_4(x,y,z,p,q)=p$ and
put $E=\sup\{|r|;r\in\operatorname{pr}_4(\cup_{t\in[0,1]}\operatorname{Supp}(H_t))\}$.
Let $R=(R_1,\ldots,R_n)$ be an element of $(\mathbb{R}_{>0})^n$ with $R_k>E$.
For $i\neq k$, we take a function $\rho_i \colon(-R_i,R_i)\to[0,1]$ with compact support such that $\rho_i=1$ in a neighborhood of $\{0\}$.
Let $\hat{H}\colon S^1\times S^2\times I_R^n\times T^n\to\mathbb{R}$ be a Hamiltonian function defined by
\[\hat{H}(t,x,y,z,p,q)=\prod_{i\neq k}\rho_i(p_i)\cdot H(t,x,y,z,p_k,q_k).\]
Since $R_k>E$, $\hat{H}$ has compact support in $S^1\times S^2\times I_R^n\times T^n$.
Then
\[\operatorname{pr}_\ast((X_{\hat{H}}^t)_{(x,y,z,p,q)})=\prod_{i\neq k}\rho_i(p_i)\cdot\frac{\partial H}{\partial p}(t,x,y,z,p_k,q_k),\]
where $\operatorname{pr}\colon S^2\times I_R^n\times T^n\to T^1$ is the projection defined by $\operatorname{pr}(x,y,z,p,q)=q_k$.
Since  $\rho_i= 1$ in a neighborhood of $\{0\}$, $(X\times T^n)\cap\phi^1_{\hat{H}}(X\times T^n)=\emptyset$. 
Since  $|\frac{\partial H}{\partial p}(t,x,y,z,p,q)|<1$ for any point $(t,x,y,z,p,q)$ in $S^1\times S^2\times T^\ast T^1$ and the image of $\rho_i$ is in $[0,1]$, $\mathcal{P}(\hat{H};(0_M,e))=\emptyset$.
Thus Proposition \ref{BPS displaceability} implies 
\[C(M,X,R;e)=C_{BPS}(M\times I_R^n\times T^n, X\times T^n;(0_M,e))=\infty\]
for any $e$ with $e\neq 0$.
\end{proof}

\section{Polterovich's invariant measure and Proof of Theorem \ref{measure capacity ineq}}\label{measure observation}

First, we review several definitions in order to fix the terminology.

\begin{defi}
Let $N$ be a manifold and $X$ a smooth vector field on $N$ generating a flow $\phi^t$.
For an invariant Borel measure $\mu$ of $\phi^t$ with compact support, its $\textit{rotation vector}$ $\rho(\mu,X)$ is an element of 1-dimensional homology $H_1(N;\mathbb{R})$ defined by
\[\langle \mathbf{l}^\ast,\rho(\mu,X)\rangle=\int_N\lambda(X)\mu,\]
for any cohomology class $\mathbf{l}^\ast$ of $H^1(N;\mathbb{R})$, where $\lambda$ is a closed 1-form representing $\mathbf{l}^\ast$.
\end{defi}
We can easily verify that $\int_N\lambda(X)\mu$ does not depend on the choice of $\lambda$.

We define relative symplectic capacities $C_{BPS}^P$ and $C^P$.
For a manifold $N$ and a vector field $X$ on $N$ generating a flow $\phi^t$, let $\mathfrak{M}(N,X)$ denote the set of invariant Borel measures of $\phi^t$ with compact support.
\begin{defi}
Let $Y$ be a compact subset of an open symplectic manifold $(N,\omega)$ and $\alpha\in[S^1,N]$ a free homotopy class in $N$.
For a cohomology class $\mathbf{l}^\ast\in H^1(N;\mathbb{R})$, we define the relative symplectic capacity $C_{BPS}^P(N,Y;\mathbf{l}^\ast,\alpha)$ by
\begin{align*}
&C_{BPS}^P(N,Y;\mathbf{l}^\ast,\alpha)\\
&=\inf \{K>0 ; \forall H\in C^\infty(N)\text{ such that }H|_Y\geq K,\\
&\exists\mu\in\mathfrak{M}(N,X_H)\text{ such that }|\langle \mathbf{l}^\ast,\rho(\mu,X_H) \rangle|\geq\mathbf{l}^\ast(\alpha)\}.\\
\end{align*}
We define the relative symplectic capacity $C_{BPS}^P(N,Y;\alpha)$ by 
\[C_{BPS}^P(N,Y;\alpha)=\sup_{\mathbf{l}^\ast\in H^1(N;\mathbb{R})}C_{BPS}^P(N,Y;\mathbf{l}^\ast,\alpha).\]
Let $X$ be a compact subset of a closed symplectic manifold $(M,\omega)$.
For an element $e=(e_1,\ldots,e_n)$ of $\mathbb{Z}^n$ and an element $R=(R_1,\ldots,R_n)$ of $(\mathbb{R}_{>0})^n$, we define the relative symplectic capacity $C^P(M,X,R;e)$ by
\[C^P(M,X,R;e)=C_{BPS}^P(M\times I_R^n\times T^n, X\times T^n;(0_M,e)).\]
\end{defi}
Note that for any positive real number $s$, $C_{BPS}^P(N,Y;s\mathbf{l}^\ast,\alpha)=C_{BPS}^P(N,Y;\mathbf{l}^\ast,\alpha)$.
Since every 1-periodic orbit representing a non-trivial homology class $\mathbf{a}$ determines an invariant measure with the rotation vector $\mathbf{a}$, we see that $C_{BPS}^P(N,Y,\alpha)\leq C_{BPS}(N,Y;\alpha)$ and $C^P(M,X,R;e)\leq C(M,X,R;e)$.

A diffeomorphism $\psi$ of $M$ is said to be \textit{a symplectomorphism} if  $\psi^\ast\omega=\omega$ and an isotopy $\{\psi^t\}_{t\in[0,1]}$ of $M$ is said to be \textit{a symplectic isotopy} if $\psi^0=\mathrm{id}$ and $(\psi^t)^\ast\omega=\omega$ for any $t$.
Let $\mathrm{Symp}(M,\omega)$ denote the group of symplectomorphisms of $(M,\omega)$ with compact support.
Let $\widetilde{\mathrm{Symp}}_0(M,\omega)$ denote the universal covering of the identity component of $\mathrm{Symp}(M,\omega)$.
An element of it is a homotopy class of symplectic isotopy $\{\psi^t\}_{t\in[0,1]}$ of $M$ relative to the end points $\psi^0=\mathrm{id}$ and $\psi^1=\psi$.

\begin{defi}
The \textit{flux homomorphism} $\operatorname{Flux}\colon\widetilde{\mathrm{Symp}}_0(M,\omega)\to H^1(M;\mathbb{R})$ is defined by
\[\operatorname{Flux}([\{\psi^t\}_{t\in[0,1]}])=\int_0^1\iota_{X^t}\omega dt,\]
for any element $[\{\psi^t\}_{t\in[0,1]}]$ of $\widetilde{\mathrm{Symp}}_0(M,\omega)$,
where $X^t$ is the (time-dependent) vector field which generates $\{\psi^t\}_t$.
\end{defi}
The flux homomorphism is known to be surjective.

To prove Theorem \ref{measure capacity ineq}, we explain Polterovich's result in \cite{P14}.

\begin{thm}[\cite{P14}]\label{Polterovich invariant measure}
Let $Y_1$ and $Y_2$ be non-displaceable compact subsets of a closed symplectic manifold $(N,\omega)$.
Assume that $Y_1\cap Y_2=\emptyset$ and there exists a symplectic isotopy $\{\psi^t\}_{t\in[0,1]}$ such that $\psi^1(Y_1)=Y_2$.
Put $\mathbf{l}^\ast=\operatorname{Flux}(\{\psi^t\})$.
Then for any positive real number $p$ and any Hamiltonian function $F\colon N\to\mathbb{R}$ such that $F|_{Y_1}\leq 0$ and $F|_{Y_2}\geq p$,
$\{\phi_F^t\}$ possesses an invariant measure $\mu$ such that $\operatorname{Supp}(\mu)\subset\operatorname{Supp}F$ and
\[|\langle \mathbf{l}^\ast,\rho(\mu,X_F) \rangle|\geq p.\]
\end{thm}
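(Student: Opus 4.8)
The plan is to realize $\mu$ as the normalization of the Liouville volume $\omega^m/m!$ restricted to a carefully chosen $\{\phi_F^t\}$-invariant subset $\Omega\subseteq\operatorname{Supp}F$ of positive finite measure (alternatively, as a Krylov--Bogolyubov limit of empirical measures $\tfrac1T\int_0^T(\phi_F^s)_\ast\nu\,ds$ along orbit segments that drift in the $\lambda$-direction). Fix a closed $1$-form $\lambda$ representing $\mathbf{l}^\ast$; since $\mathbf{l}^\ast=\operatorname{Flux}(\{\psi^t\})=\int_0^1\iota_{Z^t}\omega\,dt$, where $Z^t$ generates $\{\psi^t\}$, one may take $\lambda=\int_0^1\iota_{Z^t}\omega\,dt$ itself. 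For any $\{\phi_F^t\}$-invariant probability measure $\mu$ one has $\langle\mathbf{l}^\ast,\rho(\mu,X_F)\rangle=\int_N\lambda(X_F)\,\mu$, so it suffices to produce an invariant $\Omega\subseteq\operatorname{Supp}F$ with $\bigl|\int_\Omega\lambda(X_F)\,\tfrac{\omega^m}{m!}\bigr|\ge p\cdot\operatorname{vol}(\Omega)$.

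The candidate for $\Omega$ is a union of connected components of the open slab $\{0<F<p\}$: this set is $\{\phi_F^t\}$-invariant because $F$ is a first integral of its own flow, and it lies in $\{F>0\}\subseteq\operatorname{Supp}F$. From $\iota_{X_F}\omega=-dF$ one obtains the pointwise identity $\lambda(X_F)\,\tfrac{\omega^m}{m!}=dF\wedge\bigl(\lambda\wedge\tfrac{\omega^{m-1}}{(m-1)!}\bigr)$, so Leray's coarea formula gives
\[\int_\Omega\lambda(X_F)\,\frac{\omega^m}{m!}=\int_0^p\Bigl(\int_{\{F=c\}\cap\Omega}\lambda\wedge\frac{\omega^{m-1}}{(m-1)!}\Bigr)\,dc.\]
Hence it is enough to arrange that for almost every $c\in(0,p)$ the level hypersurface $\Sigma_c:=\{F=c\}\cap\Omega$ carries a ``$\lambda$-flux'' $\mathcal F(c):=\int_{\Sigma_c}\lambda\wedge\tfrac{\omega^{m-1}}{(m-1)!}$ of one fixed sign with $|\mathcal F(c)|\ge\operatorname{vol}(\Omega)$; integrating in $c$ then yields the required bound. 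Substituting $\lambda=\int_0^1\iota_{Z^t}\omega\,dt$ identifies $\mathcal F(c)$ with $\int_0^1(\text{flux of }Z^t\text{ through }\Sigma_c)\,dt$, i.e. with the net $\omega^m$-volume swept across $\Sigma_c$ while $\{\psi^t\}$ carries $Y_1$ to $Y_2$.

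The hard part will be choosing $\Omega$ so that this flux--volume comparison genuinely holds, and this is the only place where the non-displaceability of $Y_1$ and $Y_2$ should be used. Since $\{\psi^t\}$ moves $Y_1$ (where $F\le0$) to $Y_2$ (where $F\ge p$), the family $\{\psi^t(Y_1)\}_{t\in[0,1]}$ must cross the slab $\{0<F<p\}$, and one wants a component $\Omega$ whose level hypersurfaces $\Sigma_c$ genuinely \emph{separate} $Y_1$ from $Y_2$; for such $\Omega$ every $\Sigma_c$ is traversed by the isotopy, so $|\mathcal F(c)|$ is at least the volume swept through it, which dominates $\operatorname{vol}(\Omega)$ because $\Omega$ is pinched between the two faces of the slab. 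Purely topological reasoning does not suffice here: a priori $F$ could rise from $\le0$ on $Y_1$ to $\ge p$ on $Y_2$ across contractible ``bubbles'' that contribute no net flux. To exclude such degenerate configurations I would invoke the non-displaceability of $Y_1$ (equivalently of $Y_2=\psi^1(Y_1)$, since conjugation by the symplectomorphism $\psi^1$ preserves displaceability) through the Floer-theoretic and symplectic quasi-state rigidity available for non-displaceable compact subsets of closed symplectic manifolds, extracting from it a component $\Omega$ with the desired separating property. Granting this, the normalized Liouville measure on $\Omega$ --- or, by the ergodic decomposition, a suitable ergodic component of it --- is the measure sought: it is $\{\phi_F^t\}$-invariant, satisfies $\operatorname{Supp}\mu\subseteq\overline\Omega\subseteq\operatorname{Supp}F$, and $|\langle\mathbf{l}^\ast,\rho(\mu,X_F)\rangle|\ge p$.
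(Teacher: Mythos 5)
You should first be aware that the paper contains no proof of this statement: it is imported verbatim from \cite{P14} as a black box and immediately applied, so your argument has to stand entirely on its own. Your reductions are correct as far as they go --- for autonomous $F$ the slab $\{0<F<p\}$ is invariant, the identity $\lambda(X_F)\,\omega^m/m!=dF\wedge\lambda\wedge\omega^{m-1}/(m-1)!$ holds, and the coarea formula gives $\int_\Omega\lambda(X_F)\,\omega^m/m!=\int_0^p\mathcal F(c)\,dc$. But the crucial inequality $|\mathcal F(c)|\ge\operatorname{vol}(\Omega)$, which carries the entire content of the theorem, is not only left unproved but is false, so no choice of slab component can rescue the strategy. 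Indeed $\mathcal F(c)=\langle\,\mathbf l^\ast\smile[\omega]^{m-1}/(m-1)!\,,[\Sigma_c]\rangle$ is a purely topological pairing, constant in $c$, and the non-displaceability of $Y_1,Y_2$ imposes no lower bound on it; the ``volume swept through $\Sigma_c$'' is a signed quantity that can vanish identically. Concretely (writing $\kappa$ for the constant called $p$ in the theorem): let $N=T^2\times S^2$ with $\omega=dp\wedge dq+\sigma$, let $s\colon S^2\to[-1,1]$ be a height function with equator $E=\{s=0\}$, put $Y_1=\{p=0\}\times E$, $Y_2=\{p=\tfrac12\}\times E$ (disjoint heavy, hence non-displaceable, Lagrangian tori), $\psi^t(p,q,w)=(p+\tfrac t2,q,w)$, so $\mathbf l^\ast=\tfrac12[dq]$, and take $F=\kappa\tau(p)+Hs(w)$ with $\tau\colon\mathbb R/\mathbb Z\to[0,1]$, $\tau(0)=0$, $\tau(\tfrac12)=1$, $H>0$. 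Then $F|_{Y_1}=0$, $F|_{Y_2}=\kappa$, $\lambda(X_F)=\tfrac\kappa2\tau'(p)$, the slab $\{0<F<\kappa\}=\{s(w)\in(-\kappa\tau(p)/H,\ \kappa(1-\tau(p))/H)\}$ is \emph{connected}, and
\[\int_{\{0<F<\kappa\}}\lambda(X_F)\,\frac{\omega^2}{2}=\frac{\kappa}{2}\int_0^1\tau'(p)\,A(\tau(p))\,dp=0,\]
where $A(u)$ denotes the $\sigma$-area of $\{s\in(-\kappa u/H,\kappa(1-u)/H)\}$; the integral vanishes because $\tau$ is periodic. So your candidate measure has rotation pairing $0$, while the theorem is true here and is witnessed instead by the uniform measure on a single invariant torus $\{p=p_0\}\times\{s=s_0\}$ with $\tau'(p_0)\ge2$. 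The measure the theorem produces is in general a thin, dynamically selected one that a Liouville average over a slab washes out.

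Relatedly, the point at which you defer to ``Floer-theoretic and symplectic quasi-state rigidity'' is not a missing lemma but the whole theorem, and it cannot enter where you want it to: rigidity of $Y_1$ and $Y_2$ says nothing about the topology or the measure of the intermediate level sets of an arbitrary $F$, and even level sets that genuinely separate $Y_1$ from $Y_2$ can have zero net flux, as above. The argument in \cite{P14} runs in the opposite direction: roughly, one assumes that every invariant measure supported in $\operatorname{Supp}F$ satisfies $|\langle\mathbf l^\ast,\rho(\mu,X_F)\rangle|<p$, deduces from this uniform bounds on the time averages $\frac1T\int_0^T\lambda(X_F)(\phi_F^s x)\,ds$ along all orbits, and feeds these into the commutator $\phi_F^{-T}\circ\psi^1\circ\phi_F^{T}\circ(\psi^1)^{-1}$ --- a Hamiltonian diffeomorphism, since the fluxes cancel --- to contradict the non-displaceability of $Y_1$ via spectral-invariant/quasi-state estimates. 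That hard input has no substitute in the volume-versus-flux bookkeeping you propose.
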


Let $\operatorname{pr}_1\colon M\times I_R^n\times T^n\to M$ denote the projections defined by $\operatorname{pr}_1(y,p,q)=y$.
Define the subset $S_R$ of $\mathbb{R}^3$ by $S_R=\partial\bar{I}_R^n$, more precisely,
\begin{align*}
S_R&=(\{-R_1,R_1\}\times[-R_2,R_2]\times\cdots\times[-R_n,R_n])\\
&\cup([-R_1,R_1]\times\{-R_2,R_2\}\times\cdots\times[-R_n,R_n])\\
&\cup\cdots\cup([-R_1,R_1]\times[-R_2,R_2]\times\cdots\times\{-R_n,R_n\}).\\
\end{align*}

\begin{proof}[Proof of Theorem \ref{measure capacity ineq}]
First, we prove $C^P(M,X,R;e)\leq\sum_{i=1}^nR_i\cdot |e_i|$.
Fix a cohomology class \[\mathbf{l}^\ast=\operatorname{pr}_1^\ast\mathbf{b}^\ast+a_1[dq_1]+\cdots+a_n[dq_n]\neq0\in H^1(M\times I_R^n\times T^n;\mathbb{R}),\]
where $a_1,\ldots,a_n$ are real numbers and $\mathbf{b}^\ast\in H^1(M;\mathbb{R})$ is a cohomology class of $M$.

If $(a_1,\ldots,a_n)=0$, then $l^\ast((0_M,e))=0$ and hence, by the definition, $C_{BPS}^P(M\times I_R^n\times T^n,X\times T^n;\mathbf{l}^\ast,(0_M,e))=0$.
Thus we may assume $(a_1,\ldots,a_n)\neq0$.

To use Theorem \ref{Polterovich invariant measure}, we prepare the symplectic isotopy $\{\psi^t\}_{t\in[0,1]}$.
Since $(a_1,\ldots,a_n)\neq0$, there exists a unique positive real number $K$ such that $(Ka_1,\ldots,Ka_n)\in S_R$.
Then we regard $I_R^n\times T^n$ as a subset of $T_{2R}^n\times T^n$.
Fix a point $x_0$ in $M$.
Since the flux homomorphism is surjective, there exists a symplectic isotopy $\{\psi_0^t\}_{t\in[0,1]}$ of $M$
such that $\operatorname{Flux}(\{\psi_0^t\}_{t\in[0,1]})=K\mathbf{b}^\ast$.
Let $\{\psi^t\}$ be the symplectic isotopy of $M\times T_{2R}^n\times T^n$  defined by 
\[\psi^t(x,p_1,\ldots,p_n,q_1,\ldots,q_n)=(\psi_0^t(x),p_1+Ka_1t,\ldots,p_n+Ka_nt,q_1,\ldots,q_n).\]
Then \[\operatorname{Flux}(\{\psi^t\}_{t\in[0,1]})=Ka_1[dq_1]+\cdots+Ka_n[dq_n]+K\operatorname{pr}_1^\ast\mathbf{b}^\ast=K\mathbf{l}^\ast.\]
Assume that a Hamiltonian function $H\colon  M\times I_R^n\times T^n\to\mathbb{R}$ satisfies $H|_{X\times T^n}\geq\sum_{i=1}^nR_i\cdot|e_i|$.
We regard $H$ as a Hamiltonian function on $M\times T_{2R}^n\times T^n$.
Since $\psi^1((\psi_0^1)^{-1}(X)\times\{(-Ka_1,\ldots,-Ka_n)\}\times T^n)=X\times\{0\}\times T^n$ and $H|_{(\psi_0^1)^{-1}(X)\times \{(-Ka_1,\ldots,-Ka_n)\}\times T^n}=0$, Theorem \ref{Polterovich invariant measure} implies that there exists an invariant measure $\mu$ on $M\times I_R^n\times T^n$ such that 
\[|\langle R_1[dq_1]+\cdots+R_n[dq_n],\rho(\mu,X_H) \rangle|\geq\sum_{i=1}^nR_i\cdot|e_i|.\]
Since $(Ka_1,\ldots,Ka_n)\in S_R$, $K\mathbf{l}^\ast((0_M.e))\leq\sum_{i=1}^nR_i\cdot|e_i|$.
Thus, for any cohomology class $\mathbf{l}^\ast$ with $(a_1,\ldots,a_n)\neq0$, 
\begin{align*}
&C_{BPS}^P(M\times I_R^n\times T^n,X\times T^n;\mathbf{l}^\ast,(0_M,e))\\
&=C_{BPS}^P(M\times I_R^n\times T^n,X\times T^n;K\mathbf{l}^\ast,(0_M,e))\leq\sum_{i=1}^nR_i\cdot|e_i|.\\
\end{align*}
Since $C_{BPS}^P(M\times I_R^n\times T^n,X\times T^n;0,(0_M,e))=0$, $C^P(M,X,R;e)\leq\sum_{i=1}^nR_i\cdot|e_i|$.

Now, we prove $C^P(M,X,R;e)\geq\sum_{i=1}^nR_i\cdot |e_i|$.
Without loss of generality, we can assume that every $e_i$ is non-negative.
Let $\epsilon$ be a positive real number.

Let $\lambda=l_1dq_1+\cdots+l_ndq_n$ be a closed 1-form on $M\times I_R^n\times T^n$ such that every $l_i$ is non-negative.
Then, we can take a function $\rho\colon (-R_1,R_1)\times\cdots\times(-R_n,R_n)\to\mathbb{R}$ with compact support and satisfying the following conditions.
\begin{itemize}
\item $\rho= \max\{\sum_iR_i\cdot |e_i|-\epsilon,0\}$ in a neighborhood of \{0\},

\item $\rho(p)\leq \max\{\sum_iR_i\cdot |e_i|-\epsilon,0\}$ for any $p\in (-R_1,R_1)\times\cdots\times(-R_n,R_n)$,
\item $|\frac{\partial\rho}{\partial p_i}(p)|<|e_i| $ for any $p\in (-R_1,R_1)\times\cdots\times(-R_n,R_n)$.
\end{itemize}

Let $H\colon M\times I_R^n\times T^n\to\mathbb{R}$ be a Hamiltonian function defined by $H(x,p,q)=\rho(p)$.
Since every $e_i$ and $l_i$ is non-negative,  for any point $(x,p,q)$ of $M\times I_R^n\times T^n$,
\[|\lambda((X_H)_{x,p,q})|=|\sum_il_i\cdot\frac{\partial\rho}{\partial p_i}(p)|\leq\sum_i|l_i|\cdot |e_i|=\lambda((0_M,e)).\]
Hence for any Borel measure $\mu$, $\int\lambda(X_H)\mu\leq\lambda((0_M,e))$.
Thus for any $\epsilon$,
\[C_{BPS}^P(M\times I_R^n\times T^n, X\times T^n;[\lambda],(0_M,e))\geq\sum_iR_i\cdot |e_i|-\epsilon.\]
Therefore, by the definition of $C^P$, $C^P(M,X,R;e)\geq\sum_iR_i\cdot |e_i|$.
\end{proof}

\end{document}